\newtheorem{thm}{Theorem}[section]
\newtheorem{lem}[thm]{Lemma}
\newtheorem{cor}[thm]{Corollary}
\theoremstyle{definition}
\theoremstyle{remark}
\def\titlerunning#1{\gdef\titrun{#1}}
\def\author#1{\gdef\autrun{\def\and{\unskip, }#1}\gdef\@author{#1}}
\def\address#1{{\def\and{\\\hspace*{18pt}}\renewcommand{\thefootnote}{}%
\footnote {#1}}%
\markboth{\autrun}{\titrun}}
\def\email#1{e-mail: #1}
\def\subjclass#1{{\renewcommand{\thefootnote}{}%
\footnote{\emph{Mathematics Subject Classification (2010):} #1}}}
\def\keywords#1{\par\medskip
\noindent\textbf{Keywords.} #1}
\theoremstyle{definition}
\newtheorem{rem}[thm]{Remark}
\numberwithin{equation}{section}
\newcommand{\R}{\mathbf{R}}
\newcommand{\C}{\mathbf{C}}
\newcommand{\Z}{\mathbf{Z}}
\newcommand{\Q}{\mathbf{Q}}
\newcommand{\HH}{\mathbf{H}}
\newcommand{\PSL}{\mathbf{PSL}}
\newcommand{\kk}{\mathbf{k}}
\newcommand{\J}{\mathbf{J}}
\newcommand{\K}{\mathbf{K}}
\newcommand{\Mod}[1]{\ (\textup{mod}\ #1)}
\providecommand{\Li}{\operatorname{Li}}
\begin{document}


\baselineskip=17pt


\titlerunning{Prime Geodesic Theorem for the Picard manifold}

\title{Prime Geodesic Theorem for the Picard manifold}

\author{Olga Balkanova
\and 
Dmitry Frolenkov}

\date{}

\maketitle

\address{O. Balkanova: Department of Mathematical Sciences, Chalmers University of Technology and University of Gothenburg,  Chalmers tv\"{a}rgata 3,  Gothenburg 412 96,
   Sweden; \email{olgabalkanova@gmail.com}
\and
D. Frolenkov: Steklov Mathematical Institute of Russian Academy of
Sciences, 8 Gubkina st., Moscow, 119991, Russia; \email{frolenkov@mi.ras.ru}}

\subjclass{Primary: 11F72; Secondary: 11L05, 11M06}

\begin{abstract}
Let $\Gamma=\PSL(2,\Z[i])$ be the Picard group and $\HH^3$ be the three-dimensional hyperbolic space.
We study the Prime Geodesic Theorem for the quotient $\Gamma  \setminus \HH^3$, called the Picard manifold, obtaining an error term of size $O(X^{3/2+\theta/2+\epsilon})$, where $\theta$ denotes a subconvexity exponent for quadratic Dirichlet $L$-functions defined over Gaussian integers.

\keywords{spectral exponential sum;  prime geodesic theorem; Picard manifold.}
\end{abstract}

\section{Introduction}
The classical Prime Geodesic Theorem states that the counting function $\pi(X)$ of primitive hyperbolic classes in $\PSL_2(\Z)$ whose norm does not exceed $X$ satisfies the asymptotic law
$$\pi(X)\sim \Li(X)\text{ as }X \rightarrow \infty,$$
where $\Li(X)$ is the logarithmic integral.
This theorem and its generalisations can be considered as geometric analogues of the Prime Number Theorem, while norms of primitive hyperbolic elements are sometimes called "pseudoprimes".

In this paper we study the three dimensional version of the Prime Geodesic Theorem. Different concepts of the two dimensional theory can be extended to this case in a natural and elegant way. The role of the Poincar\'{e} upper half plane $\HH^2$ is now played by the three dimensional hyperbolic space
\begin{equation}\label{3dspace}
\HH^3=\{(z,r); \quad z=x+iy \in \C; \quad r>0\}.
\end{equation}
Let $\Gamma \subset \PSL (2,\C)$ be a discrete cofinite group. Prime Geodesic Theorem for the hyperbolic manifold $\Gamma \setminus \HH^3$ provides an asymptotic formula for the function $\pi_{\Gamma}(X)$, which counts the number of primitive hyperbolic or loxodromic elements in $\Gamma$ with norm less than or equal to $X$.

In the pioneering paper \cite{Sarnak}, Sarnak proved an asymptotic formula for $\pi_{\Gamma}(X)$ with an error term of size 
\begin{equation}\label{error:Sarnak}
O(X^{3/2+1/6+\epsilon}).
\end{equation}

Further progress has been made in the case of the Picard group $\Gamma=\PSL(2,\Z[i])$ defined over Gaussian integers
$$\Z[i]=\{a+bi; \quad a,b \in \Z\}.$$

Assuming the "mean Lindel\"{o}f" hypothesis for symmetric square $L$-functions attached to Maass forms on the Picard manifold $\Gamma  \setminus \HH^3$, Koyama \cite{Koyama} obtained  an error term of size
\begin{equation}\label{est:koyama}
O(X^{3/2+1/14+\epsilon}).
\end{equation}

Finally, an unconditional improvement of Sarnak's estimate \eqref{error:Sarnak}, namely 
\begin{equation}\label{error:CCL}
O(X^{3/2+1/8+\epsilon}),
\end{equation} 
 was derived in  \cite{ChatCherLaak} as a consequence of a non-trivial estimate for the second moment of symmetric-square $L$-functions.

Proofs of the last two results are centered around Nakasuji's explicit formula proved in \cite[Theorem 4.1]{Nak} and \cite[Theorem 5.2]{Nak2}. Let us denote the remainder term in the prime geodesic theorem by $E_{\Gamma}(X)$.
Nakasuji showed that for $1\le T\le X^{1/2}$ we have
\begin{equation}\label{PrimeGeodesic to spec.sum}
E_{\Gamma}(X)=2\Re\left(\sum_{0<r_j\le T}\frac{X^{1+ir_j}}{1+ir_j}\right)+O\left(\frac{X^2}{T}\log X\right),
\end{equation}
where $\lambda_j=1+r_j^2$ are the eigenvalues of the hyperbolic Laplacian on $L^2(\Gamma  \setminus \HH^3)$.
Explicit formula  \eqref{PrimeGeodesic to spec.sum}  brings into play the spectral exponential sum
\begin{equation}\label{spec.exp.sum}
S(T,X)=\sum_{0<r_j\le T}X^{ir_j}.
\end{equation}

The trivial bound $$S(T,X)\ll T^{3}$$ follows from Weyl's law and yields Sarnak's result \eqref{error:Sarnak}. 

The proof of \eqref{error:CCL} is based on the following improvement (see \cite[Theorem 3.2]{ChatCherLaak})
\begin{equation}\label{CCL estimate on spec.sum}
S(T,X)\ll T^2X^{1/4}(TX)^{\epsilon}.
\end{equation}


The aim of this paper is to obtain a  new upper bound for the spectral exponential sum, which explicitly depends on  a subconvexity exponent $\theta$ for quadratic Dirichlet $L$-functions defined over Gaussian integers.
\begin{thm}\label{thm:spec.exp.sum new bound} For $1\le T\le X^{1/2}$ the following estimate holds
\begin{equation}\label{spec.exp.sum new bound}
S(T,X)\ll
X^{1/2+\theta/2}T(TX)^{\epsilon}.
\end{equation}
\end{thm}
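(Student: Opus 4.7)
The plan is to reduce the spectral exponential sum $S(T,X)$ to a sum of central values of quadratic Hecke $L$-functions over $\Z[i]$, and then insert the subconvexity exponent $\theta$ termwise.

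First I would replace the sharp cutoff $r_j \le T$ by a smooth test function $h(r)$ localized on a dyadic scale $r \sim T_0 \le T$, reducing the problem to bounding $\sum_j h(r_j) X^{ir_j}$ for each such $T_0$ and then summing dyadically in $T_0$. Next I would apply a trace formula for $\Gamma = \PSL(2,\Z[i])$ --- either the Kuznetsov formula (which converts the weighted spectral sum into a Bessel-weighted double sum of Gaussian-integer Kloosterman sums) or the Selberg formula (which converts it into a sum over primitive hyperbolic and loxodromic conjugacy classes). In either case the arithmetic side is naturally parametrized by non-square discriminants $d \in \Z[i]$, corresponding to the quadratic extensions $\Q(i)(\sqrt{d})/\Q(i)$ containing the traces of the hyperbolic elements.

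The bridge to $L$-functions is provided by the analytic class number formula over $\Q(i)$ (on the Selberg side) or by Gauss-sum factorizations of Kloosterman sums (on the Kuznetsov side). After a Mellin transform against the smooth weight, the sum takes the shape
\[
\sum_j h(r_j) X^{ir_j} \;\approx\; \sum_{d} \int L\!\left(\tfrac{1}{2} + it,\, \chi_d\right) W(d, t;\, T, X)\, dt \;+\; (\text{lower order}),
\]
where $\chi_d$ is the quadratic Hecke character of $\Q(i)$ attached to $\Q(i)(\sqrt{d})/\Q(i)$ and $W$ is an explicit analytic kernel localizing $d$ and $t$ in ranges dictated by $T$ and $X$. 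The oscillation in $X^{ir_j}$ is felt on the arithmetic side through stationary phase applied to the Bessel or Selberg transform of $h$, and this is what produces the factor $X^{1/2}$ (rather than $X$) in the eventual bound.

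Finally I would apply the pointwise subconvexity estimate $L(\tfrac12+it, \chi_d) \ll (N(d)(1+|t|))^{\theta+\epsilon}$ termwise and bound the remaining double sum against $W$; a careful balancing of the lengths of the $d$-sum and the $t$-integral against the localization scales of $W$ should yield $S(T,X) \ll X^{1/2+\theta/2} T (TX)^\epsilon$. The main obstacle is the middle step: cleanly reducing the arithmetic side of the trace formula to the displayed shape with an explicit, well-behaved kernel $W$ requires handling non-maximal orders in quadratic extensions of $\Q(i)$ and a sufficiently sharp stationary-phase analysis of the transforms. Only with such a clean reduction can the pointwise subconvex input $\theta$ be exploited at full strength without paying extra factors of $T$ --- and this is precisely the distinction from the second-moment-based estimate \eqref{CCL estimate on spec.sum}.
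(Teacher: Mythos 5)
Your proposal correctly identifies the target arithmetic objects (central values of quadratic Hecke $L$-functions over $\Z[i]$, inserted termwise with the subconvexity exponent $\theta$) and the correct heuristic for the $X^{1/2}$ saving (stationary phase in the Bessel transform localizing the relevant discriminants near $|n|\asymp X^{1/2}$). However, there is a genuine gap exactly at the step you flag as "the main obstacle," and neither of the two bridges you propose can close it. A direct application of the Kuznetsov formula to $\sum_j h(r_j)X^{ir_j}$ is not possible: the spectral side of \eqref{Kuznetsov formula} carries the weights $\overline{\rho_j(m)}\rho_j(n)$, and for fixed $m=n$ the arithmetic side involves a single sum $\sum_q S(n,n;q)|q|^{-2}\check h(\cdot)$, hence (after \eqref{eq:sumofklsums}) a single Zagier-type $L$-value, not a family of discriminants over which $\theta$ can be exploited. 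On the Selberg side, the class number formula produces $L(1,\chi_d)$ -- values at the edge of the critical strip, where subconvexity in the discriminant aspect is vacuous -- and Kloosterman sums over $\Z[i]$ are not Sali\'e sums, so they admit no Gauss-sum factorization. Thus the displayed shape $\sum_d\int L(\tfrac12+it,\chi_d)W\,dt$ is not reachable by either route you name.

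The missing idea is the intermediate object that the paper makes central: the twisted first moment of Rankin--Selberg $L$-functions on the critical line, $\sum_j \frac{r_j}{\sinh(\pi r_j)}\omega_T(r_j)X^{ir_j}L(u_j\otimes u_j,\tfrac12+it)$ (Theorem \ref{thm:momentoscill}). The factor $L(u_j\otimes u_j,s)=\sum_n|\rho_j(n)|^2|n|^{-2s}$ does two jobs at once: it supplies the Fourier-coefficient weights needed to apply Kuznetsov, and the resulting sum over $n$ of $S(n,n;q)$, after opening the Kloosterman sum and counting solutions of $b^2\equiv\bar n^2-4\ (\mathrm{mod}\ 4q)$, produces via \eqref{eq:sumofklsums} and the factorization \eqref{ldecomp} precisely the family of central values $L(\tfrac12+it,\chi_D)$ with $Dl^2=\bar n^2-4$ to which \eqref{subconvexity} is applied termwise. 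Establishing this identity at $s=\tfrac12+it$ requires the analytic continuation of the exact first-moment formula (Theorem \ref{1moment exact formula}), which is the technical core of the paper. Finally, $S(T,X)$ is recovered from the moment by the Koyama/Luo--Sarnak device of reinstating and then removing the $L$-values (truncating the Dirichlet series at a length $N$ and optimizing), which is how the moment bound $T^{3/2}X^{1/2+\theta}$ of \eqref{symsquare estimate0} is converted into $TX^{1/2+\theta/2}$ in \eqref{spec.exp.sum new bound}; your proposal contains no mechanism for this final exponent conversion either.
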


As discussed in \cite[Remark 3.1]{ChatCherLaak}, it is not obvious what is the correct order of magnitude of $S(T,X)$ for all $X$ and $T$. In the two dimensional case, Petridis and Risager conjectured in  \cite{PR} that the spectral exponential sum exhibits square root cancelation, and Laaksonen verified this conjecture numerically.

The most important consequence of Theorem \ref{thm:spec.exp.sum new bound} is the new estimate on $E_{\Gamma}(X)$.
\begin{thm}\label{thmPrimeGeodesic new bound} The error term in the Prime Geodesic Theorem for the Picard manifold can be estimated as follows
\begin{equation}\label{PrimeGeodesic new bound}
E_{\Gamma}(X)\ll
X^{3/2+\theta/2+\epsilon}.
\end{equation}
\end{thm}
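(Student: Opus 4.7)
The plan is to substitute the bound from Theorem~\ref{thm:spec.exp.sum new bound} into Nakasuji's explicit formula~\eqref{PrimeGeodesic to spec.sum} via partial summation, and then optimize in $T$. Factoring $X$ out of $X^{1+ir_j}$, the spectral part of $E_{\Gamma}(X)$ equals $2X\Re\,\Sigma(T,X)$ with
\begin{equation*}
\Sigma(T,X)=\sum_{0<r_j\le T}\frac{X^{ir_j}}{1+ir_j}.
\end{equation*}
Treating the counting measure on the $r_j$ as $dS(t,X)$, Abel summation gives
\begin{equation*}
\Sigma(T,X)=\frac{S(T,X)}{1+iT}+i\int_{r_1}^{T}\frac{S(t,X)}{(1+it)^2}\,dt,
\end{equation*}
where $r_1>0$ is the smallest positive spectral parameter, so that $S(t,X)=0$ for $t<r_1$.

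For the boundary term, Theorem~\ref{thm:spec.exp.sum new bound} yields $S(T,X)/(1+iT)\ll X^{1/2+\theta/2}(TX)^{\epsilon}$. For the integral, I split at $t=1$: on $[r_1,1]$ the trivial bound $S(t,X)=O(1)$ (finitely many small eigenvalues) contributes $O(1)$, while on $[1,T]$ the estimate of Theorem~\ref{thm:spec.exp.sum new bound} gives
\begin{equation*}
\left|\frac{S(t,X)}{(1+it)^2}\right|\ll\frac{X^{1/2+\theta/2+\epsilon}}{t},
\end{equation*}
whose integral is $O(X^{1/2+\theta/2+\epsilon})$. Multiplying by the extracted $X$, the spectral contribution to $E_{\Gamma}(X)$ is $O(X^{3/2+\theta/2+\epsilon})$.

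It remains to balance this against the truncation error $O(X^{2}T^{-1}\log X)$ from~\eqref{PrimeGeodesic to spec.sum}. Since Theorem~\ref{thm:spec.exp.sum new bound} is valid up to $T=X^{1/2}$, I choose $T=X^{1/2}$, which pushes the truncation error down to $O(X^{3/2}\log X)$ and is absorbed by $X^{3/2+\theta/2+\epsilon}$; this yields~\eqref{PrimeGeodesic new bound}. The deduction is essentially routine once Theorem~\ref{thm:spec.exp.sum new bound} is in hand: the only point requiring care is that the bound of Theorem~\ref{thm:spec.exp.sum new bound} must be applied uniformly at every scale $t\in[1,T]$ during partial summation, not merely at the endpoint — this is why the linear-in-$T$ shape of~\eqref{spec.exp.sum new bound} (rather than just its value at $T$) is what actually drives the result. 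All substantive analytic content sits inside Theorem~\ref{thm:spec.exp.sum new bound}, so no further obstacle arises.
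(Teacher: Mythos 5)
Your deduction is correct and is exactly the argument the paper has in mind: the authors derive Theorem~\ref{thmPrimeGeodesic new bound} from Theorem~\ref{thm:spec.exp.sum new bound} "as an immediate consequence" via Nakasuji's formula~\eqref{PrimeGeodesic to spec.sum}, partial summation, and the choice $T=X^{1/2}$, just as you do. Your closing remark that the linear-in-$T$ shape of~\eqref{spec.exp.sum new bound} must hold uniformly for all $t\le T$ (not only at the endpoint) is an accurate reading of why the bound is stated in that form.
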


Assuming the Lindel\"{o}f hypothesis $\theta=0$, we improve the best known conditional result \eqref{est:koyama} and establish for the first time the error estimate $O(X^{3/2+\epsilon})$. Note that this is the best possible error admissible by the explicit formula of Nakasuji \eqref{PrimeGeodesic to spec.sum}. This is also an analogue of the best known conditional remainder term $O(X^{2/3+\epsilon})$ in the classical Prime Geodesic Theorem obtained by Iwaniec in \cite[p. 139]{IwPG} under the Lindel\"{o}f hypothesis for quadratic Dirichlet $L$-functions.



Theorem \ref{thm:spec.exp.sum new bound} is a consequence of the following estimate for the mean value of Maa{\ss} Rankin-Selberg $L$-functions on the critical line multiplied by 
the oscillating factor $X^{ir_j}$.

\begin{thm}\label{thm:momentoscill} Let $X\gg1$ and $X^{\epsilon}\le T\le X^{1/2}.$ Then for $s=1/2+it, |t|\ll T^{\epsilon}$ we have
\begin{equation}\label{symsquare estimate0}
\sum_{r_j}\frac{r_j}{\sinh(\pi r_j)}\omega_T(r_j)X^{ir_j}L(u_{j}\otimes u_{j},s)
\ll T^{3/2}X^{1/2+\theta+\epsilon},
\end{equation}
where $\omega_T(r_j)$ is a smooth characteristic function of the interval $(T,2T).$
\end{thm}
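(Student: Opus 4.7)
The plan is to combine an approximate functional equation for the Rankin--Selberg $L$-function with the Kuznetsov trace formula on $\Gamma\setminus\HH^3$, reducing matters to a sum of Salié-type Kloosterman sums over $\Z[i]$, and then evaluate that sum via a Bykovsky--Conrey--Iwaniec style identity that brings quadratic Dirichlet $L$-functions over $\Q(i)$ into play. Using the Hecke factorisation $L(u_j\otimes u_j,s)=\zeta_{\Q(i)}(s)\,L(\mathrm{sym}^2 u_j,s)$, I would first apply an approximate functional equation to $L(\mathrm{sym}^2 u_j,s)$ and express it, up to a dual term and a negligible error, as a smoothly truncated Dirichlet series
$$L(\mathrm{sym}^2 u_j,s)\approx\sum_{n\in\Z[i]\setminus\{0\}}\frac{\lambda_j(n^2)}{|n|^{2s}}\,V\!\Bigl(\frac{|n|^2}{N}\Bigr),$$
where $N$ is a fixed power of $T$ dictated by the archimedean conductor of $\mathrm{sym}^2 u_j$ in the spectral aspect. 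After interchanging summations, the problem reduces to estimating the inner spectral sum
$$\Sigma(n):=\sum_{r_j}\frac{r_j}{\sinh(\pi r_j)}\,\omega_T(r_j)\,X^{ir_j}\,\lambda_j(n^2).$$

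Next, I would apply the Kuznetsov trace formula for $\PSL(2,\Z[i])$ to $\Sigma(n)$ with test function $h(r)=\omega_T(r)X^{ir}$. The diagonal contribution is non-zero only when $n^2$ is a unit, and the Eisenstein contribution is controlled by standard bounds on $\zeta_{\Q(i)}$ along the critical line; what remains is
$$\sum_{c\in\Z[i]\setminus\{0\}}\frac{S(n^2,1;c)}{|c|^2}\,\widetilde h\!\Bigl(\frac{2\pi n}{c}\Bigr),$$
where $\widetilde h$ denotes the Bessel-type transform in Kuznetsov's inversion formula. A stationary-phase analysis, forced by the oscillation $X^{ir}$ in $h$, localises $\widetilde h$ to the range $|c|\asymp |n|T/\sqrt{X}$ with essentially constant amplitude there.

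The heart of the proof lies in the next step. Because $n^2$ is a perfect square, $S(n^2,1;c)$ is of Salié type and factors into a twisted Gauss sum involving a quadratic residue character modulo $c$. Following the Bykovsky--Conrey--Iwaniec strategy transported to $\Q(i)$, summation over $c$ converts the Kloosterman moment into a weighted average
$$\sum_{d} c_d\,L\!\Bigl(\tfrac12+iv,\,\chi_d\Bigr)$$
of central values of quadratic Dirichlet $L$-functions over $\Q(i)$, with $d$ ranging over squarefree Gaussian integers in a region determined by Step 2 and $|v|\ll T^{\epsilon}$. Invoking the subconvexity bound $L(\tfrac12+iv,\chi_d)\ll |d|^{\theta+\epsilon}$ and summing over $d$, $n$, and the dual range yields, after elementary optimisation, the claimed estimate $T^{3/2}X^{1/2+\theta+\epsilon}$. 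The principal obstacle will be this Salié-to-$L$-function conversion over $\Z[i]$: one must set up the correct Bykovsky-type identity with appropriate Hecke characters on Gaussian integers and control the dependence on the auxiliary parameter $v$ uniformly enough to preserve the full strength of $\theta$. The Bessel transform analysis of Step 2 is also delicate, since the phase $X^{ir}$ interacts with the $K$- and $J$-Bessel kernels of the Picard Kuznetsov formula and a precise stationary-phase computation is needed to pin down the effective support of $\widetilde h$.
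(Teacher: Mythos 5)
Your outline follows the Bykovsky--Conrey--Iwaniec/Soundararajan--Young template (approximate functional equation, then Kuznetsov on $\lambda_j(n^2)$, then Sali\'e sums to quadratic $L$-functions), which is \emph{not} the route of the paper, and as written it has gaps that are more than technical. First, the quantitative claims do not cohere. The resonance of the oscillation $X^{ir}$ against the Bessel kernel in the Kuznetsov transform places the argument at $|2\pi n/c|\asymp T\sqrt{X}$, i.e.\ $|c|\asymp |n|/(T\sqrt{X})$, not $|c|\asymp |n|T/\sqrt{X}$; since your approximate functional equation truncates at $|n|\ll T\le\sqrt X$, this forces $|c|<1$ and the entire Kloosterman term would be negligible, which is incompatible with a main term of size $T^{3/2}X^{1/2+\theta}$. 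Relatedly, the discriminants $d$ of the quadratic $L$-functions do not come from the $c$-sum: an additional Poisson/Voronoi summation in $n$ is required, and it is the \emph{dual} of $n$ that indexes the discriminants and must be localised near $|d|\asymp X$ (this is exactly where the factor $X^{1/2+\theta}$ originates, via $|d|^{\theta}$ with $|d|\asymp X$ and roughly $X/T$ admissible $d$'s each weighted by $T^{5/2}X^{-1/2}$). Your sketch omits this step entirely. Second, the "Bykovsky-type identity over $\Z[i]$" that you defer is precisely the arithmetic heart of the matter; the paper supplies it via the congruence count $\rho_q(\bar n^2-4)$ (Lemma 3.4) and Szmidt's generalisation of Zagier's $L$-functions, and one must also handle the units and the ramified prime $1+i$. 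Finally, the factorisation $L(u_j\otimes u_j,s)=\zeta_{\Q(i)}(s)L(\sym^2 u_j,s)$ is incorrect as stated (it omits $|\rho_j(1)|^2$ and $\zeta_{\kk}(2s)^{-1}$), and your $\Sigma(n)$ with $\lambda_j(n^2)$ is not directly Kuznetsov-ready: the trace formula requires $\overline{\rho_j(m)}\rho_j(n)$, so the harmonic weight must be carried through consistently.

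For contrast, the paper avoids the approximate functional equation altogether. It works with the Rankin--Selberg series $\sum_n|\rho_j(n)|^2|n|^{-2s}$ in the region $\Re(s)>3/2$ of absolute convergence, where Kuznetsov with $m=n$ applies directly (no harmonic weight to remove, no Hecke relations needed), producing $\Sigma(s)=\sum_n|n|^{-2s}\sum_q|q|^{-2}S(n,n;q)\check h(2\pi n/q)$. It then analytically continues this expression to $s=1/2$ using the Lerch zeta function over $\Z[i]$ and its functional equation, which simultaneously performs the role of your Poisson step and introduces $\mathscr{L}_{\kk}(s;\bar n^2-4)$. The localisation $|n|\asymp\sqrt X$ is then extracted by stationary phase applied to the exact weight functions $I(n,\tau,s)$ (Lemmas 6.2 and 6.3), after which subconvexity for $L(1/2,\chi_D)$ gives the bound. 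If you want to salvage your route, you must (i) insert and justify the Poisson summation in $n$ and redo the localisation in the dual variable, and (ii) actually prove the Sali\'e-to-$L$-function conversion over $\Z[i]$; at that point you would essentially have reconstructed the paper's Lemmas 3.4 and 4.6--4.8 by other means.
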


The standard method for investigating the left-hand side of \eqref{symsquare estimate0} is to estimate everything by  absolute value so that $X^{ir_j}$ is simply replaced by $1$ and the modulus of the Rankin-Selberg $L$-function is bounded by its square by the means of the Cauchy-Schwarz inequality. In this way, the problem is reduced to studying the second moment. This technique allows proving non-trivial results in certain ranges but the disadvantage is that the information about the behaviour of  $ X^{ir_j}$ is completely lost.

The main novelty and the core idea of our approach is that absolute value estimates are replaced by the method of analytic continuation. More precisely, we prove an exact formula for the left-hand side of  \eqref{symsquare estimate0} which allows us to take into consideration oscillations of the weight function $X^{ir_j}$. 
This approach has already proved to be effective in the two dimensional case (see \cite{BF2}), motivating us to develop the method further and to study the Prime Geodesic Theorem for the Picard manifold. We remark that in the three-dimensional setting, new technical difficulties arise, requiring the change of methodology and more involved analysis of special functions.

\

\section{Description of the problem}
In this section, following the book of Elsrodt, Grunewald and Mennicke \cite{EGM}, we provide some background information required for understanding the Prime Geodesic Theorem in the three-dimensional case. We keep all notations of \cite{EGM}.

The upper half space $\HH^3$ defined by \eqref{3dspace} is equipped with the {\it hyperbolic metric}
\begin{equation}\label{metric}
ds^2=\frac{dx^2+dy^2+dr^2}{r^2}.
\end{equation}

The associated {\it Laplace-Beltrami operator} is given by
\begin{equation*}
\Delta=r^2\left( \frac{\partial^2}{\partial x^2}+\frac{\partial^2}{\partial y^2}+\frac{\partial^2}{\partial r^2}\right)-r\frac{\partial}{\partial r}.
\end{equation*}

According to \cite[Proposition 1.6, p. 6]{EGM}, the {\it hyperbolic distance} $d(P,P')$ between two points  $P=z+rj$ and $P'=z+r'j$ of $\HH^3$ with $z,z' \in \C, \quad r,r'>0$
can be expressed as 
\begin{equation*}
\cosh{d(P,P')}=\frac{|z-z'|+r^2+r^{'2}}{2rr'}.
\end{equation*}


The matrix $$M=\begin{pmatrix}
a &b\\
c & d
\end{pmatrix}\in \PSL(2,\C)$$ 
acts on  $\HH^3$ as follows
\begin{equation}
M(z+rj)=z^*+r^*j, 
\end{equation}
where
\begin{equation}
z^*=\frac{(az+b)(\bar{c}\bar{z}+\bar{d})+a\bar{c}r^2}{|cz+d|^2+|c|^2r^2},\quad r^*=\frac{r}{|cz+d|^2+|c|^2r^2}.
\end{equation}

Elements $\gamma \in \mathbf{SL}(2,\C)$ such that $\gamma \neq \pm I$ can be classified  into four categories (see \cite[Definition 1.3, p. 34]{EGM}):
\begin{itemize}
\item
parabolic iff $|\mathbf{tr}(\gamma)|=2$ and $\mathbf{tr}(\gamma)\in \R$;
\item
hyperbolic iff $|\mathbf{tr}(\gamma)|>2$ and $\mathbf{tr}(\gamma)\in \R$;
\item
elliptic iff $0\leq  |\mathbf{tr}(\gamma)|<2$ and $\mathbf{tr}(\gamma)\in \R$;
\item
loxodromic otherwise.
\end{itemize}
 Note that elements of $\PSL(2, \C)$ are classified the same way as its preimages in $\mathbf{SL}(2,\C)$.
As shown in \cite[pp. 69-70]{EGM}, any hyperbolic or loxodromic element $T $ is conjugate in $\PSL(2,\C)$ to a unique element 
\begin{equation}\label{defat}
D(T)=\begin{bmatrix}
a(T) & 0\\
0 & a(T)^{-1}
\end{bmatrix}
, \quad 
|a(T)|>1,
\end{equation}
which acts on $\HH^3$ as follows
\begin{equation}
D(T)(z+rj)=K(T)z+N(T)rj, \quad z\in \C \text{, } r>0.
\end{equation}
Here $K(T)=a(T)^2$ is called the {\it multiplier} of $T$ and $N(T)=\left|a(T)\right|^2$ is called the {\it norm} of $T$.
Note that $K(T)$ and $N(T)$ depend only on the class of elements congugate to $T$, which we denote by $\{T\}$ .

Any hyperbolic or  loxodromic element $T$ has two distinct fixed points in the extended plane $\mathbf{P^1\C}$, and the geodesic connecting these two points is called the {\it axis} of $T$.
Axis are transformed into itself by the action of $T$.
According to \cite[p. 70]{EGM}, for all hyperbolic or loxodromic elements $T \in \PSL(2,\C)$ we have
\begin{equation}
\log{N(T)}=\inf\{ d(P,TP), \quad P \in \HH^3\},
\end{equation}
\begin{equation}
\log{N(T)}=d(P,TP) \text { iff $P$ is on the axis of } T.
\end{equation}

Let $\Gamma_d$ be a discrete subgroup of $\PSL(2, \C)$ and let $C(T)$ be a centralizer of a hyperbolic or loxodromic element $T$ in $\Gamma_d$.
We say that  $T_0$ is the {\it primitive hyperbolic or loxodromic element} associated to $T$ if $T_0 \in C(T)$ and it has minimal norm $N(T_0)>1$ among all elements of $C(T)$.  
As indicated in \cite[p. 192]{EGM}, $T$ determines uniquely the norm $N(T_0)$, but not  the primitive element $T_0$.

In this paper, we are mainly interested in the Picard group $\Gamma=\PSL(2, \Z[i])$, which acts discontinuously on $\HH^3$.
The quotient $\Gamma\setminus \HH^3$, called the {\it Picard manifold}, is a non-compact hyperbolic $3$-manifold of finite volume.
Primitive hyperbolic or loxodromic conjugacy classes in $\Gamma$ give rise to closed geodesics on $\Gamma\setminus \HH^3$( see \cite[p. 279]{Sarnak}), while norms of these conjugacy classes correspond to lengths of closed geodesics.

Accordingly, the Prime Geodesic Theorem for the Picard manifold is concerned with the asymptotic distribution of the norms of primitive hyperbolic and loxodromic elements of the Picard group. In other words, we are interested in  studying the asymptotic behaviour of the function
\begin{equation}
\pi_{\Gamma}(X)=\# \{\{T_0\}, \quad T_0 \in \Gamma \text{ primitive hyperbolic or loxodromic}, N(T_0)\leq X\}.
\end{equation}

As usual (see \cite[pp. 223-224]{EGM}), it is convenient to work with the summatory function
\begin{equation}
\Psi_{\Gamma}(X)=\sum_{\{T\}}\Lambda(T),
\end{equation}
where the summation is taken over all hyperbolic and loxodromic conjugacy classes in $\Gamma$ and 
\begin{equation}
\Lambda(T)=\frac{\log{N(T_0)}}{m(T)|a(T)-a(T)^{-1}|^2}
\end{equation}
with $m(T)$ being the order of the torsion of the centralizer $C(T)$, $T_0$ being a primitive element associated to $T$ and $a(T)$ being as in \eqref{defat}.

Following the approach of Iwaniec \cite{IwPG}, Nakasuji  showed in \cite[Theorem 4.1]{Nak} that for $1\le T\le X^{1/2}$ we have
\begin{equation}\label{PrimeGeodesic formula Nakasuji}
\Psi_{\Gamma}(X)=\frac{X^2}{2}+2\Re\left(\sum_{0<r_j\le T}\frac{X^{1+ir_j}}{1+ir_j}\right)+O\left(\frac{X^2}{T}\log X\right),
\end{equation}
where $\lambda_j=1+r_j^2$ are the eigenvalues of the hyperbolic Laplacian on $L^2(\Gamma  \setminus \HH^3)$. A detailed proof of \eqref{PrimeGeodesic formula Nakasuji} can be found in \cite[Theorem 5.2]{Nak2}. Note that the explicit formula \eqref{PrimeGeodesic formula Nakasuji} is obtained using the fact that there are no exceptional eigenvalues for the Picard group. This is a consequence of  Szmidt's result $\lambda_1 \geq \pi^2$ proved in \cite[Prop. 2, p. 397]{Szmidt}.

\section{Notation and preliminary results}

Throughout the paper we mostly use notations of \cite{Mot2001} that are slightly different from the standard ones (see \cite[Remark p. 270]{Mot2001}).

Let $\kk=\Q(i)$ be the Gaussian number field. All sums in this paper are over Gaussian integers unless otherwise indicated.

Let $\Gamma(z)$ be the Gamma function. According to the Stirling formula we have
\begin{equation}\label{Stirling1}
\Gamma(z)=\sqrt{2\pi}\exp(-z)z^{z-1/2}\left(1+O(z^{-1})\right)
\end{equation}
for $|z|\rightarrow\infty,|\arg(z)|<\pi.$ Note that instead of $O(z^{-1})$ it is possible to write arbitrarily accurate approximations of the Gamma function by evaluating more terms in the asymptotic expansion. As a consequence of \eqref{Stirling1}, we obtain
\begin{multline}\label{Stirling2}
\Gamma(\sigma+it)=\sqrt{2\pi}|t|^{\sigma-1/2}\exp(-\pi|t|/2)
\exp\left(i\left(t\log|t|-t+\frac{\pi t(\sigma-1/2)}{2|t|}\right)\right)\\\times
\left(1+O(|t|^{-1})\right)
\end{multline}
for $|t|\rightarrow\infty$ and a  fixed $\sigma$. Once again here instead of $O(|t|^{-1})$ it is possible to write an asymptotic expansion with as many terms as needed.

For convenience, we introduce the following notation
\begin{equation*}
\mathop{{\sum}^*}_{n=0}^{\infty}a_n=\frac{a_0}{2}+\sum_{n=1}^{\infty}a_n.
\end{equation*}
For a function $f(x)$, we denote its Mellin transform by
\begin{equation}\label{Mellin def}
\hat{f}(s)=\int_0^{\infty}f(x)x^{s-1}dx.
\end{equation}

For $\Re(s)>1$, the Dedekind zeta function is defined as $$\zeta_{\kk}(s)=4^{-1}\sum_{n\neq 0}|n|^{-2s}.$$
Let $\sigma_{\alpha}(n)=4^{-1}\sum_{d|n}|d|^{2\alpha}$. For $\Re(s)>1$ and $r\in \R$ we have (see \cite[p.403]{Szmidt})

\begin{equation}\label{sigma series}
\sum_{n\neq0}\frac{\sigma_{ir}^2(n)}{|n|^{2s+2ir}}=4\frac{\zeta_{\kk}(s+ir)\zeta^2_{\kk}(s)\zeta_{\kk}(s-ir)}{\zeta_{\kk}(2s)}.
\end{equation}

Let  $[n,x]=\Re(n\bar{x})$. We introduce two types of exponential functions
\begin{equation*}
e(x)=exp(2\pi ix)\quad\hbox{and}\quad e[x]=exp(2\pi i\Re(x)).
\end{equation*}
For $m,n,c\in \Z[i]$ with $c\neq 0$ define the Kloosterman sum
\begin{equation*}
S(m,n;c)=\sum_{\substack{a\pmod{c}\\ (a,c)=1}}e\left[m,\frac{a}{c}+n,\frac{a^*}{c}\right], \quad aa^*\equiv 1\pmod{c}.
\end{equation*}
This sum satisfies Weil's bound (see \cite[(3.5)]{Mot1997})

\begin{equation}\label{Weilbound}
|S(m,n;c)|\leq |c|\sigma_0(c)|(m,n,c)|.
\end{equation}
For $n,q \in\Z[i]$ and $q\neq0$ we have (see \cite[Lemma 1]{Mot2001})
\begin{equation}\label{linear exp sum}
\sum_{a\pmod{q}}e\left[\frac{an}{q}\right]=
\left\{
  \begin{array}{ll}
    |q|^2, & \hbox{if}\quad q|n, \\
    0, & \hbox{otherwise.}
  \end{array}
\right.
\end{equation}

\begin{lem}\label{Lerch lemma}
For $m\in\Z,\xi\in\C$ and $\Re(s)>1$ let
\begin{equation}\label{Lerch zeta}
\zeta_{\kk}(s;m,\xi)=\sum_{n+\xi\neq0}\left(\frac{n+\xi}{|n+\xi|}\right)^{m}\frac{1}{|n+\xi|^{2s}}.
\end{equation}
If $m\neq0$ then $\zeta_{\kk}(s;m,\xi)$ is entire in $s;$  otherwise it is regular in $s$ except for a simple pole at $s=1$ with residue $\pi.$ For $Re(s)<0$ we have
\begin{equation}\label{LerchFE}
\zeta_{\kk}(s;m,\xi)=(-i)^{|m|}\pi^{2s-1}\frac{\Gamma(1-s+|m|/2)}{\Gamma(s+|m|/2)}
\sum_{n\neq0}\left(\frac{n}{|n|}\right)^{m}\frac{e[n\bar{\xi}]}{|n|^{2(1-s)}}.
\end{equation}
\end{lem}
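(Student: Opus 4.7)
The plan is to derive both the analytic continuation and the functional equation for $\zeta_{\kk}(s;m,\xi)$ by expressing it as the Mellin transform of a theta-like series and applying Poisson summation on the lattice $\Z[i]$. Complex conjugation of the summands exchanges $m$ with $-m$ in both sides of \eqref{LerchFE}, so it suffices to treat $m \geq 0$; the case $m < 0$ will then follow by conjugation symmetry.

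For $\Re(s) > 1$, I insert the Gaussian integral $|z|^{-2u} = \pi^u \Gamma(u)^{-1} \int_0^\infty e^{-\pi t |z|^2} t^{u-1}\, dt$ with $u = s + m/2$ into the defining series to obtain
\begin{equation*}
\pi^{-(s+m/2)} \Gamma(s+m/2)\, \zeta_{\kk}(s;m,\xi) = \int_0^\infty t^{s+m/2-1}\, \vartheta_m(t;\xi)\, dt,
\end{equation*}
where $\vartheta_m(t;\xi) = \sum_{n+\xi\neq 0} (n+\xi)^m e^{-\pi t|n+\xi|^2}$. The key step is then the theta transformation
\begin{equation*}
\vartheta_m(t;\xi) = \frac{(-i)^m}{t^{m+1}} \sum_{k \in \Z[i]} k^m e^{-\pi |k|^2/t} e[k\bar\xi]
\end{equation*}
(with a boundary correction for the excluded term $n = -\xi$ when $\xi \in \Z[i]$), which follows from Poisson summation on $\Z[i]$. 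The required Fourier transform of $z \mapsto z^m e^{-\pi t|z|^2}$ is obtained via the angular Bessel decomposition in polar coordinates together with the identity $\int_0^\infty r^{m+1} e^{-\pi t r^2} J_m(2\pi r \rho)\, dr = \rho^m (2\pi t^{m+1})^{-1} e^{-\pi\rho^2/t}$, yielding $(-i)^m t^{-(m+1)} w^m e^{-\pi|w|^2/t}$. This is the source of the factor $(-i)^{|m|}$ appearing in \eqref{LerchFE}.

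Next I split $\int_0^\infty = \int_0^1 + \int_1^\infty$. The second piece is entire in $s$ by the rapid decay of $\vartheta_m$. In the first piece, I apply the theta transformation and substitute $t \to 1/t$, producing $(-i)^m \int_1^\infty t^{(1-s)+m/2-1} \sum_k k^m e^{-\pi|k|^2 t} e[k\bar\xi]\, dt$. For $m \geq 1$ the $k = 0$ term vanishes, so this integral is entire in $s$ and $\zeta_{\kk}(s;m,\xi)$ extends to an entire function. For $m = 0$, the $k = 0$ term contributes $\int_1^\infty t^{-s}\, dt = 1/(s-1)$, producing the simple pole at $s = 1$; multiplying by the prefactor $\pi^s / \Gamma(s)$ and evaluating at $s = 1$ gives the residue $\pi$.

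To extract \eqref{LerchFE} in the range $\Re(s) < 0$, I interchange sum and integral on the dual side (valid since $\sum_{k \neq 0} (k/|k|)^m |k|^{-2(1-s)} e[k\bar\xi]$ converges absolutely there) and use $\int_0^\infty u^{(1-s)+m/2-1} e^{-\pi|k|^2 u}\, du = \pi^{-(1-s+m/2)} \Gamma(1-s+m/2) |k|^{-2(1-s)-m}$. Combining with the Mellin representation, the factors of $\pi$ collapse to $\pi^{2s-1}$ and the Gamma quotient $\Gamma(1-s+|m|/2)/\Gamma(s+|m|/2)$ emerges, yielding exactly the formula in the statement. The main obstacle will be the Fourier--Bessel calculation establishing the theta transformation with the correct constants $(-i)^{|m|}$ and $t^{-(m+1)}$; tracking the prefactor $\pi^s / \Gamma(s)$ so as to recover the residue $\pi$ (rather than $1$) at $s = 1$ also requires care.
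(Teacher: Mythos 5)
The paper does not actually prove this lemma: the stated proof is a citation to Motohashi's Lemma 2 in \cite{Mot2001}. Your argument is the standard Hecke--Epstein theta-function proof (Mellin representation of the completed series, Poisson summation on the self-dual lattice $\Z[i]$ with the harmonic weight $z^m$, Hecke's identity giving the eigenvalue $(-i)^{m}$, and the split $\int_0^1+\int_1^\infty$), which is essentially the proof the cited reference supplies, so in substance you are reproving the lemma rather than taking a different route. The computations you outline are correct: the prefactor $\pi^{-(s+m/2)}\Gamma(s+m/2)$ does return residue $\pi$ at $s=1$ for $m=0$ (consistent with the paper's normalization $\zeta_{\kk}(s)=\tfrac14\sum_{n\neq0}|n|^{-2s}$, whose residue is $\pi/4$), and the scaling $\hat f(w)=(-i)^m t^{-(m+1)}w^m e^{-\pi|w|^2/t}$ is right. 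Two points are compressed in your last paragraph and deserve a line each in a full write-up: (i) to reach \eqref{LerchFE} you must also convert the surviving piece $\int_1^\infty t^{s+m/2-1}\vartheta_m\,dt$ into an integral of the dual theta series over $(0,1)$ (i.e., complete the symmetrization), since interchanging sum and integral only on the already-transformed piece does not by itself produce the full dual Dirichlet series; and (ii) for $m=0$ the completed identity carries polar terms $1/(s-1)$ on one side and $-1/s$ hidden in the dual completion on the other, and one should verify that these match under $s\mapsto 1-s$ so that \eqref{LerchFE} holds without correction terms for $\Re(s)<0$ --- they do. Neither point is a gap in the method, only in the exposition.
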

\begin{proof}
See \cite[Lemma 2]{Mot2001}.
\end{proof}


We denote by $\{\lambda_j=1+r_{j}^{2}$, $j=1,2,\ldots\}$ the non-trivial discrete spectrum of the hyperbolic Laplacian on $L^2(\Gamma  \setminus \HH^3)$, and by $\{u_j\}$ the corresponding orthonormal system of eigenfunctions.
Each function $u_j$ has a Fourier expansion of the form
\begin{equation}
u_j(z)=y\sum_{n\neq0}\rho_j(n)K_{ir_j}(2\pi|n|y)e[nx],
\end{equation}
where $K_{\nu}(z)$ is the K-Bessel function of order $\nu.$

The corresponding Rankin-Selberg $L$-function is defined as
\begin{equation}\label{RankinSelberg def}
L(u_j\otimes u_j,s)=\sum_{n\neq0}\frac{|\rho_j(n)|^2}{|n|^{2s}},\quad \Re{s}>1.
\end{equation}

Let us introduce the function
\begin{equation}
\J_{\nu}(z)=2^{-2\nu}|z|^{2\nu}J^{*}_{\nu}(z)J^{*}_{\nu}(\bar{z}),
\end{equation}
where $J^{*}_{\nu}(z)=J_{\nu}(z)(z/2)^{-\nu}$ and $J_{\nu}(z)$ is the $J$-Bessel function of order $\nu.$
Furthermore, we define the function
\begin{equation}\label{K-function def}
\K_{\nu}(z)=\frac{\J_{-\nu}(z)-\J_{\nu}(z)}{\sin(\pi\nu)}.
\end{equation}
\begin{thm}(Kuznetsov trace formula)
Let $h(r)$ be a holomorphic function in the region $|\Im(r)|<1/2+\epsilon$ such that
\begin{equation}\label{conditions on h}
h(r)=h(-r),\quad h(r)\ll(1+|r|)^{-3-\epsilon}
\end{equation}
for an arbitrary fixed $\epsilon>0.$ Then for any non-zero $m,n\in\Z[i]$
\begin{multline}\label{Kuznetsov formula}
\sum_{j=1}^{\infty}\frac{\overline{\rho_j(m)}\rho_j(n)}{\sinh(\pi r_j)}r_jh(r_j)+
2\pi\int_{-\infty}^{\infty}\frac{\sigma_{ir}(m)\sigma_{ir}(n)}{|mn|^{ir}|\zeta_{\kk}(1+ir)|^2}h(r)dr=\\
\frac{\delta_{m,n}+\delta_{m,-n}}{\pi^2}\int_{-\infty}^{\infty}r^2h(r)dr+
\sum_{q\neq0}\frac{S(m,n;q)}{|q|^2}\check{h}\left(\frac{2\pi\sqrt{mn}}{q}\right),
\end{multline}
where $\delta_{m,n}$ is the Kronecker delta and
\begin{equation}\label{K transform def}
\check{h}(z)=\frac{1}{2}\int_{-\infty}^{\infty}\K_{ir}(z)r^2h(r)dr.
\end{equation}
\end{thm}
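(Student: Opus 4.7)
The plan is to derive the formula via the classical Bruggeman--Kuznetsov Poincar\'{e}-series approach, adapted to the Picard group over Gaussian integers as in Motohashi's framework. For $m\in\Z[i]\setminus\{0\}$ and $\Re(s)$ sufficiently large, I would define the Poincar\'{e} series
$$U_m(P,s)=\sum_{\gamma\in\Gamma_\infty\backslash\Gamma} r(\gamma P)^{1+s}\, e[m\cdot z(\gamma P)]$$
on $\HH^3$, where $P=z+rj$, and then evaluate the inner product $\langle U_m(\cdot,s_1),U_n(\cdot,s_2)\rangle$ over $\Gamma\setminus\HH^3$ in two independent ways. The two evaluations will yield the two sides of \eqref{Kuznetsov formula} for a specific class of test functions, and a Mellin-inversion/density argument will promote the identity to the general $h$ in the statement.

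On the spectral side, one expands $U_m$ in the orthonormal basis $\{u_j\}$ together with the Eisenstein spectrum. The inner product $\langle U_m,u_j\rangle$ is computed by unfolding and invoking the Fourier expansion of $u_j$; the integral in the $r$-variable reduces to the Mellin transform of $K_{ir_j}$, producing a quotient of gamma factors times $\overline{\rho_j(m)}$. Summing over $j$ gives the discrete spectral contribution with the weight $r_j/\sinh(\pi r_j)$ (after absorbing the standard gamma ratios), while the continuous spectrum produces exactly the integral against $\sigma_{ir}(m)\sigma_{ir}(n)/|\zeta_{\kk}(1+ir)|^2$ displayed in \eqref{Kuznetsov formula}.

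On the geometric side, one unfolds one copy of the Poincar\'{e} series against the Fourier expansion of the other. The Bruhat decomposition of $\Gamma_\infty\backslash\Gamma/\Gamma_\infty$ yields the identity double coset (producing the diagonal $(\delta_{m,n}+\delta_{m,-n})$-term after computing the relevant moment integral in $r$ against $r^2$, by \eqref{linear exp sum}) together with the non-trivial double cosets indexed by $q\neq 0$. These give a series of Kloosterman sums $S(m,n;q)/|q|^2$ weighted by a two-dimensional integral in $z=x+iy$ and $r$ that, after the change of variables $z\mapsto 2\pi\sqrt{mn}/q$, depends only on this complex argument.

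The main obstacle will be the identification of this Kloosterman-side kernel with $\check{h}(z)$ as defined in \eqref{K transform def}. In three dimensions the natural Bessel factor is the product $\J_{\nu}(z)=2^{-2\nu}|z|^{2\nu}J^{*}_{\nu}(z)J^{*}_{\nu}(\bar z)$, which is genuinely a two-variable object and does not admit a single-variable Mellin expansion as in the $\PSL(2,\Z)$ setting. To handle it I would exploit the functional equation of the Lerch-type zeta $\zeta_{\kk}(s;m,\xi)$ provided by Lemma \ref{Lerch lemma}, using it as a Voronoi-type device to interchange the roles of the Fourier dual variables and to convert the $z$-integral into a Mellin--Barnes representation for a product of $J$-Bessel functions. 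Contour shifts and the duplication/reflection formulae for $\Gamma$ then turn the resulting kernel into the symmetric combination $(\J_{-ir}-\J_{ir})/\sin(\pi ir)$, which is precisely $\K_{ir}(z)$. Once the formula is established for holomorphic test functions arising from inverse Mellin transforms of products of gamma ratios, the decay condition \eqref{conditions on h} allows approximation of a general $h$ by such functions and passage to the limit under the absolutely convergent sums on both sides.
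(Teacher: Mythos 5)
Your proposal sets out to prove the sum formula from scratch, whereas the paper does not: the theorem is quoted verbatim from Motohashi (Theorem 2 of \cite{Mot2001}), and the only content of the paper's proof is the short verification that Motohashi's form of the integral transform, $i\int_{-\infty}^{\infty}\frac{r^2}{\sinh(\pi r)}\J_{ir}(z)h(r)\,dr$, coincides with \eqref{K transform def}, using the definition \eqref{K-function def}, the evenness of $h$, and $\K_{ir}=\K_{-ir}$. So you are attempting something far more ambitious than what is actually required at this point of the paper.

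As a self-contained proof, your outline follows the correct general strategy (the Bruggeman--Kuznetsov inner-product-of-Poincar\'e-series method, which is indeed how Motohashi and Bruggeman--Motohashi proceed), but it has a genuine gap at exactly the hardest point. Computing $\langle U_m(\cdot,s_1),U_n(\cdot,s_2)\rangle$ in two ways yields \eqref{Kuznetsov formula} only for the special test functions $h$ that arise as products of gamma ratios in $s_1,s_2$; the passage to an arbitrary even holomorphic $h$ satisfying \eqref{conditions on h} is not ``a Mellin-inversion/density argument'' but the inversion theory of the $\K$-transform, i.e.\ the completeness of the kernels $\K_{ir}(z)$ in the relevant function space on $\C\setminus\{0\}$. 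In the three-dimensional case this is a substantial piece of analysis occupying a large part of \cite{BrugMot}, and nothing in your sketch supplies it. A second, smaller misdirection: the functional equation of the Lerch zeta function from Lemma \ref{Lerch lemma} is not the tool that identifies the Kloosterman-side kernel --- in this paper that lemma serves the purely arithmetic purpose of opening up the sums $\sum_n S(n,n;q)|n|^{-2s-w}$ later on, and it plays no role in recognising the geometric-side $(z,r)$-integral as $\K_{ir}$. That identification comes instead from explicit integral representations of products of $J$-Bessel functions over $\C$, of the type recorded in \eqref{K-fun repsentation1}--\eqref{K-fun repsentation2}. As written, your proposal would not close into a proof without importing essentially all of the machinery of \cite{BrugMot} that it is meant to replace; the economical route is simply to cite Motohashi's theorem and check the equivalence of the two forms of $\check{h}$, as the paper does.
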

\begin{proof}
This is \cite[Theorem 2]{Mot2001} except that in \cite{Mot2001} the integral transform \eqref{K transform def} is given in the alternative form
\begin{equation*}
\check{h}(z)=i\int_{-\infty}^{\infty}\frac{r^2}{\sinh(\pi r)}\J_{ir}(z)h(r)dr.
\end{equation*}
It can be shown that the two transforms are equal as follows
\begin{multline*}
i\int_{-\infty}^{\infty}\frac{r^2}{\sinh(\pi r)}\J_{ir}(z)h(r)dr=
\int_{0}^{\infty}i\frac{\J_{ir}(z)-\J_{-ir}(z)}{\sinh(\pi r)}r^2h(r)dr=
\int_{0}^{\infty}\K_{ir}(z)r^2h(r)dr.
\end{multline*}
Since $\K_{ir}(z)=\K_{-ir}(z)$, we obtain \eqref{K transform def}.
\end{proof}
\begin{lem}
Let $\vartheta=\arg(u).$ For $u\neq0$ and $|\Re(\nu)|<1/2$ we have
\begin{equation}\label{K-fun repsentation1}
\K_{\nu}(u)=\frac{8\cos(\pi\nu)}{\pi^2}\int_0^{\pi/2}\cos(2|u|\cos\vartheta\sin\tau)
K_{2\nu}(2|u|\cos\tau)d\tau,
\end{equation}
\begin{multline}\label{K-fun repsentation2}
\K_{\nu}(u)=\frac{8\cos(\pi\nu)}{\pi^2}
\mathop{{\sum}^*}_{m=0}^{\infty}(-1)^m
\left(\left(\frac{u}{|u|}\right)^{2m}+\left(\frac{u}{|u|}\right)^{-2m}\right)\\
\times\int_0^{\pi/2}J_{2m}(2|u|\sin\tau)K_{2\nu}(2|u|\cos\tau)d\tau.
\end{multline}
\end{lem}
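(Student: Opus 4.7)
The plan is to simplify the definition of $\J_\nu$, prove \eqref{K-fun repsentation1}, and then deduce \eqref{K-fun repsentation2} from it by a Jacobi--Anger expansion.

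First I would observe that writing $u = |u|e^{i\vartheta}$, the phase factors $(u/2)^{-\nu}$ and $(\bar u/2)^{-\nu}$ in $J^*_\nu(u)\, J^*_\nu(\bar u)$ combine to $(|u|/2)^{-2\nu}$, cancelling the factor $2^{-2\nu}|u|^{2\nu}$ in the definition. Thus
\[
\J_\nu(u) = J_\nu(u)\, J_\nu(\bar u), \qquad \K_\nu(u) = \frac{J_{-\nu}(u)\, J_{-\nu}(\bar u) - J_\nu(u)\, J_\nu(\bar u)}{\sin(\pi\nu)}.
\]

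For \eqref{K-fun repsentation1}, I would start from Poisson's representation
\[
J_\nu(z) = \frac{(z/2)^\nu}{\sqrt\pi\, \Gamma(\nu + 1/2)} \int_{-1}^{1} (1 - x^2)^{\nu - 1/2} e^{izx}\, dx,
\]
valid for $\Re\nu > -1/2$ and hence applicable to both $\pm \nu$ in the strip $|\Re\nu| < 1/2$. Applied to $J_\nu(u)\, J_\nu(\bar u)$, this produces a double integral over $[-1, 1]^2$. After the substitution $x = \sin\alpha$, $y = \sin\beta$ followed by the rotation $\tau = (\alpha + \beta)/2$, $\phi = (\alpha - \beta)/2$, the exponent becomes
\[
i(u \sin\alpha + \bar u \sin\beta) = 2i|u|\cos\vartheta\, \sin\tau\cos\phi \,-\, 2|u|\sin\vartheta\, \cos\tau\sin\phi,
\]
which cleanly separates the $\sin\tau$-dependent and $\phi$-dependent contributions. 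Forming $(\J_{-\nu} - \J_\nu)/\sin(\pi\nu)$ symmetrises away the $\sin\vartheta$-dependent part of the $\phi$-integrand, the $\tau$-part collapses to $\cos((u + \bar u)\sin\tau)$, and the surviving $\phi$-integral is matched to the Mehler--Sonine form $K_{2\nu}(x) = \int_0^\infty e^{-x\cosh t} \cosh(2\nu t)\, dt$ after a contour deformation in $\phi$. The prefactor $8\cos(\pi\nu)/\pi^2$ is recovered from the Beta-function normalisations via the reflection formula $\Gamma(\nu + 1/2)\,\Gamma(-\nu + 1/2) = \pi/\cos(\pi\nu)$.

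For \eqref{K-fun repsentation2}, I would insert the Jacobi--Anger expansion
\[
\cos(z \cos\vartheta) = J_0(z) + 2 \sum_{m=1}^{\infty} (-1)^m J_{2m}(z) \cos(2m\vartheta)
\]
into \eqref{K-fun repsentation1} with $z = 2|u|\sin\tau$, rewrite $\cos(2m\vartheta) = \tfrac12 \bigl( (u/|u|)^{2m} + (u/|u|)^{-2m} \bigr)$, and interchange the sum with the $\tau$-integral. Absolute convergence is provided by the uniform bound $|J_{2m}(z)| \le (|z|/2)^{2m}/(2m)!$ on $\tau \in [0, \pi/2]$, with the $\mathop{{\sum}^*}$ notation absorbing the $\tfrac12$ in the $m=0$ contribution.

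The main obstacle is the $\phi$-integral step inside the proof of \eqref{K-fun repsentation1}: converting a Poisson-type $\phi$-integral into a Mehler--Sonine representation of $K_{2\nu}$ is not automatic and requires a contour shift whose validity rests precisely on the hypothesis $|\Re\nu| < 1/2$, which is exactly the intersection of the domains of the Poisson representation for $\pm\nu$. An alternative, more mechanical route is to apply Mellin--Barnes representations of $J_\nu$ and $K_{2\nu}$ simultaneously, match the two contour integrands, and reduce the resulting Gamma quotients with the reflection and duplication formulas; this bypasses the contour deformation at the price of heavier bookkeeping.
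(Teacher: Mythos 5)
Your treatment of \eqref{K-fun repsentation2} coincides with the paper's: insert the Jacobi--Anger expansion of $\cos(2|u|\cos\vartheta\sin\tau)$ into \eqref{K-fun repsentation1}, write $e^{2im\vartheta}+e^{-2im\vartheta}=(u/|u|)^{2m}+(u/|u|)^{-2m}$, and fold the sum using $J_{-2m}=J_{2m}$; that part is complete and correct, and your simplification $\J_\nu(u)=J_\nu(u)J_\nu(\bar u)$ is also right. The divergence is at \eqref{K-fun repsentation1}: the paper does not prove this identity but quotes it from Motohashi \cite{Mot1997} (Lemma 8) and \cite{BrugMot}, whereas you attempt a derivation from Poisson's integral, and that derivation has a genuine gap at exactly the step you flag as the main obstacle.

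Concretely: after inserting Poisson's representation for $J_{\pm\nu}(u)J_{\pm\nu}(\bar u)$ and rotating to $\tau=(\alpha+\beta)/2$, $\phi=(\alpha-\beta)/2$, the integrand is $(\cos^2\tau-\sin^2\phi)^{\pm2\nu}\exp\bigl(2i|u|\cos\vartheta\sin\tau\cos\phi-2|u|\sin\vartheta\cos\tau\sin\phi\bigr)$ over the square $|\tau|+|\phi|\le\pi/2$. Two things block the factorisation you assert. First, the $\cos\vartheta$-phase carries the factor $\cos\phi$ and the weight $(\cos^2\tau-\sin^2\phi)^{\pm2\nu}$ mixes the variables, so the double integral does not split into $\cos(2|u|\cos\vartheta\sin\tau)$ times a pure $\phi$-integral; the ``collapse of the $\tau$-part'' does not occur. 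Second, the $\sin\vartheta$-dependence is not symmetrised away: $\phi\mapsto-\phi$ preserves the domain and all other factors, so only the odd part of $e^{-2|u|\sin\vartheta\cos\tau\sin\phi}$ cancels and a factor $\cosh(2|u|\sin\vartheta\cos\tau\sin\phi)$ survives, also after forming $(\J_{-\nu}-\J_\nu)/\sin(\pi\nu)$ --- yet the right-hand side of \eqref{K-fun repsentation1} contains no $\sin\vartheta$ at all. The eventual disappearance of $\sin\vartheta$ (which must happen, since $\J_\nu(u)$ depends only on $|u|$ and $u+\bar u$) requires a nontrivial recombination that your sketch does not supply. So as written the key identity is asserted rather than proved; to make this self-contained you would need either to carry out the Mellin--Barnes matching you mention, or to use the classical closed-form evaluation of $\int_0^{\pi/2}J_{2m}(2|u|\sin\tau)K_{2\nu}(2|u|\cos\tau)\,d\tau$ in terms of products of Bessel functions and resum --- or simply cite Motohashi's lemma, as the paper does.
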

\begin{proof}
First, note that $\K_{\nu}(u)=\K_{\nu,0}(u)$, where $\K_{\nu,0}(u)$ is defined in \cite[(6.21), (7.21)]{BrugMot}. Then \eqref{K-fun repsentation1} and \eqref{K-fun repsentation2} follow from \cite[(12.16)]{BrugMot}. More precisely, \eqref{K-fun repsentation1} is a consequence of \cite[Lemma 8]{Mot1997} and \eqref{K-function def}. To prove \eqref{K-fun repsentation2}, we use the relation (see \cite[10.12.2]{HMF})
\begin{equation*}
\cos(2|u|\cos\vartheta\sin\tau)=\sum_{m\in\Z}(-1)^me(m\vartheta)J_{2m}(2|u|\sin\tau).
\end{equation*}
Then it follows from \eqref{K-fun repsentation1} that
\begin{equation*}
\K_{\nu}(u)=\frac{8\cos(\pi\nu)}{\pi^2}
\sum_{m=-\infty}^{\infty}(-1)^me(m\vartheta)
\int_0^{\pi/2}J_{2m}(2|u|\sin\tau)K_{2\nu}(2|u|\cos\tau)d\tau.
\end{equation*}
Since (see \cite[10.4.1]{HMF}) $J_{2m}(z)=J_{-2m}(z)$, we obtain \eqref{K-fun repsentation2}.
\end{proof}
We introduce the $L$-function
\begin{equation}\label{L beatiful def}
\mathscr{L}_{\kk}(s;n)=\frac{\zeta_{\kk}(2s)}{\zeta_{\kk}(s)}\sum_{q\neq0}\frac{\rho_q(n)}{|q|^{2s}},\quad Re{s}>1,
\end{equation}
\begin{equation}\label{rho def}
\rho_q(n):=\#\{x\Mod{2q}:x^2\equiv n\Mod{4q}\}.
\end{equation}
Note that solutions of the congruence are counted in the ring of the Gaussian integers. 

Generalising the real case studied by Zagier in \cite[Proposition 3]{Z}, Szmidt investigated properties of the $L$-function \eqref{L beatiful def} in \cite{Szmidt}. Some of these properties are listed below.
Note that $\mathscr{L}_{\kk}(s;n)=4\zeta(s,n)/\zeta_{\kk}(s)$ in the notations of \cite{Szmidt}. 

The $L$-function $\mathscr{L}_{\kk}(s;n)$ does not vanish only for $n \equiv 0,1 \Mod{4}.$   
If $n=0$ then
\begin{equation}\label{L beautiful 0}
\mathscr{L}_{\kk}(s;0)=4\zeta_{\kk}(2s-1).
\end{equation}
Otherwise, for $n=Dl^2$ ($D$ is the discriminant of the corresponding extension of $\kk$) we have

\begin{equation}\label{ldecomp}
\mathscr{L}_{\kk}(s;n)=4T_{l}^{(D)}(s)L(s,\chi_D),
\end{equation}
where
\begin{equation}\label{eq:td}
T_{l}^{(D)}(s)=\frac{1}{4}\sum_{d|l}\chi_D(d)\mu(d)|d|^{-2s}\sigma_{1-2s}\left(\frac{l}{d}\right)
\end{equation}
and
\begin{equation}
L(s,\chi_D)=\frac{1}{4}\sum_{n\neq0}\frac{\chi_D(n)}{|n|^{2s}}
\end{equation}
with $\chi_D(n)=(D/n)$ being the corresponding Kronecker symbol for the quadratic extension of $\kk.$ 

Let $\theta$ denote the subconvexity exponent for  $L(1/2+it,\chi_D)$ so that
\begin{equation}\label{subconvexity}
L(1/2+it,\chi_D)\ll (1+|t|)^A|D|^{\theta}.
\end{equation}

According to the results of Michel-Venkatesh \cite{MV}, Wu \cite{Wu} and Nakasuji \cite{Nak3}, we can take  $$\theta=A=1/2-(1-2\alpha)/8+\epsilon, \quad \alpha=7/64.$$

It follows from  \eqref{ldecomp},  \eqref{eq:td} and \eqref{subconvexity}   that
\begin{equation}\label{Lbeaut subconvex}
\mathscr{L}_{\kk}(1/2+it;n)\ll (1+|t|)^A|n|^{\theta+\epsilon}.
\end{equation}

Next, we prove an analogue of \cite[Lemma 4.1]{BF1}, relating sums of Kloosterman sums and the $L$-function \eqref{L beatiful def}.
\begin{lem}
The following identity holds
\begin{equation}\label{eq:sumofklsums}
\sum_{q\neq0}\frac{1}{|q|^{2+2s}}\sum_{c\Mod{q}}S(c,c;q)e\left[n\overline{c/q}\right]=\frac{\zeta_{\kk}(s)}{\zeta_{\kk}(2s)}
\mathscr{L}_{\kk}(s;\bar{n}^2-4).
\end{equation}
\end{lem}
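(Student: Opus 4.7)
The plan is to transform the double sum on the left-hand side into the Dirichlet series $\sum_{q \neq 0}\rho_q(\bar n^2 - 4)/|q|^{2s}$, which by the definition \eqref{L beatiful def} gives the claimed $L$-function expression on the right.

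First, I will expand $S(c,c;q)$ as a sum over $a \pmod q$ coprime to $q$ and interchange the orders of summation. Using $\Re(w) = \Re(\bar w)$ applied to $w = n\bar c/\bar q$ gives $e[n\overline{c/q}] = e[\bar n c/q]$, which combines with the Kloosterman sum contribution to produce a single additive character $e[c(a+a^*+\bar n)/q]$ in the summation variable $c$. The orthogonality relation \eqref{linear exp sum} then collapses the $c$-sum to $|q|^2$ whenever $q \mid (a + a^* + \bar n)$, and to zero otherwise. Using $aa^* \equiv 1 \pmod q$, this divisibility is equivalent to the quadratic congruence
\[
a^2 + \bar n a + 1 \equiv 0 \pmod q,
\]
with the coprimality $(a,q) = 1$ becoming automatic. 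Denoting the number of such $a \pmod q$ by $N(q,n)$, this step gives
\[
\sum_{c \pmod q} S(c,c;q)\,e[n\overline{c/q}] = |q|^2 N(q,n).
\]

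Next, I will set up a bijection, via the completion of the square $a \mapsto x = 2a + \bar n$, between the solution set just obtained and the set $\{x \pmod{2q} : x^2 \equiv \bar n^2 - 4 \pmod{4q}\}$ counted by $\rho_q(\bar n^2 - 4)$. Multiplying $a^2 + \bar n a + 1 \equiv 0 \pmod q$ by $4$ yields $(2a + \bar n)^2 \equiv \bar n^2 - 4 \pmod{4q}$, so the map lands in the target; injectivity follows since $2(a - a') \equiv 0 \pmod{2q}$ forces $a \equiv a' \pmod q$. For surjectivity, given $x \pmod{2q}$ with $x^2 \equiv \bar n^2 - 4 \pmod{4q}$, I need to show $x \equiv \bar n \pmod 2$, so that $a := (x - \bar n)/2 \pmod q$ makes sense as a Gaussian integer. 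Reducing the congruence modulo $4$ gives $(x-\bar n)(x+\bar n) \equiv 0 \pmod 4$, and a short case analysis on the parities of the real and imaginary parts of $x - \bar n$ and $x + \bar n$ forces $2 \mid (x - \bar n)$ in $\Z[i]$.

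Combining these steps, I will obtain $N(q,n) = \rho_q(\bar n^2 - 4)$ and hence
\[
\sum_{q \neq 0}\frac{1}{|q|^{2+2s}}\sum_{c \pmod q} S(c,c;q)\,e[n\overline{c/q}] = \sum_{q \neq 0}\frac{\rho_q(\bar n^2 - 4)}{|q|^{2s}} = \frac{\zeta_{\kk}(s)}{\zeta_{\kk}(2s)}\,\mathscr{L}_{\kk}(s; \bar n^2 - 4),
\]
from the definition \eqref{L beatiful def}. The main technical subtlety is the surjectivity verification above, which is the Gaussian analogue of the parity reduction in the proof of \cite[Lemma 4.1]{BF1} and reflects the ramification of $2$ in $\Z[i]$.
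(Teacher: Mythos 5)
Your proposal is correct and follows essentially the same route as the paper: open the Kloosterman sum, apply the orthogonality relation \eqref{linear exp sum} to reduce to the congruence $a^2+\bar n a+1\equiv 0\pmod q$, identify its solution count with $\rho_q(\bar n^2-4)$, and sum over $q$ using \eqref{L beatiful def}. The only difference is that you explicitly carry out the completing-the-square bijection (including the $(1+i)$-ramification/parity check for surjectivity) that the paper dispatches with the remark ``similarly to the real case, one can prove\ldots''; your added detail is sound.
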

\begin{proof}
Since $\Re(a\bar{b})=\Re(\bar{a}b)$, it follows that $e[n\overline{c/q}]=e[\bar{n}c/q].$ Consider
\begin{equation*}
S:=\sum_{c\Mod{q}}S(c,c;q)e[n\overline{c/q}]=
\sum_{c\Mod{q}}\sum_{\substack{a \Mod{q}\\(a,q)=1}}e\left[ \frac{(a+a^*+\bar{n})c}{q}\right],
\end{equation*}
where $aa^*\equiv 1\Mod{q}$. Using \eqref{linear exp sum}, we obtain
\begin{equation*}
S=
\sum_{\substack{a \Mod{q},(a,q)=1\\ a+a^*+\bar{n}\equiv 0 \Mod{q}}}|q|^2=
\sum_{\substack{a \Mod{q},\\ a^2+a\bar{n}+1\equiv 0 \Mod{q}}}|q|^2.
\end{equation*}
Similarly to the real case, one can prove that there is one-to-one correspondence between the solutions $a \Mod{q}$ of $a^2+a\bar{n}+1\equiv 0 \Mod{q}$ and  the solutions $b \Mod{2q}$ of $b^2\equiv \bar{n}^2-4 \Mod{4q}$. Thus using \eqref{rho def} we have
\begin{equation*}
S=\sum_{\substack{b\Mod{2q}\\ b^2\equiv \bar{n}^2-4\Mod{4q}}}|q|^2=|q|^2\rho_q(\bar{n}^2-4).
\end{equation*}
Finally,
\begin{multline*}
\sum_{q\neq0}\frac{1}{|q|^{2+2s}}\sum_{c\Mod{q}}S(c,c;q)e\left[n\overline{c/q}\right]=
\sum_{q\neq0}\frac{\rho_q(\bar{n}^2-4)}{|q|^{2s}}=
\frac{\zeta_{\kk}(s)}{\zeta_{\kk}(2s)}
\mathscr{L}_{\kk}(s;\bar{n}^2-4).
\end{multline*}
\end{proof}


\section{Exact formula for the first moment of Maa{\ss} Rankin-Selberg $L$-functions}

In this section we prove an exact formula for first moment
\begin{equation}\label{firstmomentdef}
M_1(s):=\sum_{j}\frac{r_j}{\sinh(\pi r_j)}h(r_j)L(u_{j}\otimes u_{j},s),
\end{equation}
where $h(r)$ is an even function, holomorphic in any fixed horizontal strip and satisfying the conditions
\begin{equation}\label{conditions on h 1}
h(\pm(n-1/2)i)=0,\quad h(\pm ni)=0\quad\hbox{for}\quad n=1,2\ldots,N,
\end{equation}
\begin{equation}\label{conditions on h 2}
h(r)\ll\exp(-c|r|^2)
\end{equation}
for some fixed $N$ and $c>0.$

We remark that our approach here differs from the one we used in the two dimensional case in \cite{BF2}. 
Our method incorporates some ideas from \cite{BF1} and from  Motohashi's proof \cite{Mot1992} of the exact formula for the second moment of classical Maa{\ss} $L$-functions. 

\subsection{Exact formula for large $s$}
 For $\Re{s}>3/2$ we define
\begin{equation}\label{sigmadef}
\Sigma(s)=\sum_{n\neq0}\frac{1}{|n|^{2s}}\sum_{q\neq0}\frac{S(n,n;q)}{|q|^2}\check{h}\left(\frac{2\pi n}{q}\right),
\end{equation}
where the transform $\check{h}(z)$ is given by \eqref{K transform def}.

First of all, we identify the region of convergence of \eqref{sigmadef}.
\begin{lem}\label{lem abs convergent}
For $\Re{s}>3/2$ the double series \eqref{sigmadef} converges absolutely.
\end{lem}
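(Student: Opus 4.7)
The strategy is to combine Weil's bound on the Kloosterman sum with two-sided pointwise bounds on the $K$-transform $\check h$, reducing absolute convergence to a convergent lattice sum over Gaussian integers.

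First, by Weil's bound \eqref{Weilbound} together with $\sigma_0(q)\ll|q|^\epsilon$, absolute convergence of \eqref{sigmadef} reduces to that of
$$\sum_{n,q\neq 0}\frac{|q|^{\epsilon}\,|(n,q)|}{|n|^{2\sigma}\,|q|}\,\Bigl|\check h\Bigl(\frac{2\pi n}{q}\Bigr)\Bigr|,\qquad \sigma=\Re s.$$

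Second, I would establish the pointwise bound $|\check h(z)|\ll_A\min(|z|^{A},|z|^{-A})$ for any fixed $A>0$. For large $|z|$, I would use the representation \eqref{K-fun repsentation1}: the inner integral $\Phi(u):=\int r^{2}\cosh(\pi r)h(r)K_{2ir}(u)\,dr$ is smooth and decays exponentially for $u\to\infty$ by the rapid decay of $K_{2ir}$ and of $h$, so $\Phi(2|z|\cos\tau)$ concentrates near $\tau=\pi/2$ on a window of width $\asymp|z|^{-1}$; combined with repeated integration by parts in the oscillating factor $\cos(2|z|\cos\vartheta\sin\tau)$ this yields arbitrary polynomial decay. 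For small $|z|$, I would shift the $r$-contour in \eqref{K transform def} downwards; since $\K_\nu(z)$ is entire in $\nu$ (as shown in the discussion preceding \eqref{K-fun repsentation1}) and $h$ is holomorphic in any horizontal strip with Gaussian decay, the shift is legitimate, and the vanishing conditions \eqref{conditions on h 1} compensate the apparent singularities at integer and half-integer values of $ir$ that arise in the $\J_{-ir}/\sin(\pi ir)$ part of $\K_{ir}(z)$, allowing one to extract the factor $|z|^{A}$ for any $A\le 2N$.

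Third, I would split the double sum into $|n|\le|q|$ and $|n|>|q|$, applying the small-$|z|$ bound in the first region and the large-$|z|$ bound in the second. Handling the gcd factor via the averaging identity
$$\sum_{\substack{q\in\Z[i]\\|q|\le R}}|(n,q)|\;\ll\;R^{2}|n|^{\epsilon},$$
proved by grouping $q$ according to $d=(n,q)\mid n$ and using $\sigma_{-1}(n)\ll|n|^\epsilon$, both regions collapse dyadically to a tail of the form $\sum_{n}|n|^{1-2\sigma+\epsilon}$, which converges on the Gaussian integer lattice precisely when $\Re s>3/2$.

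The main obstacle, and the source of the specific threshold $3/2$, is the treatment of the gcd factor $|(n,q)|$ from Weil's bound: estimating it pointwise by $\min(|n|,|q|)$ only yields absolute convergence for $\Re s>5/2$, so one is forced to exploit that $|(n,q)|$ is small on average over Gaussian integer pairs $(n,q)$. A secondary technical point is ensuring that the contour shift used to obtain the small-$|z|$ bound is actually compatible with the vanishing conditions \eqref{conditions on h 1}, which is what makes the conditions on $h$ play their role already at this stage.
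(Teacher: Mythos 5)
Your overall strategy --- Weil's bound \eqref{Weilbound} combined with two-sided pointwise bounds on $\check h$, followed by splitting the lattice sum according to $|n|\lessgtr|q|$ --- is the same as the paper's, which uses $\check h(u)\ll\min(|u|^{3/2-\epsilon},|u|^{\epsilon})$ obtained from the Mellin--Barnes representation \eqref{K transform1}. The genuine gap is your claimed bound $|\check h(z)|\ll_A|z|^{-A}$ for large $|z|$: it is both unprovable by your argument and false in general. In the representation coming from \eqref{K-fun repsentation1}, the function $\Phi(2|z|\cos\tau)$ is non-negligible only for $\tau$ within $O(|z|^{-1}\log|z|)$ of $\pi/2$, and on that window the phase $2|z|\cos\vartheta\sin\tau$ varies by only $O(|z|^{-1}\log^{2}|z|)$ --- there is no oscillation left to integrate by parts against, so this route yields at best $\check h(z)\ll|z|^{-1}$. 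Moreover, since $\int_0^\infty K_{2ir}(v)\,dv=\pi/(2\cosh(\pi r))$, the coefficient of the resulting $|z|^{-1}$ term is essentially $\int_{-\infty}^{\infty}r^2h(r)\,dr$, which is nonzero for generic admissible $h$, so super-polynomial decay at infinity simply does not hold. The lemma survives because any bound up to $O(|z|^{\epsilon})$ at infinity suffices in the region $|n|>|q|$, and that is exactly what the paper extracts by shifting the contour in \eqref{K transform1} to $a=\epsilon$ using \eqref{h star vanish}; but as written your second step asserts a false estimate. A smaller defect of the same step: for small $|z|$ you cannot shift the whole $r$-contour ``downwards'', since the piece $\J_{-ir}(z)\approx(|z|/2)^{-2ir}/\Gamma(1-ir)^{2}$ of $\K_{ir}(z)$ then \emph{grows} as $z\to0$; one must split $\K_{ir}$ into its two summands and shift them in opposite directions, which is where the vanishing at $\pm i,\dots,\pm Ni$ in \eqref{conditions on h 1} actually enters.

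Your diagnosis of where the exponent $3/2$ comes from is also off. It is not the gcd factor: even for coprime pairs, the diagonal terms with $|q|\asymp|n|$, where $2\pi n/q\asymp1$ and $\check h\asymp1$, contribute about $|n|^{2}$ lattice points $q$, each of size $|n|^{-2\sigma}|q|^{-1}\asymp|n|^{-2\sigma-1}$, hence $|n|^{1-2\sigma}$ per $n$; summability over the Gaussian lattice then forces $2\sigma-1>2$, i.e.\ $\Re s>3/2$, and no amount of decay of $\check h$ at $0$ or $\infty$ improves this. The gcd average is a necessary but routine refinement (it costs only $|n|^{\epsilon}$), and the pointwise bound $|(n,q)|\le\min(|n|,|q|)$ in fact already gives $\Re s>2$, not $\Re s>5/2$.
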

\begin{proof}
Convergence of the double series \eqref{sigmadef} for $\Re{s}>3/2$ follows from \eqref{Weilbound} and the estimate
\begin{equation}\label{estimate on transform of h}
\check{h}(u)\ll\min(|u|^{3/2-\epsilon},|u|^{\epsilon}).
\end{equation}
To prove \eqref{estimate on transform of h} we proceed as follows.  According to \eqref{K transform def} and \eqref{K-fun repsentation1} we have
\begin{equation*}
\check{h}(u)=\frac{4}{\pi^2}\int_0^{\pi/2}\cos(2|u|\cos\vartheta\sin\tau)
\int_{-\infty}^{\infty}r^2h(r)\cosh(\pi r)K_{2ir}(2|u|\cos\tau)drd\tau,
\end{equation*}
where $\vartheta=\arg(u).$ Following the arguments of Motohashi \cite[(2.11),(2.12)]{Mot1992}, we obtain for $0<a<1/2$
\begin{equation}\label{K transform1}
\check{h}(u)=\frac{1}{\pi^2}\int_0^{\pi/2}\cos(2|u|\cos\vartheta\sin\tau)
\int_{(a)}\frac{h^{*}(s)}{\cos(\pi s)}(|u|\cos\tau)^{-s}dsd\tau,
\end{equation}
where
\begin{equation}\label{h star}
h^{*}(s)=\int_{-\infty}^{\infty}r^2h(r)\coth(\pi r)\frac{\Gamma(s+ir)}{\Gamma(1-s+ir)}dr.
\end{equation}
Due to the fact that $h(r)$ satisfies \eqref{conditions on h 1}, we can move the line of integration in \eqref{h star} to $\Im(r)=-C$ with $0<C<N+1$. This yields the analytic continuation of $h^{*}(s)$ to  the region $\Re(s)>-C$. Since the function $h(r)$ is even, we obtain by repeating Motohashi's proof of \cite[(2.14)]{Mot1992} that
\begin{equation}\label{h star vanish}
h^{*}(\pm i/2)=0.
\end{equation}
Thus we can move the line of integration in \eqref{K transform1} to $a=-3/2+\epsilon$ without crossing any pole. The rapid decay of $h(r)$, as indicated by \eqref{conditions on h 2}, implies that if $\Re(s)$ is bounded, then $h^{*}(s)\ll |\Im(s)|^A$ for a sufficiently large $A$.  Therefore, $\check{h}(u)\ll |u|^{3/2-\epsilon}.$ Similarly, moving the line of integration in \eqref{K transform1} to $a=\epsilon$, we show that $\check{h}(u)\ll |u|^{\epsilon}.$
\end{proof}

The next step is to express the moment in terms of the double series \eqref{sigmadef}.
\begin{lem} For $\Re{s}>3/2$ we have
\begin{multline}\label{firstmoment=MT+Sigma-ContSpectr}
M_1(s)=
\frac{4\zeta_{\kk}(s)}{\pi^2}\int_{-\infty}^{\infty}r^2h(r)dr+\Sigma(s)
-8\pi\frac{\zeta^2_{\kk}(s)}{\zeta_{\kk}(2s)}\int_{-\infty}^{\infty}
\frac{\zeta_{\kk}(s+ir)\zeta_{\kk}(s-ir)}{\zeta_{\kk}(1+ir)\zeta_{\kk}(1-ir)}h(r)dr.
\end{multline}
\end{lem}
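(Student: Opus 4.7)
The strategy is to apply the Kuznetsov trace formula \eqref{Kuznetsov formula} with $m = n$, multiply through by $|n|^{-2s}$, and sum over non-zero Gaussian integers $n$. The three terms on the right-hand side of \eqref{firstmoment=MT+Sigma-ContSpectr} will correspond respectively to the delta term, the Eisenstein term (moved to the left), and the Kloosterman term of Kuznetsov's identity. The hypotheses \eqref{conditions on h 1}, \eqref{conditions on h 2} imply the conditions \eqref{conditions on h} needed to apply the Kuznetsov formula, so this input is available.

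Concretely, I would first unfold the Rankin-Selberg $L$-function using \eqref{RankinSelberg def} to write
\begin{equation*}
M_1(s)=\sum_{n\neq0}\frac{1}{|n|^{2s}}\sum_{j}\frac{r_j h(r_j)}{\sinh(\pi r_j)}|\rho_j(n)|^2,
\end{equation*}
which is legitimate for $\Re(s)$ large since the individual spectral sums are known to grow polynomially in $|n|$ while we gain from $|n|^{-2s}$. Next, for each fixed $n\neq 0$, I would invoke the Kuznetsov trace formula with $m=n$, noting that $\delta_{n,n}+\delta_{n,-n}=1$ (the second delta vanishes because $n\neq 0$), and move the Eisenstein integral to the left-hand side. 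Summing the resulting equality against $|n|^{-2s}$ yields
\begin{equation*}
M_1(s)+2\pi\int_{-\infty}^{\infty}\frac{h(r)}{|\zeta_{\kk}(1+ir)|^2}\sum_{n\neq0}\frac{\sigma_{ir}(n)^2}{|n|^{2s+2ir}}dr=\frac{4\zeta_{\kk}(s)}{\pi^2}\int_{-\infty}^{\infty}r^2 h(r)dr+\Sigma(s),
\end{equation*}
where I have used $\sum_{n\neq0}|n|^{-2s}=4\zeta_{\kk}(s)$ on the delta-term side, identified the Kloosterman-sum part with $\Sigma(s)$ from \eqref{sigmadef}, and used $|\zeta_{\kk}(1+ir)|^2=\zeta_{\kk}(1+ir)\zeta_{\kk}(1-ir)$ on the Eisenstein side. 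Applying the Dirichlet-series identity \eqref{sigma series} to the inner sum over $n$ transforms the Eisenstein integral into the third term of \eqref{firstmoment=MT+Sigma-ContSpectr}, and rearranging gives the claimed formula.

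The main technical obstacle is justifying the interchange of summations and integrations at every stage. Absolute convergence of $\Sigma(s)$ for $\Re(s)>3/2$ is already handled by Lemma \ref{lem abs convergent} via the bound \eqref{estimate on transform of h} combined with Weil's inequality \eqref{Weilbound}. For the Eisenstein term, absolute convergence of the Dirichlet series $\sum_n \sigma_{ir}(n)^2/|n|^{2s+2ir}$ requires $\Re(s)>1$, and the Gaussian decay \eqref{conditions on h 2} of $h(r)$ handles the integration in $r$; the factor $1/|\zeta_{\kk}(1+ir)|^2$ is controlled by standard bounds since $\zeta_{\kk}(1+ir)\neq 0$ on $\Re=1$. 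For the spectral sum, convexity bounds on $L(u_j\otimes u_j,s)$ combined with the spectral large-sieve for the Picard group ensure convergence for $\Re(s)>3/2$, making the term-by-term manipulation with Kuznetsov rigorous. Once these interchanges are in place, the identity is a direct bookkeeping exercise built on the Dedekind zeta factorisation \eqref{sigma series}.
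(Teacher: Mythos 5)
Your proposal is correct and is exactly the paper's argument: the paper's proof of this lemma is the one-line statement "Applying the Kuznetsov trace formula \eqref{Kuznetsov formula} and using \eqref{sigma series}, we prove the lemma," and your write-up simply fills in the same bookkeeping (diagonal case $m=n$ of Kuznetsov, summation against $|n|^{-2s}$, the identity $\sum_{n\neq0}|n|^{-2s}=4\zeta_{\kk}(s)$ for the delta term, and \eqref{sigma series} for the Eisenstein term). The convergence justifications you supply are consistent with what the paper relies on (Lemma \ref{lem abs convergent} for $\Sigma(s)$ and absolute convergence of the Dirichlet series for $\Re(s)>3/2$).
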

\begin{proof}
Applying the Kuznetsov trace formula \eqref{Kuznetsov formula} and using \eqref{sigma series}, we prove the lemma.
\end{proof}

\subsection{Technical preparation}

For $\Re{s}>3/2$ the absolute convergence (see Lemma \ref{lem abs convergent}) allows us to change the order of summation in \eqref{sigmadef}. Using \eqref{K transform def} and \eqref{K-fun repsentation2}, we obtain
\begin{multline}\label{Sigma2}
\Sigma(s)=\frac{8}{\pi^2}
\sum_{q\neq0}\frac{1}{|q|^2}\sum_{n\neq0}\frac{S(n,n;q)}{|n|^{2s}}
\mathop{{\sum}^*}_{m=0}^{\infty}(-1)^m\\\times
\left(\left(\frac{n|q|}{|n|q}\right)^{2m}+\left(\frac{n|q|}{|n|q}\right)^{-2m}\right)
\int_0^{\pi/2}\psi\left(m,\tau;\frac{2\pi |n|}{|q|}\right)d\tau,
\end{multline}
where for a positive integer $m$,  $0<\tau<\pi/2$ and $r\in \R$
\begin{equation}\label{psi def}
\psi(m,\tau;z)=\frac{1}{2}\int_{-\infty}^{\infty}r^2h(r)\cosh(\pi r)g(m,r,\tau;z)dr,
\end{equation}
\begin{equation}\label{g def}
g(m,r,\tau;z)=J_{2m}(2z\sin\tau)K_{2ir}(2z\cos\tau).
\end{equation}

This subsection is devoted to the technical preparation for the proof of the analytic continuation of \eqref{Sigma2} to the region containing $\Re{s}=1/2$.
In particular, we investigate various properties of the function $\psi(m,\tau;z)$ defined by \eqref{psi def}.

\begin{lem}\label{Mellin lemma} For $0>a>-2m-2N-2$ we have
\begin{equation}\label{Mellin for psi}
\psi(m,\tau;z)=\frac{1}{2\pi i}\int_{(a)}\frac{h^{*}(m,\tau;s)}{\cos(\pi s/2)}(z\cos\tau)^{-s}ds,
\end{equation}
where
\begin{multline}\label{h star def}
h^{*}(m,\tau;s)=2\pi i\frac{(-1)^m\sin^m\tau}{2^{4+m}\cos^{2m}\tau}
\int_{-\infty}^{\infty}\frac{r^2h(r)\coth(\pi r)\Gamma(m+s/2+ir)}{\Gamma(1-m-s/2+ir)\Gamma(1+2m)}\\
\times F(m+s/2+ir,m+s/2-ir,1+2m;-\tan^2\tau)dr.
\end{multline}
\end{lem}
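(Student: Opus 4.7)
The plan is to derive \eqref{Mellin for psi} by computing the Mellin transform of $\psi(m,\tau;z)$ in the variable $z$, applying Mellin inversion, and then reshuffling the resulting integrand via Gamma-function reflection; this mirrors the strategy used by Motohashi \cite{Mot1992} for the two-dimensional second-moment formula.

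The first step is to evaluate $\tilde\psi(s):=\int_0^\infty \psi(m,\tau;z)z^{s-1}dz$ for $\Re s$ sufficiently large. Interchanging the $r$- and $z$-integrations (justified by absolute convergence: the Gaussian decay of $h$, the $z^{2m}$ behaviour of $J_{2m}(2z\sin\tau)$ near zero, and the exponential decay of $K_{2ir}(2z\cos\tau)$ at infinity) reduces the inner $z$-integral to the classical evaluation (Gradshteyn-Ryzhik 6.576.4)
\[
\int_0^\infty J_{2m}(2z\sin\tau)K_{2ir}(2z\cos\tau)z^{s-1}dz = \frac{(\sin\tau)^{2m}\Gamma(m+s/2+ir)\Gamma(m+s/2-ir)}{4(\cos\tau)^{s+2m}\Gamma(1+2m)}\,F\!\left(m+\tfrac{s}{2}+ir,m+\tfrac{s}{2}-ir;1+2m;-\tan^2\tau\right),
\]
valid for $\Re s>-2m$. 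This produces an explicit expression for $\tilde\psi(s)$ as an $r$-integral of Gamma and hypergeometric factors. Mellin inversion then yields $\psi(m,\tau;z)=(2\pi i)^{-1}\int_{(c)}\tilde\psi(s)z^{-s}ds$, and writing $z^{-s}=(z\cos\tau)^{-s}(\cos\tau)^{s}$ absorbs the extra $(\cos\tau)^{s}$ into $\tilde\psi$ so that the integral takes the shape \eqref{Mellin for psi}.

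To arrive at the precise form of $h^*(m,\tau;s)$ in \eqref{h star def}, featuring the quotient $\Gamma(m+s/2+ir)/\Gamma(1-m-s/2+ir)$, the weight $\coth(\pi r)$, and the factor $\cos(\pi s/2)$ in the denominator, I apply the reflection identity
\[
\Gamma(m+\tfrac{s}{2}-ir)\,\Gamma(1-m-\tfrac{s}{2}+ir)=\frac{(-1)^m\pi}{\sin(\pi s/2-\pi ir)}
\]
to trade $\Gamma(m+s/2-ir)$ for the desired quotient up to a trigonometric factor, and then symmetrize the $r$-integrand under $r\mapsto -r$ (legitimate since $h$ is even and $F$ is symmetric in its first two parameters). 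The identity
\[
\sin(\pi s/2-\pi ir)-\sin(\pi s/2+\pi ir)=-2i\cos(\pi s/2)\sinh(\pi r)
\]
extracts the $\cos(\pi s/2)$ factor into the denominator and, together with $\coth(\pi r)=\cosh(\pi r)/\sinh(\pi r)$, converts the $\cosh(\pi r)$ weight into $\coth(\pi r)$, delivering \eqref{h star def} after collecting the powers of $\sin\tau$, $\cos\tau$ and $2$.

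The final task is to shift the Mellin contour from $\Re s$ large positive, where the above manipulations converge absolutely, down to the range $-2m-2N-2<a<0$ stated in the lemma. As the contour crosses $\Re s=-2m$, one encounters potential poles of $\Gamma(m+s/2\pm ir)$ at $s=-2m-2j$ for $j\ge0$ (or, after an auxiliary shift of the $r$-contour, at the corresponding half-integer shifts); these are in exact correspondence with the zeros $h(\pm ni)=h(\pm(n-1/2)i)=0$ for $n=1,\dots,N$, so they are annihilated and no residues appear. The depth $-2m-2N-2$ is precisely the extent to which this cancellation reaches. Uniform convergence for the shift comes from Stirling's formula \eqref{Stirling2} applied to the Gamma quotient together with the super-polynomial decay \eqref{conditions on h 2} of $h$; the hypergeometric factor is bounded directly when $\tau$ is away from $\pi/2$, and via a Pfaff or Euler transformation when it is not. \textbf{The main obstacle} is this final contour shift: one must carefully track the joint pole structure of the $(r,s)$-integrand, verify that the prescribed zeros of $h$ cancel every singularity encountered, and establish the uniform estimates in $(r,s)$ that legitimize the deformation.
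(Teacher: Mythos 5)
Your first two steps coincide with the paper's proof: the Mellin transform of $J_{2m}(2z\sin\tau)K_{2ir}(2z\cos\tau)$ via Gradshteyn--Ryzhik, Mellin inversion for $a>-2m$, and then the reflection/symmetrization step, which is exactly Motohashi's identity \eqref{Gamma transform} derived from scratch (your computation of it is correct). The genuine gap is in the final contour shift. After the reflection step the integrand of \eqref{Mellin for psi} carries the factor $1/\cos(\pi s/2)$, whose poles at the odd negative integers $s=-(1+2j)$ are the actual obstruction to moving the contour from $a>-2m$ down to $a>-2m-2N-2$; your discussion never mentions them, and instead attributes the difficulty to poles of $\Gamma(m+s/2\pm ir)$ at $s=-2m-2j$ being "annihilated" by the zeros of $h$. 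That is not how the cancellation works, and it omits the crux of the lemma: one must prove the vanishing $h^{*}(m,\tau;\pm(1+2j))=0$ for $0\le j\le N+m$ (the paper's \eqref{h star zero2}), which the paper obtains by sliding the $r$-contour back to the real axis at those special values of $s$ and observing that the resulting integrand is odd in $r$. Without this vanishing, residues at $s=-1,-3,\dots$ would appear and \eqref{Mellin for psi} would acquire extra terms.

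The roles of the two families of zeros of $h$ are also more structured than your "exact correspondence" suggests: $h(\pm ni)=0$ is used to cancel the poles of $\coth(\pi r)$ when the $r$-contour in \eqref{h star def} is pushed down to $\Im(r)=-C$, $0<C<N+1$ (this is what continues $h^{*}(m,\tau;s)$ analytically to $\Re(s)>-2m-2N-2$, and it is here that the reflection is essential, since it places all Gamma poles in a single half-plane of the $r$-variable); whereas $h(\pm(n-1/2)i)=0$ is needed when the $r$-contour is moved back to the real axis in the oddness argument, to avoid the half-integer poles of $\Gamma(m+s/2+ir)$ that arise when $s$ is an odd integer. So while your outline reaches the correct form of $h^{*}$, the step you yourself flag as "the main obstacle" is resolved by a mechanism you have not identified, and as written the deformation argument does not go through.
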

\begin{proof}
For $\Re(s)>-2m$, using \cite[6.576.3]{GR}, we compute the Mellin transform of the function $g(m,r,\tau;z)$ with respect to $z$
\begin{multline}\label{g Mellin}
\hat{g}(m,r,\tau;s)=\frac{\sin^m\tau}{2^{2+m}\cos^{2m+s}\tau}
\frac{\Gamma(m+s/2+ir)\Gamma(m+s/2-ir)}{\Gamma(1+2m)}\\
\times F(m+s/2+ir,m+s/2-ir,1+2m;-\tan^2\tau).
\end{multline}
Applying the Mellin inversion formula, we obtain for $a>-2m$
\begin{multline}\label{Mellin for psi2}
\psi(m,\tau;z)=\frac{1}{2}\int_{-\infty}^{+\infty}r^2h(r)\cosh{(\pi r)}\frac{1}{2\pi i}\int_{(a)}\hat{g}(m,r,\tau;s)z^{-s}dsdr\\=
\frac{1}{2\pi i}\Biggl(\frac{1}{2}\int_{-\infty}^{+\infty} r^2h(r)\cosh{(\pi r)}\hat{g}(m,r,\tau;s)dr\Biggr)z^{-s}ds
\\ =
\frac{1}{2\pi i}\int_{(a)}
\frac{h^{*}(m,\tau;s)}{\cos(\pi s/2)}(z\cos\tau)^{-s}ds,
\end{multline}
where
\begin{multline}\label{h star def2}
h^{*}(m,\tau;s)=\frac{\sin^m\tau\cos(\pi s/2)}{2^{3+m}\cos^{2m}\tau}
\int_{-\infty}^{\infty}r^2h(r)\cosh(\pi r)
\frac{\Gamma(m+s/2+ir)\Gamma(m+s/2-ir)}{\Gamma(1+2m)}\\
\times F(m+s/2+ir,m+s/2-ir,1+2m;-\tan^2\tau)dr.
\end{multline}
Using \cite[(2.10)]{Mot1992} we have
\begin{multline}\label{Gamma transform}
\Gamma(m+s/2+ir)\Gamma(m+s/2-ir)=
\frac{(-1)^m\pi i}{2\sinh(\pi r)\cos(\pi s/2)}\\
\times\left(\frac{\Gamma(m+s/2+ir)}{\Gamma(1-m-s/2+ir)}-\frac{\Gamma(m+s/2-ir)}{\Gamma(1-m-s/2-ir)}\right).
\end{multline}
Substituting \eqref{Gamma transform} to \eqref{h star def2}, we obtain \eqref{h star def}. In view of \eqref{conditions on h 1}, we can move the line of integration in \eqref{h star def} to $\Im(r)=-C$ with $0<C<N+1$. This yields the analytic continuation of the function $h^{*}(m,\tau;s)$ to the region $\Re(s)>-2m-2N-2.$ To be able to move the line of integration in \eqref{Mellin for psi2} to $a>-2m-2N-2$, and thus to prove the lemma, it is sufficient to show that
\begin{equation}\label{h star zero2}
h^{*}(m,\tau;\pm(1+2j))=0\quad\hbox{for}\quad 0\le j\le N+m.
\end{equation}
Let us consider the case $-(1+2j).$ Accordingly,
\begin{multline}\label{h star j}
h^{*}(m,\tau;-(1+2j))=\frac{(-1)^m\sin^m\tau}{2^{4+m}\cos^{2m}\tau}2\pi i\\\times
\int_{\Im(r)=-C}\frac{r^2h(r)\coth(\pi r)\Gamma(m-1/2-j+ir)}{\Gamma(1-m+1/2+j+ir)\Gamma(1+2m)}\\
\times F(m-1/2-j+ir,m-1/2-j-ir,1+2m;-\tan^2\tau)dr.
\end{multline}
Using the property \eqref{conditions on h 1}, we move the line of integration in \eqref{h star j} back to $\Im(r)=0$  and remark that the function under the integral sign is odd. This proves \eqref{h star zero2}.
\end{proof}


The next lemma guarantees the convergence of the integral \eqref{Mellin for psi} for all bounded $a$ if $m$ is a constant and for $a<0$ if $m\rightarrow\infty.$ In fact, it is possible to prove the convergence of the integral \eqref{Mellin for psi} for all bounded $a$ when $m\rightarrow\infty.$ However, this is not required for our purposes.


\begin{lem}\label{hstar-estimate}
Let $s/2=\sigma+it.$  Suppose that $m$ is a positive integer and that the number $0<\tau<\pi/2$ is fixed. For  fixed $m$ and $\sigma$  we have
\begin{equation}\label{hstar-estimate small m}
\frac{h^{*}(m,\tau;s)}{\cos(\pi s/2)}\ll(1+|t|)^{-A}\text{ for any }A>1.
\end{equation}

For a fixed $\sigma$ and $m\rightarrow\infty$ we have
\begin{equation}\label{hstar-estimate big m}
\frac{h^{*}(m,\tau;s)}{\cos(\pi s/2)}\ll B^{-m}(1+|t|)^{2\sigma-1},
\end{equation}
where $B>1$ is some constant.
\end{lem}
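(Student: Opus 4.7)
The plan is to analyse \eqref{h star def} by combining Stirling's formula for the Gamma factors with estimates for the Gauss hypergeometric, obtained via Euler's integral representation and, in the large-$m$ case, a Laplace-type saddle-point analysis. The decay \eqref{conditions on h 2} of $h(r)$ will localise the $r$-integration to a bounded range throughout.

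\textbf{First estimate} (fixed $m$, $|t|\to\infty$): On the real $r$-line, the integrand is concentrated on $|r|\ll 1$. Stirling \eqref{Stirling2} applied to the Gamma ratio, with the exponential factors $\exp(-\pi|t+r|/2)$ and $\exp(-\pi|t-r|/2)$ from the two Gammas cancelling, gives
\[
\left|\frac{\Gamma(m+s/2+ir)}{\Gamma(1-m-s/2+ir)}\right|\ll (1+|t|)^{2m+2\sigma-1}.
\]
The hypergeometric factor is bounded polynomially in $|t|$ by applying the reflection trick below and Stirling to the resulting product of Gammas. Since $|\cos(\pi s/2)|\asymp e^{\pi|t|}/2$, dividing yields exponential decay $e^{-\pi|t|}$ in $|t|$, which swamps the polynomial growth and gives decay faster than any $(1+|t|)^{-A}$.

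\textbf{Second estimate} (large $m$): First apply the reflection formula $\Gamma(z)\Gamma(1-z)=\pi/\sin(\pi z)$ to rewrite
\[
\frac{\Gamma(m+s/2+ir)}{\Gamma(1-m-s/2+ir)}=\frac{(-1)^m\sin(\pi(s/2-ir))}{\pi}\Gamma(m+s/2+ir)\Gamma(m+s/2-ir).
\]
Stirling then gives $\Gamma(m+s/2+ir)\Gamma(m+s/2-ir)/\Gamma(1+2m)\sim\sqrt{\pi/m}\cdot 4^{-m}m^{s-1}$ for bounded $r$, while $\sin(\pi(s/2-ir))/\cos(\pi s/2)=O(1)$ since both have modulus $\asymp e^{\pi|t|}/2$ for $|t|$ large. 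Next apply the Euler transformation $F(a,b,c;z)=(1-z)^{-a}F(a,c-b,c;z/(z-1))$ with $z=-\tan^2\tau$, producing the modulus factor $\cos^{2m+2\sigma}\tau$ and reducing the problem to estimating $F(m+s/2+ir,1+m-s/2+ir,1+2m;\sin^2\tau)$. Apply Laplace's method to its Euler integral representation: the saddle of the modulus of the integrand is at $x_0=1/(1+\cos\tau)\in(0,1)$, at which the integrand takes the value $(1+\cos\tau)^{-2m}$ (modulo bounded factors), giving, together with $\Gamma(c)/(\Gamma(b)\Gamma(c-b))\sim\sqrt{m/\pi}\cdot 4^m$, a bound of order $4^m(1+\cos\tau)^{-2m}$. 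Multiplying the outer coefficient $\sin^m\tau/(2^{4+m}\cos^{2m}\tau)$ from \eqref{h star def}, the Gamma contribution $4^{-m}$, the modulus $\cos^{2m+2\sigma}\tau$ and the hypergeometric bound, all $4^{\pm m}$ factors cancel and the angular content reduces to $(\sin\tau/(2(1+\cos\tau)^2))^m\cos^{2\sigma}\tau$. A direct calculation shows that $g(\tau)=\sin\tau/(2(1+\cos\tau)^2)$ is strictly increasing on $(0,\pi/2)$ with supremum $1/2$, so $g(\tau)<1/2$ for each fixed $\tau\in(0,\pi/2)$, and the bound takes the form $B^{-m}(1+|t|)^{2\sigma-1}$ with $B=1/g(\tau)>2$.

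\textbf{Main obstacle.} The delicate point is the Laplace analysis of the hypergeometric with complex parameters whose imaginary parts depend on $t$ and $r$: the saddle is determined by the real parts and lies strictly inside $(0,1)$, and one has to verify that the complex shift of the saddle is small enough that the oscillations contribute only polynomial corrections absorbed into $(1+|t|)^{2\sigma-1}$. Equally, tracking the polynomial factor itself requires care---it arises from $|m^{s-1}|=m^{2\sigma-1}$ (absorbed into $B^{-m}$ by a slight decrease of $B$) together with Stirling corrections in the regime $|t|\gtrsim m$, where $m^{2\sigma-1}$ is effectively replaced by $|t|^{2\sigma-1}$ and the additional exponential decay in $|t|$ from the Gamma quotient improves the bound further.
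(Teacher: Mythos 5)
Your first estimate contains a genuine gap: the claim that the hypergeometric factor in \eqref{h star def} is ``bounded polynomially in $|t|$'' is false. Both upper parameters of $F(m+s/2+ir,m+s/2-ir,1+2m;-\tan^2\tau)$ have imaginary part $\approx t$, and in this regime the function is exponentially large: applying the very transformation you use in the second part and then the Mehler--Dirichlet integral for the resulting conical-type function shows it grows like $e^{2\tau|t|}$, and any absolute-value estimate of its Euler or Mellin--Barnes representation produces the factor $1/|\Gamma(1+m-s/2+ir)\Gamma(m+s/2-ir)|\asymp e^{\pi|t-r|}$. This exponential growth cancels the $e^{-\pi|t|}$ you extract from $1/\cos(\pi s/2)$ --- exactly as the $e^{\pi|t|}$ of $\sin(\pi(s/2-ir))$ does in your own second estimate --- so the bookkeeping leaves only a fixed power of $(1+|t|)$, not decay of arbitrary order $A$. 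To reach $(1+|t|)^{-A}$ for every $A$ one must exploit the oscillation: the paper writes $F=\frac{\Gamma(1+2m)}{\Gamma(m+s/2+ir)\Gamma(m+s/2-ir)}J$ with $J$ the Mellin--Barnes integral \eqref{2F1 Mellin J}, shifts the $J$-contour to $\Re(z)=c$ with $c$ arbitrarily large, and shows $J\ll e^{-\pi|r|}(1+|t|)^{-1-2m-2c+\epsilon}$; an equivalent fix would be repeated integration by parts in the Euler integral after verifying the phase has no stationary point. As written, your argument does neither, so \eqref{hstar-estimate small m} is not established.

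Your second estimate is essentially sound and takes a genuinely different route from the paper, which splits into $\tan^2\tau>1/3$ (Mellin--Barnes contour moved to $\Re(z)=-m+c$) and $\tan^2\tau<1/3$ (Euler integral of the untransformed $F$); your single argument via reflection, the linear transformation to argument $\sin^2\tau$, and the Euler integral covers all of $0<\tau<\pi/2$ at once and yields the slightly sharper constant $B=2(1+\cos\tau)^2/\sin\tau>2$. Two remarks: no saddle-point analysis of the complex phase is actually needed for an upper bound --- taking absolute values inside the Euler integral already gives the maximum $(1+\cos\tau)^{-2m}$ of the real part of the integrand; and the uniformity in $t$ you flag as the main obstacle becomes immediate once you note that the $\Gamma(m+s/2-ir)$ produced by reflection cancels against the same factor in the Euler prefactor $\Gamma(1+2m)/\bigl(\Gamma(1+m-s/2+ir)\Gamma(m+s/2-ir)\bigr)$, leaving $|\Gamma(m+s/2+ir)/\Gamma(1+m-s/2+ir)|\asymp(m+|t|)^{2\sigma-1}$ with no exponential in $t$ at all.
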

\begin{proof}
Let $y=\tan^2\tau.$ We first consider the case of fixed $m.$ Applying \cite[15.6.6]{HMF} we have
\begin{multline}\label{2F1 Mellin1}
F(m+s/2+ir,m+s/2-ir,1+2m;-y)=\\
\frac{\Gamma(1+2m)}{\Gamma(m+s/2+ir)\Gamma(m+s/2-ir)}
J(y,m,r,s)
\end{multline}
with
\begin{multline}\label{2F1 Mellin J}
J(y,m,r,s)=\frac{1}{2\pi i}\int_{(l)}\frac{\Gamma(m+s/2+ir+z)\Gamma(m+s/2-ir+z)\Gamma(-z)}{\Gamma(1+2m+z)}y^{z}dz,
\end{multline}
where $l$ is a line that separates the poles of $\Gamma(m+s/2\pm ir+z)$ and $\Gamma(-z).$ Substituting \eqref{2F1 Mellin1} to \eqref{h star def}, we obtain
\begin{multline}\label{hstar-estimate1}
\frac{h^{*}(m,\tau;s)}{\cos(\pi s/2)}\ll
\left(\frac{y(1+y)}{4}\right)^{m/2}\\\times
\int_{-\infty}^{\infty}
\frac{r^2h(r)\coth(\pi r)|J(y,m,r,s)|dr}{|\Gamma(1-m-s/2+ir)\Gamma(m+s/2-ir)\cos(\pi s/2)|}.
\end{multline}
Next, we apply \cite[5.5.3]{HMF}, getting
\begin{multline}\label{hstar-estimate2}
\frac{h^{*}(m,\tau;s)}{\cos(\pi s/2)}\ll
\left(\frac{y(1+y)}{4}\right)^{m/2}\\\times
\int_{-\infty}^{\infty}
r^2h(r)\coth(\pi r)\exp(-\pi(|t-r|-|t|))|J(y,m,r,s)|dr.
\end{multline}
For a fixed $m$ we can ignore the multiple $(y(1+y)/4)^{m/2}.$ The property \eqref{conditions on h 2} for the function $h(r)$ allows us to restrict the integral over $r$ to the neighborhood of zero. Thus it is sufficient to consider only $|r|\ll |t|^{\epsilon}. $  In order to estimate $J(y,m,r,s)$ for such $r$, we move the contour of integration in \eqref{2F1 Mellin J} to the right of the line $\Re(z)=c$ such that $m+\sigma+c>0$. Making the change of variable $z=c+iv$ in the integral, we obtain
\begin{multline}\label{2F1 Mellin2}
J(y,m,r,s)=\sum_{0\le j\le c}\frac{(-1)^jy^{j}\Gamma(m+s/2+ir+j)\Gamma(m+s/2-ir+j)}{j!\Gamma(1+2m+j)}+\\
O\left(\int_{-\infty}^{\infty}
y^{c}\frac{\Gamma(m+\sigma+c+i(t+r+v))\Gamma(m+\sigma+c+i(t-r+v))\Gamma(-c-iv)}{\Gamma(1+2m+c+iv)}dv
\right).
\end{multline}
It follows from \eqref{Stirling2} that the main contribution in the $v$ integral comes from the interval $(-t-|r|,-t+|r|).$ To estimate the sum over $j$ in \eqref{2F1 Mellin2}, we again use \eqref{Stirling2} and prove for $|r|\ll |t|^{\epsilon}$ the estimate
\begin{equation}\label{2F1 Mellin3}
J(y,m,r,s)\ll
\exp(-\pi|r|)(1+|t|)^{-1-2m-2c+\epsilon}.
\end{equation}
Substituting \eqref{2F1 Mellin3} to \eqref{hstar-estimate2}, we obtain \eqref{hstar-estimate big m}.
\par
Let us consider the case of $m\rightarrow\infty.$ In that case we cannot ignore the multiple $(y(1+y)/4)^{m/2}$ in \eqref{hstar-estimate2}. Consequently, we move the contour of integration in \eqref{2F1 Mellin J} to the left of the line $\Re(z)=-m+c$ without crossing any pole. Making the change of variables $z=-m+c+iv$, we obtain
\begin{multline}\label{2F1 Mellin4}
J(y,m,r,s)\ll \\ y^{-m}
\int_{-\infty}^{\infty}
\frac{\Gamma(\sigma+c+i(t+r+v))\Gamma(\sigma+c+i(t-r+v))\Gamma(m-c-iv)}{\Gamma(1+m+c+iv)}dv.
\end{multline}
Note that the formula \eqref{Stirling2} shows that the main contribution in the $v$ integral comes once again from the interval $(-t-|r|,-t+|r|).$ Since
\begin{equation*}
\Gamma(m-c-iv)/\Gamma(1+m+c+iv)\ll(m+|v|)^{-1-2c},
\end{equation*}
 we obtain for $|r|\ll |t|^{\epsilon}$ the estimate
\begin{equation}\label{2F1 Mellin5}
J(y,m,r,s)\ll y^{-m}\exp(-\pi|r|)(m+|t|)^{-1-2c+\epsilon}.
\end{equation}
Application of \eqref{2F1 Mellin5} and \eqref{hstar-estimate2}  yields
\begin{equation}\label{hstar-estimate3}
\frac{h^{*}(m,\tau;s)}{\cos(\pi s/2)}\ll
\left(\frac{(1+y)}{4y}\right)^{m/2}(m+|t|)^{-A}
\end{equation}
for any constant $A>1$. This implies \eqref{hstar-estimate big m} as long as $y>1/3.$  

The case $y<1/3$ can be treated similarly but we prefer to follow a simpler approach based on the Euler integral representation formula \cite[15.6.1]{HMF} for the hypergeometric function in \eqref{h star def}.  Applying this formula, we obtain
\begin{multline}\label{hstar-estimate4}
\frac{h^{*}(m,\tau;s)}{\cos(\pi s/2)}\ll
\left(\frac{y(1+y)}{4}\right)^{m/2}\\\times
\int_{-\infty}^{\infty}
\frac{r^2h(r)\coth(\pi r)|E(y,m,r,s)|dr}{|\Gamma(1-m-s/2+ir)\Gamma(m+1-s/2-ir)\cos(\pi s/2)|},
\end{multline}
where
\begin{equation}\label{E integral1}
E(y,m,r,s)=\int_0^1\frac{x^{m+s/2-1+ir}(1-x)^{m-s/2-ir}}{(1+xy)^{m+s/2-ir}}dx.
\end{equation}
Next, we estimate the  integral above by absolute value and use \cite[5.5.5]{HMF} to show that
\begin{multline}\label{E integral2}
E(y,m,r,s)\ll \int_0^1x^{m+\sigma-1}(1-x)^{m-\sigma}dx=\frac{\Gamma(m+\sigma)\Gamma(m-\sigma+1)}{\Gamma(2m+1)}\\
\ll m^{-1/2}2^{-2m}.
\end{multline}
Substituting \eqref{E integral2} to \eqref{hstar-estimate4} and applying \cite[5.5.3]{HMF} we have
\begin{multline}\label{hstar-estimate5}
\frac{h^{*}(m,\tau;s)}{\cos(\pi s/2)}\ll
\left(\frac{y(1+y)}{64}\right)^{m/2}\\\times
\int_{-\infty}^{\infty}
\frac{\Gamma(m+s/2-ir)}{\Gamma(m+1-s/2-ir)}
r^2h(r)\coth(\pi r)\exp(\pi(|t-r|-|t|))dr.
\end{multline}
Using the Stirling formula  \eqref{Stirling1} and the property \eqref{conditions on h 2} of the function $h(r)$, we finally obtain the estimate
\begin{equation}\label{hstar-estimate6}
\frac{h^{*}(m,\tau;s)}{\cos(\pi s/2)}\ll
\left(\frac{y(1+y)}{64}\right)^{m/2}(1+|t|)^{2\sigma-1},
\end{equation}
which proves \eqref{hstar-estimate big m}.
\end{proof}
\begin{cor}  For a positive integer  $m$ we have
\begin{equation}\label{psi estimate}
\psi(m,\tau;z)\ll B^{-m}\min(|z|^{A},|z|^{\epsilon}),
\end{equation}
where $A$ and $B$ are some constants such that $0<A<2m+2N+2$ and $B>1$.
\end{cor}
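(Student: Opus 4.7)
The plan is to derive both bounds by shifting the contour in the Mellin representation
\begin{equation*}
\psi(m,\tau;z)=\frac{1}{2\pi i}\int_{(a)}\frac{h^{*}(m,\tau;s)}{\cos(\pi s/2)}(z\cos\tau)^{-s}\,ds
\end{equation*}
furnished by Lemma \ref{Mellin lemma}, and controlling the integrand via the size estimates in Lemma \ref{hstar-estimate}. The argument $z$ will be the positive real number $2\pi|n|/|q|$ wherever this bound is applied (see \eqref{Sigma2}), so $|(z\cos\tau)^{-s}|=(z\cos\tau)^{-\Re s}$ and the factor $(\cos\tau)^{-\Re s}$ is harmless for $\tau$ in a fixed compact subinterval of $(0,\pi/2)$.

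To obtain $\psi(m,\tau;z)\ll B^{-m}|z|^{\epsilon}$, I would take $a=-\epsilon$. Then $|(z\cos\tau)^{-s}|\le |z|^{\epsilon}$, and on this line $\sigma=-\epsilon/2<0$, so \eqref{hstar-estimate big m} bounds the integrand by $B^{-m}(1+|t|)^{-1-\epsilon}$, which is absolutely integrable in $t$. To obtain $\psi(m,\tau;z)\ll B^{-m}|z|^{A}$ for $0<A<2m+2N+2$, I would shift further left to $a=-A$; this displacement is permissible because Lemma \ref{Mellin lemma} has already established the analytic continuation of $h^{*}(m,\tau;s)$ to $\Re s>-2m-2N-2$, and by \eqref{h star zero2} its zeros at $s=\pm(1+2j)$ cancel the poles of $1/\cos(\pi s/2)$ in that range, so no residues are collected. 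On the new line $|(z\cos\tau)^{-s}|\le |z|^{A}$, and with $\sigma=-A/2$ strongly negative the resulting $t$-integral converges trivially by \eqref{hstar-estimate big m}.

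The only subtlety is to combine the two regimes of Lemma \ref{hstar-estimate} into a single bound uniform in $m\ge 1$: the geometric decay $B^{-m}$ is explicit only as $m\to\infty$, whereas for $m$ in a bounded range the fixed-$m$ estimate \eqref{hstar-estimate small m} delivers rapid decay in $t$ but with an $m$-dependent implicit constant. This is handled by replacing $B$ by a slightly smaller $B'>1$, which absorbs the finitely many problematic small-$m$ constants into the geometric factor $(B')^{-m}$. No other obstacle is present; apart from this bookkeeping, the argument is a routine contour shift.
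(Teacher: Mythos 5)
Your proposal is correct and follows essentially the paper's own route: the corollary is obtained directly from the Mellin representation \eqref{Mellin for psi} (valid on any line $0>a>-2m-2N-2$) together with the bounds of Lemma \ref{hstar-estimate}, choosing $a=-\epsilon$ for one bound and $a=-A$ for the other. Your remark about patching the fixed-$m$ and large-$m$ regimes by shrinking $B$ is a reasonable piece of bookkeeping that the paper leaves implicit.
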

\begin{proof}
This is a direct consequence of \eqref{Mellin for psi} and Lemma \ref{hstar-estimate}.
\end{proof}

\subsection{Analytic continuation to the region $\Re{s}>1/2$}

For $n\neq0,\,0<\tau<\pi/2$ and $\Re(s)>1/2$ define
\begin{multline}\label{I integral def}
I(n,\tau,s):=|n|^{2s-2}2^{1-2s}\mathop{{\sum}^*}_{m=0}^{\infty}
\left(\left(\frac{n}{|n|}\right)^{2m}+\left(\frac{n}{|n|}\right)^{-2m}
\right)\\\times
\frac{1}{2\pi i}\int_{(a)}\frac{h^{*}(m,\tau;w)}{\cos(\pi w/2)}(\cos\tau)^{-w}
\frac{\Gamma(1-s-w/2+m)}{\Gamma(s+w/2+m)}
\left(\frac{|n|}{2}\right)^wdw,
\end{multline}
where $a<2(1-\Re(s)+m).$ Note that Lemma \ref{hstar-estimate} and the Stirling formula \eqref{Stirling2} show that the sum over $m$ and the integral over $w$ are absolutely convergent provided that $\Re(s)>1/2.$ For $0<\tau<\pi/2$ and $\Re(s)>1/2$ define
\begin{equation}\label{I0 integral def}
I(0,\tau,s):=
-\frac{h^{*}(0,\tau;2-2s)}{2(\cos\tau)^{2-2s}\cos(\pi s)}.
\end{equation}


\begin{lem}\label{Sigma Lemma}  For $\Re(s)>1/2$ we have
\begin{multline}\label{Sigma1}
\Sigma(s)=
\frac{8(2\pi)^{2s-1}}{\pi^2}
\int_0^{\pi/2}
\frac{\zeta_{\kk}(s)}{\zeta_{\kk}(2s)}\mathscr{L}_{\kk}(s;-4)
I(0,\tau,s)d\tau+\\
\frac{8(2\pi)^{2s-1}}{\pi^2}
\int_0^{\pi/2}
\frac{\zeta_{\kk}(s)}{\zeta_{\kk}(2s)}\sum_{n\neq0}\mathscr{L}_{\kk}(s;\bar{n}^2-4)
I(n,\tau,s)d\tau.
\end{multline}

\end{lem}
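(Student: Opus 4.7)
The plan is to start from the expression \eqref{Sigma2} for $\Sigma(s)$, which is valid for $\Re(s)>3/2$, substitute the Mellin representation \eqref{Mellin for psi} for $\psi(m,\tau;2\pi|n|/|q|)$, and interchange the order of summations and integrations. Absolute convergence in this region is ensured by Lemma \ref{lem abs convergent} combined with the decay estimates of Lemma \ref{hstar-estimate}. After this rearrangement, $\Sigma(s)$ becomes an iterated $(\tau,w)$-integral whose integrand contains, for each $m\ge 0$, the double Dirichlet series
\[
\sum_{q\neq 0}\frac{(|q|/q)^{\pm 2m}}{|q|^{2-w}}\sum_{n\neq 0}\frac{S(n,n;q)\,(n/|n|)^{\pm 2m}}{|n|^{2s+w}}.
\]

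The next step is to open the Kloosterman sum, $S(n,n;q)=\sum_{a\,(q),(a,q)=1}e[n(a+a^*)/q]$ with $aa^*\equiv 1\pmod q$, and to apply the functional equation for the Lerch zeta function (Lemma \ref{Lerch lemma}) to the inner sum over $n$. This produces the Lerch series $\zeta_{\kk}(1-s-w/2;\pm 2m,\overline{(a+a^*)/q})$, weighted by precisely the gamma ratio $\Gamma(1-s-w/2+m)/\Gamma(s+w/2+m)$ visible in \eqref{I integral def}. I then expand the Lerch zeta via its series definition, introducing a dual variable $l\in\Z[i]$. A change of variables aligning residues modulo $\bar q$ with the $a$-sum, combined with the counting identity from the proof of \eqref{eq:sumofklsums} (which converts the number of $a\pmod q$ satisfying $a+a^*\equiv -\bar l\pmod q$ into $\rho_q(\bar l^2-4)$), collapses the $q$-sum into $\frac{\zeta_{\kk}(s)}{\zeta_{\kk}(2s)}\mathscr{L}_{\kk}(s;\bar l^2-4)$ via the definition \eqref{L beatiful def}. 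Relabeling $l$ as $n$ and gathering the hypergeometric prefactors together with the constant $\pi^{2s+w-1}$ from the Lerch functional equation and $(2\pi)^{-w}$ from the Mellin transform of $\psi$ reproduces the second summand in \eqref{Sigma1}, with $I(n,\tau,s)$ matching \eqref{I integral def}.

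The first summand in \eqref{Sigma1} arises from the exceptional case $m=0$ together with those residues $a\pmod q$ satisfying $a+a^*\equiv 0\pmod q$. For these the Lerch parameter $\xi$ vanishes and $\zeta_{\kk}(1-s-w/2;0,0)=4\zeta_{\kk}(1-s-w/2)$ has a simple pole at $w=-2s$. Shifting the $w$-contour across this pole generates a residue contribution that, after the Kloosterman counting is performed, yields $\mathscr{L}_{\kk}(s;-4)$ weighted precisely by $I(0,\tau,s)$ as defined in \eqref{I0 integral def}; this accounts for the specific evaluation of $h^{*}$ at $w=2-2s$ after the gamma factor is absorbed.

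The principal obstacle is the coordinated management of the analytic continuation in $s$, the movement of the $w$-contour (needed both to validate the series expansion of the Lerch zeta function and to cross its pole, with the admissible range $a<2(1-\Re(s)+m)$ depending on $m$), and the convergence of the outer series in $m$ and $n$. The exponential decay $B^{-m}$ from Lemma \ref{hstar-estimate} controls the $m$-sum, while the polynomial subconvexity bound \eqref{Lbeaut subconvex} for $\mathscr{L}_{\kk}(s;\bar n^2-4)$, combined with the $m$-uniform control of the $(\tau,w)$-integrand, ensures absolute convergence throughout $\Re(s)>1/2$, completing the analytic continuation.
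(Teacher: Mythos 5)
Your overall architecture is sound and is essentially a dual reordering of the paper's argument: the paper first groups $n$ into residue classes $c\bmod q$ to form $\zeta_{\kk}(s+w/2;2m,c/q)$ (equation \eqref{Kloosterman to Lerch}), shifts the $w$-contour past the resulting pole, and only then applies the functional equation \eqref{LerchFE} together with \eqref{eq:sumofklsums}; you instead open the Kloosterman sum first, invert the functional equation immediately, and re-expand the dual Lerch series. Both orderings produce the same gamma ratio $\Gamma(1-s-w/2+m)/\Gamma(s+w/2+m)$ and rely on the same counting identity $\#\{a\bmod q:\ a+a^*\equiv-\bar n\}=\rho_q(\bar n^2-4)$, and your discussion of convergence (the $B^{-m}$ decay from Lemma \ref{hstar-estimate}, the bound \eqref{Lbeaut subconvex}, and the $m$-dependent admissible range of the $w$-contour guaranteed by Lemma \ref{Mellin lemma}) matches what is actually needed.

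There is, however, a concrete error in your polar analysis. After inverting \eqref{LerchFE}, the $m=0$, $\xi=0$ contribution carries the factor $\pi^{1-2s'}\,\Gamma(s')\Gamma(1-s')^{-1}\,4\zeta_{\kk}(s')$ with $s'=1-s-w/2$. The simple pole of $\zeta_{\kk}(s')$ at $s'=1$ (your claimed pole at $w=-2s$) is cancelled by the simple zero of $1/\Gamma(1-s')$ there, so shifting the contour across $w=-2s$ picks up nothing. The genuine pole of this expression --- equivalently, of the original Dirichlet series $4\zeta_{\kk}(s+w/2)$ before the functional equation is applied --- sits at $s+w/2=1$, i.e.\ $w=2-2s$, and comes from $\Gamma(s')=\Gamma(1-s-w/2)$. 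This is not a cosmetic point: only a residue at $w=2-2s$ evaluates $h^{*}(0,\tau;\cdot)$ at the argument $2-2s$, as \eqref{I0 integral def} requires; a residue taken at $w=-2s$ would produce $h^{*}(0,\tau;-2s)$, and no subsequent ``absorption of the gamma factor'' can change the point at which $h^{*}$ is evaluated in a residue computation. The paper sidesteps this by locating the pole before applying the functional equation, where $\zeta_{\kk}(s+w/2;0,c/q)$ visibly has its pole at $w=2-2s$ for every $c$ with residue $\pi$; you should reorganize your residue step the same way (or verify directly that the pole of your integrand is at $w=2-2s$) before the rest of your computation can be trusted.
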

\begin{proof}
First, we assume that $\Re{s}>3/2.$  Then we can apply \eqref{Sigma2}, namely
\begin{multline*}
\Sigma(s)=\frac{8}{\pi^2}
\sum_{q\neq0}\frac{1}{|q|^2}\sum_{n\neq0}\frac{S(n,n;q)}{|n|^{2s}}
\mathop{{\sum}^*}_{m=0}^{\infty}(-1)^m\\\times
\left(\left(\frac{n|q|}{|n|q}\right)^{2m}+\left(\frac{n|q|}{|n|q}\right)^{-2m}\right)
\int_0^{\pi/2}\psi\left(m,\tau;\frac{2\pi |n|}{|q|}\right)d\tau.
\end{multline*}
It is required to verify that all series and integrals in the formula above are absolutely convergent. The only difference with Lemma \ref{lem abs convergent} is the presence of the sum over $m.$ To analyse this sum, it is sufficient to use standard bounds for the $J$-Bessel function. See, for example, \cite[Lemma 3.5]{Hough}, where all required estimates are presented. It is also possible to use the estimate \eqref{psi estimate}.

 It follows from  \eqref{Mellin for psi} that
\begin{multline}\label{Sigma3}
\Sigma(s)=\frac{8}{\pi^2}
\int_0^{\pi/2}
\mathop{{\sum}^*}_{m=0}^{\infty}(-1)^m
\frac{1}{2\pi i}\int_{(a)}\frac{h^{*}(m,\tau;w)}{\cos(\pi w/2)}(\cos\tau)^{-w}\\\times
\sum_{q\neq0}\frac{1}{|q|^2}\sum_{n\neq0}\frac{S(n,n;q)}{|n|^{2s}}
\left(\left(\frac{n|q|}{|n|q}\right)^{2m}+\left(\frac{n|q|}{|n|q}\right)^{-2m}\right)
\left(\frac{2\pi |n|}{|q|}\right)^{-w}dwd\tau,
\end{multline}
where $0>a>2-2\Re(s).$ The next step is to apply \eqref{Lerch zeta}, which gives
\begin{multline}\label{Kloosterman to Lerch}
\sum_{n\neq0}\frac{S(n,n;q)}{|n|^{2s+w}}\left(\frac{n}{|n|}\right)^{2m}=
\left(\frac{q}{|q|}\right)^{2m}\sum_{c\Mod{q}}\frac{S(c,c;q)}{|q|^{2s+w}}\zeta_{\kk}(s+w/2;2m,c/q).
\end{multline}
Substituting \eqref{Kloosterman to Lerch} to \eqref{Sigma3}, we obtain
\begin{multline}\label{Sigma4}
\Sigma(s)=\frac{8}{\pi^2}
\int_0^{\pi/2}
\mathop{{\sum}^*}_{m=0}^{\infty}(-1)^m
\frac{1}{2\pi i}\int_{(a)}\frac{h^{*}(m,\tau;w)}{\cos(\pi w/2)}(\cos\tau)^{-w}
\sum_{q\neq0}\frac{1}{|q|^{2+2s}}
\sum_{c\Mod{q}}S(c,c;q)\\\times
\left(\zeta_{\kk}(s+w/2;2m,c/q)+\zeta_{\kk}(s+w/2;-2m,c/q)\right)
\frac{dwd\tau}{(2\pi)^w}.
\end{multline}
According to Lemma \ref{Lerch lemma}, the Lerch zeta function has a simple pole at $w=2-2s$ only for $m=0$ (but for all $c$) with residue $\pi$. Thus moving the $w$-contour to the left up to $a_1:=-2s-\epsilon$, we cross these poles. We remark that Lemma \ref{Mellin lemma} plays a crucial  role here since it enables us to move the contour up to $a_1>-2m-2N-2$. Note that $N=0$ is not sufficient!

The final step is to apply the functional equation \eqref{LerchFE} for the Lerch zeta function,  which is now possible since $\Re(s+w/2)<0$. Using \eqref{LerchFE} and \eqref{eq:sumofklsums}, we obtain
\begin{multline}\label{Sigma5}
\sum_{q\neq0}\frac{1}{|q|^{2+2s}}
\sum_{c\Mod{q}}S(c,c;q)
\zeta_{\kk}(s+w/2;2m,c/q)=\\
(-i)^{|2m|}\pi^{2s+w-1}\frac{\Gamma(1-s-w/2+|m|)}{\Gamma(s+w/2+|m|)}\\\times
\sum_{n\neq0}\left(\frac{n}{|n|}\right)^{2m}|n|^{2s+w-2}
\sum_{q\neq0}\frac{1}{|q|^{2+2s}}
\sum_{c\Mod{q}}S(c,c;q)e[n\bar{c/q}]=\\
(-1)^{|m|}\pi^{2s+w-1}\frac{\Gamma(1-s-w/2+|m|)}{\Gamma(s+w/2+|m|)}
\frac{\zeta_{\kk}(s)}{\zeta_{\kk}(2s)}\\\times
\sum_{n\neq0}\left(\frac{n}{|n|}\right)^{2m}|n|^{2s+w-2}
\mathscr{L}_{\kk}(s;\bar{n}^2-4).
\end{multline}
To evaluate the residue at $w=2-2s$, we use \eqref{Sigma5} and \eqref{eq:sumofklsums}, obtaining
\begin{multline}\label{Sigma6}
\Sigma(s)=
-\frac{8}{\pi^2}\frac{\zeta_{\kk}(s)}{\zeta_{\kk}(2s)}\mathscr{L}_{\kk}(s;-4)
\int_0^{\pi/2}
\frac{h^{*}(0,\tau;2-2s)}{\cos(\pi s)}(\cos\tau)^{-2+2s}\frac{\pi d\tau}{(2\pi)^{2-2s}}+\\
\frac{8\pi^{2s-1}}{\pi^2}
\int_0^{\pi/2}
\mathop{{\sum}^*}_{m=0}^{\infty}
\left(\left(\frac{n}{|n|}\right)^{2m}+\left(\frac{n}{|n|}\right)^{-2m}
\right)
\sum_{n\neq0}\frac{\zeta_{\kk}(s)}{\zeta_{\kk}(2s)}\mathscr{L}_{\kk}(s;\bar{n}^2-4)|n|^{2s-2}
\\\times
\frac{1}{2\pi i}\int_{(a_1)}\frac{h^{*}(m,\tau;w)}{\cos(\pi w/2)}(\cos\tau)^{-w}
\frac{\Gamma(1-s-w/2+m)}{\Gamma(s+w/2+m)}
\left(\frac{|n|}{2}\right)^w
dwd\tau.
\end{multline}
It follows from equations \eqref{I integral def} and \eqref{I0 integral def} that
\begin{multline}\label{Sigma7}
\Sigma(s)=
\frac{8(2\pi)^{2s-1}}{\pi^2}
\int_0^{\pi/2}
\frac{\zeta_{\kk}(s)}{\zeta_{\kk}(2s)}\mathscr{L}_{\kk}(s;-4)
I(0,\tau,s)d\tau+\\
\frac{8(2\pi)^{2s-1}}{\pi^2}
\int_0^{\pi/2}
\frac{\zeta_{\kk}(s)}{\zeta_{\kk}(2s)}\sum_{n\neq0}\mathscr{L}_{\kk}(s;\bar{n}^2-4)
I(n,\tau,s)d\tau.
\end{multline}
This formula proves the analytic continuation of $\Sigma(s)$ to the region $\Re(s)>1/2$ since $I(n,\tau,s)$ is analytic for $\Re(s)>1/2$ and is of rapid decay in the $n$ aspect. This can be easily shown by moving the line of integration in \eqref{I integral def} to the left.
\end{proof}

\subsection{Properties of the weight function}
Our next goal is to evaluate the function $I(n,\tau,s)$ defined by \eqref{I integral def}.


\begin{lem}\label{integral hstar-Gamma} For a positive integer $m$ and $\Re(s)>3/4$  we have
\begin{multline}\label{integral with hstar-Gamma}
\frac{1}{2\pi i}\int_{(a)}\frac{h^{*}(m,\tau;w)}{\cos(\pi w/2)}(\cos\tau)^{-w}
\frac{\Gamma(1-s-w/2+m)}{\Gamma(s+w/2+m)}x^wdw=\\
2\int_0^{\infty}\psi(m,\tau;z)J_{2m}(2xz)(xz)^{2-2s}\frac{dz}{z},
\end{multline}
where $-2m-2N-2<a<2-2\Re(s)+2m.$
\end{lem}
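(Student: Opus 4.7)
The plan is to read \eqref{integral with hstar-Gamma} as an instance of the Mellin--Parseval identity: substitute the Mellin--Barnes representation \eqref{Mellin for psi} for $\psi$ into the right-hand side, interchange the order of the $z$- and $w$-integrations, and evaluate the resulting inner Bessel integral by a classical formula.

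Concretely, after substituting \eqref{Mellin for psi} and swapping the order of integration, the inner $z$-integral becomes
\begin{equation*}
2x^{2-2s}\int_0^{\infty} z^{1-2s-w}J_{2m}(2xz)\, dz \;=\; x^{w}\,\frac{\Gamma(1-s-w/2+m)}{\Gamma(s+w/2+m)},
\end{equation*}
which is a standard Mellin transform of the $J$-Bessel function (see \cite[6.561.14]{GR}). Plugging this back into the $w$-integral produces exactly the left-hand side of \eqref{integral with hstar-Gamma}.

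The delicate point is the legitimacy of the Fubini swap, since $J_{2m}(2xz)$ decays only like $z^{-1/2}$ at infinity. The hypothesis $\Re(s)>3/4$ is exactly what ensures that the strip of absolute convergence of the $z$-integral, namely $\tfrac{3}{2}-2\Re(s)<\Re(w)<2m+2-2\Re(s)$, is non-empty and contained in the admissible range for $a$. Fixing $a$ in this sub-strip, I would combine the rapid decay of $h^{*}(m,\tau;w)/\cos(\pi w/2)$ on vertical lines, furnished by Lemma \ref{hstar-estimate}, with the standard asymptotic bounds on $J_{2m}$ to verify that the double integral is absolutely convergent; Fubini then applies and the argument above becomes rigorous for such $a$.

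Finally, to recover the full range stated in the lemma, observe that the right-hand side of \eqref{integral with hstar-Gamma} is manifestly independent of $a$, while the left-hand side is invariant under contour shifts within $-2m-2N-2<\Re(w)<2-2\Re(s)+2m$ by Cauchy's theorem: the analyticity of $h^{*}(m,\tau;w)/\cos(\pi w/2)$ throughout this strip was established in Lemma \ref{Mellin lemma}, and the Gamma ratio $\Gamma(1-s-w/2+m)/\Gamma(s+w/2+m)$ is holomorphic there. Hence no poles are crossed and the identity propagates to the full admissible range. The main obstacle is the Fubini verification; everything else is essentially bookkeeping with standard special-function identities.
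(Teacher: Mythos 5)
Your argument is correct and rests on the same two ingredients as the paper's: the Mellin pair $\psi \leftrightarrow h^{*}/\cos(\pi w/2)$ and the Mellin transform of $J_{2m}$ against the Gamma-quotient, glued by Fubini. The difference is purely in the direction and in how the range of $s$ is reached. The paper starts from the left-hand side, recognizes $h^{*}(m,\tau;w)(\cos\tau)^{-w}/\cos(\pi w/2)=\hat{\psi}(m,\tau;w)$, swaps, and evaluates the \emph{inner $w$-integral} as the Mellin--Barnes representation $2(xz)^{2-2s}J_{2m}(2xz)$; for its Fubini step it needs $a$ in $(\max(2-2\Re(s),-2m-2N-2),0)$, which forces $\Re(s)>1$, and it then extends to $\Re(s)>3/4$ by analytic continuation in $s$ (both sides being absolutely convergent and analytic there). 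You instead start from the right-hand side, insert \eqref{Mellin for psi}, swap, and evaluate the \emph{inner $z$-integral} as the forward Mellin transform of $J_{2m}$; this lets you work directly at $\Re(s)>3/4$, with the full range of $a$ recovered by contour shifts, which you correctly justify via \eqref{h star zero2} and the location of the poles of $\Gamma(1-s-w/2+m)$. Both routes are sound; yours avoids the continuation in $s$ at the cost of an explicit contour-shift step. One small imprecision: the strip $\tfrac32-2\Re(s)<\Re(w)<2m+2-2\Re(s)$ is non-empty for every $s$; what $\Re(s)>3/4$ actually buys is that this strip meets the half-plane $\Re(w)<0$ on which Lemma \ref{Mellin lemma} supplies the representation \eqref{Mellin for psi} (equivalently, that the right-hand side of \eqref{integral with hstar-Gamma} converges absolutely at infinity, where the integrand is $\ll z^{1/2-2\Re(s)+\epsilon}$). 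This does not affect the validity of the proof.
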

\begin{proof}
First, we assume that $\Re(s)>1$ and choose $a$ such that
\begin{equation}\label{a condition}
0>a>\max(2-2\Re(s),-2m-2N-2).
\end{equation}
Due to \eqref{Mellin def} and \eqref{Mellin for psi} we have
\begin{equation}\label{h star zero}
\frac{h^{*}(m,\tau;w)}{\cos(\pi w/2)}(\cos\tau)^{-w}=\hat{\psi}(m,\tau;w).
\end{equation}
Therefore,
\begin{multline}\label{integral with hstar-Gamma2}
\frac{1}{2\pi i}\int_{(a)}\frac{h^{*}(m,\tau;w)}{\cos(\pi w/2)}(z\cos\tau)^{-w}
\frac{\Gamma(1-s-w/2+m)}{\Gamma(s+w/2+m)}x^wdw=\\
\frac{1}{2\pi i}\int_{(a)}
\frac{\Gamma(1-s-w/2+m)}{\Gamma(s+w/2+m)}x^w
\left(\int_0^{\infty}\psi(m,\tau;z)z^{w-1}dz\right) dw.
\end{multline}
The conditions  \eqref{a condition} on $a$, the Stirling formula \eqref{Stirling2} and the estimate \eqref{psi estimate} guarantee the absolute convergence of the double integral on the right-hand side of \eqref{integral with hstar-Gamma2}. This fact allows us to change the order of integration in \eqref{integral with hstar-Gamma2}, getting
\begin{multline}\label{integral with hstar-Gamma3}
\frac{1}{2\pi i}\int_{(a)}\frac{h^{*}(m,\tau;w)}{\cos(\pi w/2)}(z\cos\tau)^{-w}
\frac{\Gamma(1-s-w/2+m)}{\Gamma(s+w/2+m)}x^wdw=\\
\int_0^{\infty}\psi(m,\tau;z)\frac{1}{2\pi i}\int_{(a)}
\frac{\Gamma(1-s-w/2+m)}{\Gamma(s+w/2+m)}(zx)^wdw\frac{dz}{z}.
\end{multline}
Since the inner integral over $w$ on the right-hand side of \eqref{integral with hstar-Gamma3} is equal to $2(xz)^{2-2s}J_{2m}(2xz)$, we obtain \eqref{integral with hstar-Gamma} for $\Re(s)>1$.
\par
It follows from \eqref{psi estimate} and \eqref{Stirling2} that the integral on the left-hand side of \eqref{integral with hstar-Gamma} is absolutely convergent for $\Re(s)>1/2.$ Furthermore, the estimate \eqref{psi estimate} and standard estimates for the Bessel function show that the integral on the right-hand side of \eqref{integral with hstar-Gamma} is absolutely convergent for $\Re(s)>3/4.$ Thus we proved \eqref{integral with hstar-Gamma} for $\Re(s)>3/4.$
\end{proof}

\begin{lem} Let $0\le\vartheta\le\pi/2$ and $a,b,z>0.$ We have
\begin{multline}\label{Bessel sum over m}
J_{0}(az)J_{0}(bz)+2\sum_{m=1}^{\infty}\cos(2m\vartheta)J_{2m}(az)J_{2m}(bz)=\\
\frac{1}{2}J_0\left(z\sqrt{a^2+b^2+2ab\cos\vartheta}\right)+
\frac{1}{2}J_0\left(z\sqrt{a^2+b^2-2ab\cos\vartheta}\right).
\end{multline}
\end{lem}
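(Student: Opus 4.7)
The plan is to reduce the identity to Neumann's addition formula for Bessel functions applied at two complementary angles and then symmetrize. Recall that for any $a,b,z>0$ and $\phi\in[0,\pi]$, the classical addition formula (see, e.g., \cite[10.23.2]{HMF}) gives
\begin{equation*}
J_0\!\left(z\sqrt{a^{2}+b^{2}-2ab\cos\phi}\right)=J_0(az)J_0(bz)+2\sum_{n=1}^{\infty}J_n(az)J_n(bz)\cos(n\phi),
\end{equation*}
where the series converges uniformly on compact subsets. This follows either from Graf's addition theorem with $\nu=0$, or directly by noting that $J_n(a)J_n(b)$ is even in $n$ (since $J_{-n}=(-1)^nJ_n$) and that the generating function identity $e^{i\zeta\sin\theta}=\sum_{n\in\Z}J_n(\zeta)e^{in\theta}$ evaluated via the law of cosines produces the stated series.

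Next I would apply this formula twice, once with $\phi=\vartheta$ and once with $\phi=\pi-\vartheta$. Since $\cos(n(\pi-\vartheta))=(-1)^n\cos(n\vartheta)$, the two identities read
\begin{align*}
J_0\!\left(z\sqrt{a^{2}+b^{2}-2ab\cos\vartheta}\right)&=J_0(az)J_0(bz)+2\sum_{n=1}^{\infty}J_n(az)J_n(bz)\cos(n\vartheta),\\
J_0\!\left(z\sqrt{a^{2}+b^{2}+2ab\cos\vartheta}\right)&=J_0(az)J_0(bz)+2\sum_{n=1}^{\infty}(-1)^{n}J_n(az)J_n(bz)\cos(n\vartheta).
\end{align*}

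Adding these two expressions, the odd-$n$ contributions cancel while the even-$n$ terms double. Writing $n=2m$ in the surviving sum and dividing by two yields exactly
\begin{equation*}
\tfrac{1}{2}J_0\!\left(z\sqrt{a^{2}+b^{2}+2ab\cos\vartheta}\right)+\tfrac{1}{2}J_0\!\left(z\sqrt{a^{2}+b^{2}-2ab\cos\vartheta}\right)=J_0(az)J_0(bz)+2\sum_{m=1}^{\infty}\cos(2m\vartheta)J_{2m}(az)J_{2m}(bz),
\end{equation*}
which is the claim. There is no genuine obstacle: the only subtle point is justifying the rearrangement of the two absolutely convergent series when they are added, and this follows from the uniform convergence bound $J_n(x)\ll_{x}(e|x|/(2n))^{n}$ for fixed $x$ as $n\to\infty$.
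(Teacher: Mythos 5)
Your proof is correct, but it takes a different route from the paper's. The paper simply quotes the tabulated formula [5.7.17.8] of Prudnikov--Brychkov--Marichev, which expresses $2\sum_{m\ge1}\cos(2m\vartheta)J_{2m}(az)J_{2m}(bz)$ as $-J_0(az)J_0(bz)$ plus a finite combination $\tfrac12\sum_{k=k_1}^{k_2}\alpha_k J_0\bigl(z\sqrt{a^2+b^2-2ab(-1)^k\cos\vartheta}\bigr)$, and then spends its effort on bookkeeping: determining the limits $k_1,k_2$ and the edge coefficients $\alpha_{k_{1,2}}$, with a separate case distinction between $\vartheta=0$ and $0<\vartheta\le\pi/2$. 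You instead derive the identity from first principles: Neumann's (Graf's) addition theorem at $\nu=0$ gives $J_0\bigl(z\sqrt{a^2+b^2-2ab\cos\phi}\bigr)=J_0(az)J_0(bz)+2\sum_{n\ge1}J_n(az)J_n(bz)\cos(n\phi)$, and averaging the instances $\phi=\vartheta$ and $\phi=\pi-\vartheta$ kills the odd-$n$ terms via $\cos(n(\pi-\vartheta))=(-1)^n\cos(n\vartheta)$, leaving exactly the even-index sum. Your symmetrization is cleaner and self-contained (no case analysis, no reliance on a table whose edge-coefficient conventions must be decoded), at the cost of invoking the addition theorem itself; the paper's version is shorter on the page but pushes the real content into the cited reference. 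The convergence remark at the end is adequate: $|J_n(x)|\le |x/2|^n/n!$ gives absolute convergence for fixed $az,bz$, so adding the two series termwise is legitimate.
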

\begin{proof}
Applying \cite[5.7.17.8]{PBM}, we obtain
\begin{multline}\label{Bessel sum over m2}
2\sum_{m=1}^{\infty}\cos(2m\vartheta)J_{2m}(az)J_{2m}(bz)=\\
-J_{0}(az)J_{0}(bz)+\frac{1}{2}\sum_{k=k_1}^{k_2}\alpha_kJ_0\left(z\sqrt{a^2+b^2-2ab(-1)^k\cos\vartheta}\right),
\end{multline}
where $k_1=-[1+\vartheta/\pi],k_2=[1-\vartheta/\pi]$ and $\alpha_k=1$ for $k\neq k_{1,2}.$ If $\vartheta/\pi\in\Z$ then $\alpha_{k_{1,2}}=1/2$, otherwise $\alpha_{k_{1,2}}=1.$ Thus for $\vartheta=0$ we have $k_1=-1, k_2=1, \alpha_{\pm1}=1/2$, which proves \eqref{Bessel sum over m}. For $0<\vartheta\le\pi/2$ we have $k_1=-1, k_2=0, \alpha_{-1}= \alpha_{0}=1$, obtaining \eqref{Bessel sum over m} once again.
\end{proof}


\begin{lem} Let $\vartheta=\arg(n).$ For $1/2<\Re(s)<1$ we have
\begin{multline}\label{I integral}
I(n,\tau,s)=
\frac{1}{4}\int_{-\infty}^{\infty}r^2h(r)\cosh(\pi r)\sum_{\pm}
\int_0^{\infty}
J_0\left(z\sqrt{|n|^2+4\sin^2\tau\pm4|n|\sin\tau\cos\vartheta}\right)\\ \times
K_{2ir}(2z\cos\tau)z^{1-2s}
dzdr.
\end{multline}
\end{lem}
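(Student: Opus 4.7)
The plan is to reduce the sum--integral defining $I(n,\tau,s)$ to the double integral in \eqref{I integral}, using Lemma \ref{integral hstar-Gamma} term by term and then summing the resulting Bessel series in closed form via \eqref{Bessel sum over m}.

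First I would rewrite the factor coming from the argument of $n$: since $(n/|n|)^{2m}+(n/|n|)^{-2m}=2\cos(2m\vartheta)$ with $\vartheta=\arg n$, the defining series \eqref{I integral def} becomes
\begin{equation*}
I(n,\tau,s)=|n|^{2s-2}\,2^{1-2s}\mathop{{\sum}^*}_{m=0}^{\infty}2\cos(2m\vartheta)
\cdot \frac{1}{2\pi i}\int_{(a)}\frac{h^{*}(m,\tau;w)}{\cos(\pi w/2)}(\cos\tau)^{-w}
\frac{\Gamma(1-s-w/2+m)}{\Gamma(s+w/2+m)}\Bigl(\frac{|n|}{2}\Bigr)^{w}dw.
\end{equation*}
Next I would apply Lemma \ref{integral hstar-Gamma} with $x=|n|/2$ to every inner Mellin integral (the hypothesis $\Re(s)>3/4$ is covered by the range $1/2<\Re(s)<1$, after first establishing the identity in $3/4<\Re(s)<1$ and extending by analytic continuation if needed). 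This converts each Mellin integral into $2\int_{0}^{\infty}\psi(m,\tau;z)J_{2m}(|n|z)(|n|z/2)^{2-2s}\,dz/z$. After collecting powers, the prefactor $|n|^{2s-2}2^{1-2s}\cdot|n|^{2-2s}2^{2s-2}$ simplifies to $1/2$, and the overall identity becomes
\begin{equation*}
I(n,\tau,s)=2\mathop{{\sum}^*}_{m=0}^{\infty}\cos(2m\vartheta)\int_{0}^{\infty}\psi(m,\tau;z)J_{2m}(|n|z)z^{1-2s}\,dz.
\end{equation*}

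Then I would substitute the definition \eqref{psi def}--\eqref{g def} of $\psi(m,\tau;z)$, which introduces $J_{2m}(2z\sin\tau)K_{2ir}(2z\cos\tau)$, and interchange the order of the $m$-sum, the $z$-integral and the $r$-integral. Absolute convergence for this interchange is guaranteed by the estimate \eqref{psi estimate} (which provides the exponential decay $B^{-m}$ in the summation variable together with the $z^{\epsilon}/z^{A}$ bounds needed at $0$ and $\infty$), combined with the Gaussian decay \eqref{conditions on h 2} of $h(r)$ and standard polynomial-in-$r$ bounds on $K_{2ir}$ away from the origin. After this rearrangement, the inner quantity to sum is
\begin{equation*}
\mathop{{\sum}^*}_{m=0}^{\infty}\cos(2m\vartheta)J_{2m}(|n|z)J_{2m}(2z\sin\tau)
=\tfrac{1}{2}\Bigl[J_{0}(|n|z)J_{0}(2z\sin\tau)+2\sum_{m=1}^{\infty}\cos(2m\vartheta)J_{2m}(|n|z)J_{2m}(2z\sin\tau)\Bigr].
\end{equation*}

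Applying \eqref{Bessel sum over m} with $a=|n|$ and $b=2\sin\tau$ collapses the bracket to
\begin{equation*}
\tfrac{1}{4}\sum_{\pm}J_{0}\Bigl(z\sqrt{|n|^{2}+4\sin^{2}\tau\pm 4|n|\sin\tau\cos\vartheta}\Bigr),
\end{equation*}
and combining with the factor $2$ outside gives exactly the right-hand side of \eqref{I integral}. The main obstacle in this plan is the justification of the interchange of the sum over $m$ with the two integrations: one must combine \eqref{psi estimate} (for large $m$ and for both $z\to 0$ and $z\to\infty$) with uniform bounds on $J_{2m}(|n|z)z^{1-2s}$, and, given the mild decay of $K_{2ir}(2z\cos\tau)$ for small argument, one should split the $z$-integral near $0$ and use a larger value of the exponent $A$ in \eqref{psi estimate} there; everything else is an application of the two lemmas already established.
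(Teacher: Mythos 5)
Your proposal is correct and follows essentially the same route as the paper: apply Lemma \ref{integral hstar-Gamma} with $x=|n|/2$ termwise to the defining series \eqref{I integral def}, unfold $\psi(m,\tau;z)$ via \eqref{psi def}--\eqref{g def}, interchange the sum and integrals, and collapse the $m$-sum with \eqref{Bessel sum over m}. Your bookkeeping of the constants and your remarks on justifying the interchange and on passing from $\Re(s)>3/4$ to $1/2<\Re(s)<1$ are consistent with (indeed slightly more explicit than) the argument in the text.
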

\begin{proof}
Applying \eqref{integral with hstar-Gamma}, we obtain for $\Re(s)>3/4$
\begin{multline}\label{I integral2}
I(n,\tau,s)=
\mathop{{\sum}^*}_{m=0}^{\infty}
\left(\left(\frac{n}{|n|}\right)^{2m}+\left(\frac{n}{|n|}\right)^{-2m}
\right)
\int_0^{\infty}\psi(m,\tau;z)J_{2m}(z|n|)z^{1-2s}dz.
\end{multline}
Using \eqref{psi def} and \eqref{g def} we have for $1/2<\Re(s)<1$
\begin{multline}\label{I integral3}
I(n,\tau,s)=
\frac{1}{2}\int_{-\infty}^{\infty}r^2h(r)\cosh(\pi r)
\int_0^{\infty}K_{2ir}(2z\cos\tau)\\ \times
\mathop{{\sum}^*}_{m=0}^{\infty}
\left(\left(\frac{n}{|n|}\right)^{2m}+\left(\frac{n}{|n|}\right)^{-2m}
\right)J_{2m}(2z\sin\tau)J_{2m}(z|n|)z^{1-2s}
dzdr.
\end{multline}
Applying \eqref{Bessel sum over m}, we obtain
\begin{multline}\label{sum over m}
\mathop{{\sum}^*}_{m=0}^{\infty}
\left(\left(\frac{n}{|n|}\right)^{2m}+\left(\frac{n}{|n|}\right)^{-2m}\right)
J_{2m}(z|n|)J_{2m}(2z\sin\tau)=\\
\frac{1}{2}\sum_{\pm}J_0\left(z\sqrt{|n|^2+4\sin^2\tau\pm4|n|\sin\tau\cos\vartheta}\right).
\end{multline}
Finally, substituting \eqref{sum over m} in \eqref{I integral3} yields \eqref{I integral}.
\end{proof}


Let $\vartheta=\arg(n)$ and
\begin{equation}\label{c def}
c_{\pm}=\left(1\pm\frac{4\sin\tau\cos\vartheta}{|n|}+\frac{4\sin^2\tau}{|n|^2}\right)^{1/2},
\end{equation}
\begin{equation}\label{x def}
x_{\pm}(n,\tau)=\frac{|n|^2+4\sin^2\tau\pm4|n|\sin\tau\cos\vartheta}{(2\cos\tau)^{2}}=\frac{(|n|c_{\pm})^2}{(2\cos\tau)^{2}}.
\end{equation}
In some cases for simplicity we will write $x_{\pm}$ instead of $x_{\pm}(n,\tau).$
\begin{lem}\label{lem i1}  For $\Re(s)<1$ we have
\begin{multline}\label{I integral hypergeom1}
I(n,\tau,s)=
\frac{1}{4}\int_{-\infty}^{\infty}r^2h(r)\cosh(\pi r)\sum_{\pm}
\frac{\Gamma(1-s-ir)\Gamma(1-s+ir)}{4(\cos\tau)^{2-2s}}\\ \times
F\left(1-s+ir,1-s-ir,1;-x_{\pm}(n,\tau)\right)dr.
\end{multline}
\end{lem}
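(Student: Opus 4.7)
The starting point is formula \eqref{I integral} from the preceding lemma. For each fixed $r$ and each choice of sign, the inner integral has the shape
\[
\int_0^\infty J_0(|n|c_\pm z)\,K_{2ir}(2z\cos\tau)\, z^{1-2s}\,dz,
\]
which is a classical Weber--Schafheitlin-type integral. My plan is to evaluate this integral in closed form via \cite[6.576.3]{GR}, the same table entry already used by the authors for the Mellin transform \eqref{g Mellin}. This will reduce the problem to a one-line substitution.

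Concretely, I would apply the formula
\[
\int_0^\infty x^{-\lambda}J_\mu(ax)K_\nu(bx)\,dx = \frac{a^\mu\,\Gamma\!\left(\tfrac{1+\mu+\nu-\lambda}{2}\right)\Gamma\!\left(\tfrac{1+\mu-\nu-\lambda}{2}\right)}{2^{\lambda+1}b^{1+\mu-\lambda}\Gamma(\mu+1)}F\!\left(\tfrac{1+\mu+\nu-\lambda}{2},\tfrac{1+\mu-\nu-\lambda}{2},\mu+1;-\tfrac{a^2}{b^2}\right)
\]
with $\mu=0$, $\nu=2ir$, $\lambda=2s-1$, $a=|n|c_\pm$, and $b=2\cos\tau$. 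The Gamma factors then simplify to $\Gamma(1-s+ir)\Gamma(1-s-ir)$, the prefactor $2^{\lambda+1}b^{1+\mu-\lambda}=2^{2s}(2\cos\tau)^{2-2s}$ collapses to $4(\cos\tau)^{2-2s}$, and by \eqref{x def} the hypergeometric argument is exactly $-a^2/b^2=-x_\pm(n,\tau)$. Substituting the resulting identity back into \eqref{I integral} gives precisely \eqref{I integral hypergeom1}.

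The convergence conditions attached to \cite[6.576.3]{GR} require $\Re(\mu+1-\lambda)>|\Re(\nu)|$, i.e.\ $2-2\Re(s)>0$, which is exactly the restriction $\Re(s)<1$ in the statement; the auxiliary condition $\Re(a+b)>|\Im(a)|$ holds trivially since $|n|c_\pm$ and $\cos\tau$ are positive real. The only mild point that deserves a comment is that after the substitution the remaining $r$-integral must still make sense: by Stirling \eqref{Stirling2}, the Gamma product $\Gamma(1-s\pm ir)$ decays like $\exp(-\pi|r|)|r|^{1-2\Re(s)}$, which together with the factor $\cosh(\pi r)$ gives polynomial growth, and this is more than absorbed by the super-exponential decay of $h(r)$ in \eqref{conditions on h 2}. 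Thus no delicate interchange of integration is required, and the computation is essentially immediate. I do not anticipate a serious obstacle — the content of the lemma is precisely the identification of the correct Bessel integral and a careful bookkeeping of the constants.
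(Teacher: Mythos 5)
Your proposal is correct and is essentially identical to the paper's proof, which likewise evaluates the inner $z$-integral of \eqref{I integral} via \cite[6.576.3]{GR}; your bookkeeping of the parameters and constants checks out. The only point worth adding is that \eqref{I integral} itself was established only for $1/2<\Re(s)<1$, so the direct substitution proves the identity in that strip, and the full range $\Re(s)<1$ then follows because the right-hand side of \eqref{I integral hypergeom1} is analytic there — exactly the remark the paper makes.
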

\begin{proof}
For $1/2<\Re(s)<1$ the formula \eqref{I integral hypergeom1} follows from \eqref{I integral} and \cite[6.576.3]{GR}. The right-hand side of \eqref{I integral hypergeom1} yields the analytic continuation of $I(n,\tau,s)$ to the region $\Re(s)<1.$
\end{proof}


\begin{lem}\label{lem i2} For $\Re(s)<1$ we have
\begin{multline}\label{I integral hypergeom2}
I(n,\tau,s)=\sum_{\pm}\frac{1}{8(x_{\pm}(n,\tau)\cos^2\tau)^{1-s}}
\int_{-\infty}^{\infty}r^2h(r)\cosh(\pi r)x_{\pm}^{-ir}\\
\frac{\Gamma(1-s+ir)\Gamma(-2ir)}{\Gamma(s-ir)}
F\left(1-s+ir,1-s+ir,1+2ir;\frac{-1}{x_{\pm}(n,\tau)}\right)dr.
\end{multline}
\end{lem}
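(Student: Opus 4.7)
The plan is to apply the standard linear transformation for the hypergeometric function (DLMF \cite[15.10.22]{HMF} or \cite[2.10(2)]{Erdelyi}) to the representation obtained in Lemma~\ref{lem i1}. Specifically, for parameters $a=1-s+ir$, $b=1-s-ir$, $c=1$ and variable $z=-x_{\pm}$, the transformation reads
\begin{multline*}
F(a,b;c;z)=\frac{\Gamma(c)\Gamma(b-a)}{\Gamma(b)\Gamma(c-a)}(-z)^{-a}F(a,a-c+1;a-b+1;1/z)\\
+\frac{\Gamma(c)\Gamma(a-b)}{\Gamma(a)\Gamma(c-b)}(-z)^{-b}F(b,b-c+1;b-a+1;1/z).
\end{multline*}
Since $x_\pm=|n\pm 2\sin\tau|^2/(2\cos\tau)^2\ge 0$, the condition $|\arg(-z)|<\pi$ is satisfied and the powers $(-z)^{-a}$, $(-z)^{-b}$ reduce to $x_\pm^{s-1-ir}$ and $x_\pm^{s-1+ir}$, respectively.

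Substituting this into the formula of Lemma~\ref{lem i1} and carrying out the obvious cancellations of $\Gamma(1-s\mp ir)$ with the denominator, one obtains under the integral sign a sum of two terms
\begin{equation*}
\tilde{T}_1(r)=\frac{\Gamma(1-s+ir)\Gamma(-2ir)}{\Gamma(s-ir)}x_\pm^{s-1-ir}F\!\left(1-s+ir,1-s+ir;1+2ir;\tfrac{-1}{x_\pm}\right)
\end{equation*}
and $\tilde{T}_2(r)$ defined analogously with $r$ replaced by $-r$. The crucial observation is that $\tilde{T}_2(r)=\tilde{T}_1(-r)$, and since $r^2 h(r)\cosh(\pi r)$ is even in $r$ (by condition~\eqref{conditions on h 1} and the assumption that $h$ is even), both terms contribute equally to the $r$-integral. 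Hence the sum $\tilde{T}_1+\tilde{T}_2$ inside the integral may be replaced by $2\tilde{T}_1$, which converts the prefactor $1/4$ in Lemma~\ref{lem i1} into $1/8$.

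Finally, rearranging $x_\pm^{s-1-ir}/(\cos\tau)^{2-2s}=x_\pm^{-ir}/(x_\pm\cos^2\tau)^{1-s}$ produces precisely the claimed expression \eqref{I integral hypergeom2}.

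The one technical point that must be addressed with care is that the individual term $\tilde{T}_1(r)$ features $\Gamma(-2ir)$, which has a simple pole at $r=0$; however, the factor $r^2$ in the spectral measure kills this singularity, so the integral of $\tilde{T}_1$ is absolutely convergent by itself. This justifies the symmetrization step (replacing a sum of two terms, each integrable, by twice one of them). Beyond this, the argument is routine bookkeeping of Gamma factors; the valid range $\Re(s)<1$ is inherited from Lemma~\ref{lem i1}, since the right-hand side defines a meromorphic function of $s$ in a half-plane containing $\Re(s)<1$ and agrees with the left-hand side wherever both are defined.
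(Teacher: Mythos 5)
Your proposal is correct and follows essentially the same route as the paper: the paper also derives \eqref{I integral hypergeom2} from Lemma \ref{lem i1} by applying the $z\mapsto 1/z$ linear transformation for the hypergeometric function (cited there as \cite[p.117, (34)]{BE}, the same identity as DLMF 15.10.22) and then combining the two resulting terms via the evenness of $r^{2}h(r)\cosh(\pi r)$ in $r$, which accounts for the change of prefactor from $1/16$ to $1/8$. Your remarks on the removable singularity of $\Gamma(-2ir)$ at $r=0$ and on the positivity of $x_{\pm}$ are correct and only make explicit what the paper leaves implicit.
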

\begin{proof}
The formula \eqref{I integral hypergeom2} follows from \eqref{I integral hypergeom1}  and \cite[p.117, (34)]{BE}. Indeed,
\begin{multline*}\label{hypergeometric transform}
\frac{\Gamma(1-s-ir)\Gamma(1-s+ir)}{4(\cos\tau)^{2-2s}}
F\left(1-s+ir,1-s-ir,1;-\frac{(|n|c_{\pm})^2}{(2\cos\tau)^{2}}\right)=\\
\left(\frac{|n|c_{\pm}}{2\cos\tau}\right)^{-2(1-s+ir)}
\frac{\Gamma(-2ir)\Gamma(1-s+ir)}{4(\cos\tau)^{2-2s}\Gamma(s-ir)}\\\times
F\left(1-s+ir,1-s+ir,1+2ir;-\frac{(2\cos\tau)^{2}}{(|n|c_{\pm})^2}\right)+\\
\left(\frac{|n|c_{\pm}}{2\cos\tau}\right)^{-2(1-s-ir)}
\frac{\Gamma(2ir)\Gamma(1-s-ir)}{4(\cos\tau)^{2-2s}\Gamma(s+ir)}\\\times
F\left(1-s-ir,1-s-ir,1-2ir;-\frac{(2\cos\tau)^{2}}{(|n|c_{\pm})^2}\right).
\end{multline*}
\end{proof}

\begin{lem}\label{lemma I estimate 1} For $0<\Re(s)<1$ we have
\begin{equation}\label{I integral estimate1}
I(n,\tau,s)\ll|n|^{-N-1/2},
\end{equation}
where $N$ is such that \eqref{conditions on h 1} is satisfied.
\end{lem}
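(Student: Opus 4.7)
The plan is to start from the representation of $I(n,\tau,s)$ in Lemma \ref{lem i2} and shift the contour of integration in the variable $r$ from the real axis down to $\Im(r) = -N - 1/2 + \epsilon$ for small $\epsilon > 0$. On this shifted contour the oscillating factor $x_{\pm}^{-ir}$ has magnitude $x_{\pm}^{-N-1/2+\epsilon}$, and since for $n \neq 0$ one has $x_{\pm}(n,\tau) \sim |n|^{2}/(2\cos\tau)^{2}$ for large $|n|$, this yields a saving of order $|n|^{-(2N+1)+2\epsilon}$. Combined with the prefactor $(x_{\pm}\cos^{2}\tau)^{-(1-s)}$, which contributes an additional $|n|^{-2(1-\Re(s))}$ of decay, the resulting bound $|n|^{-(2N+1)-2(1-\Re(s))+2\epsilon}$ is considerably stronger than the claimed $|n|^{-N-1/2}$ throughout the range $0 < \Re(s) < 1$.

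To justify moving the contour, I would verify that no uncancelled residues are crossed in the strip $-N - 1/2 + \epsilon \leq \Im(r) \leq 0$. The factor $\Gamma(1-s+ir)$ has poles only at $r = i(1-s+k)$ for $k \geq 0$, i.e.\ in the upper half plane, so it is analytic in this strip. The factor $\Gamma(-2ir)$ has poles at $r = -ik/2$ for $k \geq 0$: the pole at $r = 0$ is killed by the $r^{2}$ weight in the integrand, while the poles at $r = -ik/2$ for $k = 1, 2, \ldots, 2N$ are precisely cancelled by the zeros of $h$ guaranteed by condition \eqref{conditions on h 1}. The Gauss hypergeometric function $F(1-s+ir, 1-s+ir, 1+2ir; -1/x_{\pm})$ stays analytic in the strip because the parameter $1+2ir$ has real part $1 - 2\Im(r) \geq 1$ there, avoiding the non-positive integer values; moreover $-1/x_{\pm} \to 0$ as $|n| \to \infty$, so this factor tends to $1$ and contributes no growth.

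On the shifted contour, absolute convergence of the $r$-integral follows from the rapid decay of $h$ in horizontal strips provided by \eqref{conditions on h 2}, together with the standard cancellation of $\cosh(\pi r)$ against the exponential decay of $\Gamma(-2ir)/\Gamma(s-ir)$; Stirling's formula \eqref{Stirling2} shows that the surviving contribution from the Gamma ratio is only polynomial in $|\Re(r)|$, which is easily absorbed by $h$. The main obstacle is the careful bookkeeping required to check these polynomial Stirling estimates and to confirm that the cancellation of the half-integer poles of $\Gamma(-2ir)$ by the zeros of $h$ is complete in the relevant strip; once these are in place, the $|n|$-aspect bound follows directly from the explicit power of $|n|$ extracted from $x_{\pm}^{-ir}$ and $x_{\pm}^{-(1-s)}$.
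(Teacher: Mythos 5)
Your core mechanism is the same as the paper's: represent $I(n,\tau,s)$ via Lemma \ref{lem i2}, shift the $r$-contour down to height $-N-1/2$, use the zeros of $h$ prescribed by \eqref{conditions on h 1} to cancel the poles crossed, and read off the power of $x_{\pm}\gg|n|^{2}$ from the oscillating factor. Your pole bookkeeping is correct: the poles of $\Gamma(-2ir)$ at $r=-ik/2$, $1\le k\le 2N$, are exactly matched by the integer and half-integer zeros of $h$, and the exponential factors in $\cosh(\pi r)\Gamma(1-s+ir)\Gamma(-2ir)/\Gamma(s-ir)$ do cancel by Stirling. Where you diverge from the paper is in the treatment of the hypergeometric factor. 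The paper does not shift the contour in \eqref{I integral hypergeom2} directly; it first inserts the Euler integral representation \cite[15.6.1]{HMF} and the reflection formula, arriving at \eqref{I integral hypergeom4}, in which the hypergeometric function has been replaced by an explicit $y$-integral and the only $r$-dependence left is the clean power $\bigl(y(1-y)/(y+x_{\pm})\bigr)^{ir}$ together with $1/(\sinh(\pi r)\Gamma(s+ir)\Gamma(s-ir))$; the contour shift is then immediate. Your version keeps $F(1-s+ir,1-s+ir,1+2ir;-1/x_{\pm})$ inside the shifted integral, and the justification "$-1/x_{\pm}\to 0$, so this factor tends to $1$" is the soft spot: it must hold uniformly as $|\Re(r)|\to\infty$ along the shifted line (the $k$-th term of the series behaves like $(O(r)/x_{\pm})^{k}/k!$, so $F=\exp(O(|r|/x_{\pm}))$, which is absorbed by the Gaussian decay \eqref{conditions on h 2} but needs to be said), and for the finitely many $n$ with $|1/x_{\pm}|\ge 1$ the series definition does not even apply, so one must invoke the analytic continuation of $F$ — for instance via the very Euler integral the paper uses. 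With that uniform bound supplied, your argument closes and in fact yields the same stronger decay $\ll|n|^{-2N-1-2(1-\Re s)+\epsilon}$ that the paper's computation implicitly gives; the route through \eqref{I integral hypergeom4} simply avoids having to estimate $F$ at complex parameters at all.
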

\begin{proof}
Applying \cite[15.6.1]{HMF}, we obtain for $0<\Re(s)<1$
\begin{multline}\label{Euler intagral}
F\left(1-s+ir,1-s+ir,1+2ir;\frac{-1}{x_{\pm}}\right)=\\
\frac{\Gamma(1+2ir)}{\Gamma(s+ir)\Gamma(1-s+ir)}
\int_0^1
\frac{y^{-s}(1-y)^{s-1}}{(1+y/x_{\pm})^{1-s}}\left(\frac{y(1-y)}{1+y/x_{\pm}}\right)^{ir}dy.
\end{multline}
Thus substituting \eqref{Euler intagral} to \eqref{I integral hypergeom2} gives
\begin{multline}\label{I integral hypergeom3}
I(n,\tau,s)=\sum_{\pm}\frac{1}{8(x_{\pm}\cos^2\tau)^{1-s}}
\int_0^1
\frac{y^{-s}(1-y)^{s-1}}{(1+y/x_{\pm})^{1-s}}\\\times
\int_{-\infty}^{\infty}r^2h(r)\cosh(\pi r)
\frac{\Gamma(1+2ir)\Gamma(-2ir)}{\Gamma(s+ir)\Gamma(s-ir)}
\left(\frac{y(1-y)}{y+x_{\pm}}\right)^{ir}drdy.
\end{multline}
Applying \cite[5.5.3]{HMF}, we obtain
\begin{multline}\label{I integral hypergeom4}
I(n,\tau,s)=\sum_{\pm}\frac{\pi i}{16(x_{\pm}\cos^2\tau)^{1-s}}
\int_0^1
\frac{y^{-s}(1-y)^{s-1}}{(1+y/x_{\pm})^{1-s}}\\\times
\int_{-\infty}^{\infty}\frac{r^2h(r)}{\sinh(\pi r)\Gamma(s+ir)\Gamma(s-ir)}
\left(\frac{y(1-y)}{y+x_{\pm}}\right)^{ir}drdy.
\end{multline}
Using the property \eqref{conditions on h 1} we move the line of integration to $\Im(r)=-N-1/2$. The statement follows.
\end{proof}


\subsection{Exact formula for $1/2\le\Re{s}<1$}

The main result of this section is the following theorem.
\begin{thm}\label{1moment exact formula} For $1/2\le\Re{s}<1$ we have
\begin{multline}\label{firstmoment=MT+Sigma-ContSpectr2}
M_1(s)=
\frac{4\zeta_{\kk}(s)}{\pi^2}\int_{-\infty}^{\infty}r^2h(r)dr+\Sigma(s)
-32\pi^2\frac{\zeta_{\kk}(s)\zeta_{\kk}(2s-1)}{\zeta_{\kk}(2s)\zeta_{\kk}(2-s)}h(i(s-1))\\
-8\pi\frac{\zeta^2_{\kk}(s)}{\zeta_{\kk}(2s)}\int_{-\infty}^{\infty}
\frac{\zeta_{\kk}(s+ir)\zeta_{\kk}(s-ir)}{\zeta_{\kk}(1+ir)\zeta_{\kk}(1-ir)}h(r)dr,
\end{multline}
where $\Sigma(s)$ is given by \eqref{Sigma1} with $I(n, \tau, s)$ as in Lemmas \ref{lem i1} and \ref{lem i2}.
\end{thm}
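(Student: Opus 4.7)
The plan is to start from the identity \eqref{firstmoment=MT+Sigma-ContSpectr}, valid for $\Re(s)>3/2$, and to analytically continue every term to the strip $1/2\le\Re(s)<1$. The left-hand side $M_1(s)$ is meromorphic on this larger region, since the rapid decay \eqref{conditions on h 2} of $h$ combined with standard polynomial bounds for $L(u_j\otimes u_j,s)$ in the critical strip yields convergence of the spectral sum. Among the terms on the right, the main term $(4/\pi^2)\zeta_\kk(s)\int r^2h(r)\,dr$ is manifestly meromorphic, and by Lemma~\ref{Sigma Lemma} the exceptional contribution $\Sigma(s)$ has already been analytically continued to $\Re(s)>1/2$ via formula \eqref{Sigma1}, with the weights $I(n,\tau,s)$ supplied by Lemmas~\ref{lem i1} and \ref{lem i2} and the absolute convergence of the $n$-sum furnished by Lemma~\ref{lemma I estimate 1}.

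The only subtle step is the continuation of the continuous-spectrum integral
\[
\mathcal{J}(s) = \int_{-\infty}^{\infty}\frac{\zeta_\kk(s+ir)\,\zeta_\kk(s-ir)}{\zeta_\kk(1+ir)\,\zeta_\kk(1-ir)}\,h(r)\,dr.
\]
Its integrand $F(r,s)$ has two simple poles in the $r$-variable, at $r_0=i(s-1)$ coming from $\zeta_\kk(s+ir)$ and at $r_1=-i(s-1)$ coming from $\zeta_\kk(s-ir)$. For $\Re(s)>3/2$ these poles lie at $|\Im r_j|>1/2$, so the real line is a safe contour; as $\Re(s)$ decreases through $1$, both poles cross the real axis simultaneously, with $r_0$ moving from the upper to the lower half-plane and $r_1$ from the lower to the upper. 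The analytic continuation of $\mathcal{J}(s)$ to $1/2<\Re(s)<1$ is therefore represented by a contour that preserves the original relative position of the poles, passing below $r_0$ and above $r_1$. Comparison with the real-line integral via Cauchy's residue theorem yields
\[
\mathcal{J}(s) = \int_{-\infty}^{\infty}F(r,s)\,dr + 2\pi i\bigl(\operatorname{Res}_{r=r_0} F - \operatorname{Res}_{r=r_1} F\bigr),
\]
while the classical non-vanishing of $\zeta_\kk$ on $\Re(w)=1$ ensures that no spurious poles from the denominator factors $\zeta_\kk(1\pm ir)$ intervene during the deformation.

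The residue computation is direct: substituting $s+ir_0=1$, $s-ir_0=2s-1$, $1+ir_0=2-s$, $1-ir_0=s$ together with the evenness $h(-r)=h(r)$ used at $r_1$, shows that both residues are constant multiples of $\zeta_\kk(2s-1)\,h(i(s-1))/[\zeta_\kk(s)\zeta_\kk(2-s)]$, carrying opposite signs from the opposite differentials $d(s\pm ir)/dr=\pm i$; hence the difference doubles the magnitude. Multiplying the resulting correction by the prefactor $-8\pi\zeta_\kk^2(s)/\zeta_\kk(2s)$ and simplifying produces the advertised extra term
\[
-32\pi^2\,\frac{\zeta_\kk(s)\,\zeta_\kk(2s-1)}{\zeta_\kk(2s)\,\zeta_\kk(2-s)}\,h(i(s-1))
\]
of \eqref{firstmoment=MT+Sigma-ContSpectr2}. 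The main obstacle is the careful sign bookkeeping in the contour deformation, together with verifying that the double crossing at $\Re(s)=1$ is regular (so that the $\zeta_\kk(s)$- and $\zeta_\kk(2s-1)$-poles of the new residue term are compatible with the known singularity structure of $M_1(s)$ at $s=1$). Once this is settled, both sides of \eqref{firstmoment=MT+Sigma-ContSpectr2} are meromorphic on $\Re(s)>1/2$, they coincide on the open set $\Re(s)>3/2$, and hence agree throughout by analytic continuation.
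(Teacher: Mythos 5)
Your treatment of the open strip $1/2<\Re(s)<1$ matches the paper's: the identity \eqref{firstmoment=MT+Sigma-ContSpectr} valid for $\Re(s)>3/2$ is continued term by term, $\Sigma(s)$ is continued via Lemma \ref{Sigma Lemma} with absolute convergence of the $n$-sum from Lemma \ref{lemma I estimate 1}, and the extra term $-32\pi^2\zeta_{\kk}(s)\zeta_{\kk}(2s-1)h(i(s-1))/(\zeta_{\kk}(2s)\zeta_{\kk}(2-s))$ arises as the residue picked up when the poles of $\zeta_{\kk}(s\pm ir)$ cross the real $r$-axis at $\Re(s)=1$ in the continuous-spectrum integral (the paper delegates exactly this computation to \cite[Theorem 7.3]{BF3}).

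However, there is a genuine gap: the theorem asserts the formula for $1/2\le\Re(s)<1$, and the boundary line $\Re(s)=1/2$ is precisely the case needed for Theorem \ref{thm:momentoscill}, yet your argument only produces meromorphic continuation to the open half-plane $\Re(s)>1/2$ and then invokes uniqueness of analytic continuation there. Reaching the line $\Re(s)=1/2$ is not automatic. The representation \eqref{I integral hypergeom1} for $I(n,\tau,s)$ carries the factor $(\cos\tau)^{2s-2}$, and $\int_0^{\pi/2}(\cos\tau)^{2s-2}\,d\tau$ diverges exactly at $\Re(s)=1/2$; one must instead use \eqref{I integral hypergeom2}, which is harmless as $\tau\to\pi/2$ only when $x_{\pm}(n,\tau)$ stays away from $0$. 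By \eqref{small n} this fails precisely for $n=\pm1,\pm2$, and for $n=\pm2$ the $\tau$-integral genuinely produces a factor $\Gamma(s-1/2)$ (see \eqref{I integral n=2 integral tau}), i.e.\ a pole at $s=1/2$. The formula survives at $s=1/2$ only because this contribution is multiplied by $\zeta_{\kk}(s)/\zeta_{\kk}(2s)$ in \eqref{Sigma1}, and the pole of $\zeta_{\kk}(2s)$ at $s=1/2$ supplies a compensating zero. This cancellation is the substance of the paper's proof of the boundary case and is absent from your proposal; without it the claim at $\Re(s)=1/2$ is unjustified. (Your sign bookkeeping for the residue term is also left unverified, but you flag that yourself and it is a routine check.)
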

\begin{proof}
The case $1/2<\Re{s}<1$ follows mainly from \eqref{firstmoment=MT+Sigma-ContSpectr} and Lemma \ref{Sigma Lemma}, where the analytic continuation of $\Sigma(s)$ is proven. The last thing to show is that the  integral on the right-hand side of \eqref{firstmoment=MT+Sigma-ContSpectr} can be continued analytically to the region of interest. This can be done in the same way as in \cite[Theorem 7.3]{BF3}. Furthermore, Lemma \ref{lemma I estimate 1} guarantees the absolute convergence of the sums over $n$ in $\Sigma(s)$ for $0<\Re{s}<1.$
\par
To prove the analytic continuation of $M_1(s)$ to $\Re(s)=1/2$, and specially to the point $s=1/2$, it is required to  examine the integral
\begin{equation}\label{I integral1}
\int_0^{\pi/2} I(n,\tau,s)d\tau
\end{equation}
more carefully. According to \eqref{I integral hypergeom1}, the function $I(n,\tau,s)$ has a multiple $(\cos\tau)^{2-2s}$, which can have a pole at the point $s=1/2.$ But note that as $\tau\rightarrow\pi/2$, the argument of the hypergeometric function in
\eqref{I integral hypergeom1}, in most cases,  tends to $-\infty.$ In such cases it is more reasonable to use the expression \eqref{I integral hypergeom2} for the function $I(n,\tau,s).$ Indeed, if $(|n|c_{\pm})^2\neq0$ for $\tau=\pi/2$ the right-hand side of
\eqref{I integral hypergeom2} does not cause any problem. So we are left to analyse for which $n$ one has $(|n|c_{\pm})^2=0$ with $\tau=\pi/2.$ Due to \eqref{c def} we have
\begin{multline}\label{small n}
(|n|c_{\pm})^2=|n|^2+4\sin^2\tau\pm4|n|\sin\tau\cos\vartheta\ge\\
|n|^2+4\sin^2\tau-4|n|\sin\tau
=(|n|-2\sin\tau)^2,
\end{multline}
and the equality may be reached only for $n\in\Z$ since it is required that $\cos\vartheta=\pm1.$ As we see $(|n|c_{\pm})^2=0$ only if $n=\pm1$ or $n=\pm2.$ In the first case: $n=\pm1$, the root is at the point $\tau=\pi/6$, and this does not cause any problem in \eqref{I integral1}. In the second case: $n=\pm2$,  we obtain
\begin{multline}\label{small n2}
\frac{(|n|c_{\pm})^2}{(2\cos\tau)^{2}}=\frac{(1-\sin\tau)^2}{(\cos\tau)^{2}}=\tan^2\left(\frac{\pi}{4}-\frac{\tau}{2}\right)=
\frac{1}{4}\left(\frac{\pi}{2}-\tau\right)^2+O\left(\left(\frac{\pi}{2}-\tau\right)^4\right).
\end{multline}
Thus for $n=\pm2$ the leading term of $I(n,\tau,s)$ as $\tau\rightarrow\pi/2$ is given by (see \eqref{I integral hypergeom1})
\begin{multline}\label{I integral n=2}
I(\pm2,\tau,s)=
\frac{1}{4}\int_{-\infty}^{\infty}r^2h(r)\cosh(\pi r)
\frac{\Gamma(1-s-ir)\Gamma(1-s+ir)}{4(\cos\tau)^{2-2s}}dr\\
+O\left(\left(\frac{\pi}{2}-\tau\right)^2\right).
\end{multline}
Therefore,  using \cite[5.12.2]{HMF} we have
\begin{multline}\label{I integral n=2 integral tau}
\int_0^{\pi/2} I(\pm2,\tau,s)d\tau=
\frac{\Gamma(1/2)\Gamma(s-1/2)}{32\Gamma(s)}\\\times
\int_{-\infty}^{\infty}r^2h(r)\cosh(\pi r)
\Gamma(1-s-ir)\Gamma(1-s+ir)dr+
O(1).
\end{multline}
According to \eqref{L beautiful 0}, the contribution of the summands with $n=\pm2$ in \eqref{Sigma1} to $\Sigma(s)$ is given by
\begin{multline}\label{Sigma n=2}
\frac{8(2\pi)^{2s-1}}{\pi^2}
\frac{\zeta_{\kk}(s)\zeta_{\kk}(2s-1)}{\zeta_{\kk}(2s)}
\frac{\Gamma(1/2)\Gamma(s-1/2)}{4\Gamma(s)}\\\times
\int_{-\infty}^{\infty}r^2h(r)\cosh(\pi r)
\Gamma(1-s-ir)\Gamma(1-s+ir)dr+
O(1).
\end{multline}
Since $\zeta_{\kk}(2s)$ has a pole at $s=1/2$, the whole expression \eqref{Sigma n=2}, as well as $\Sigma(s)$, are holomorphic at the point $s=1/2.$
\end{proof}

\begin{rem}
If we consider the first moment of symmetric square $L$-functions instead of Rankin-Selberg $L$-functions, among some other minor changes, it is required to multiply  the right-hand side of the equation \eqref{firstmoment=MT+Sigma-ContSpectr2} by $\zeta_{\kk}(2s).$ In doing so, the factor $\zeta_{\kk}(2s)$ disappears from \eqref{Sigma n=2}. Nevertheless, the analogue of
\eqref{firstmoment=MT+Sigma-ContSpectr2} is still true at the point $s=1/2$ because the poles from two summands cancel each other. This means that the expression
\begin{multline}\label{poles cancellation1}
\frac{4\zeta_{\kk}(s)\zeta_{\kk}(2s)}{\pi^2}\int_{-\infty}^{\infty}r^2h(r)dr+
\frac{8(2\pi)^{2s-1}}{\pi^2}
\frac{\zeta_{\kk}(s)\zeta_{\kk}(2s-1)\Gamma(1/2)\Gamma(s-1/2)}{4\Gamma(s)}\\\times
\int_{-\infty}^{\infty}r^2h(r)\cosh(\pi r)
\Gamma(1-s-ir)\Gamma(1-s+ir)dr
\end{multline}
is holomorphic at  $s=1/2.$ To prove this, let $s=1/2+u$ and apply the functional equation
\begin{equation}\label{dedeking func.eq}
\zeta_{\kk}(2u)=\pi^{4u-1}\zeta_{\kk}(1-2u)\frac{\Gamma(1-2u)}{\Gamma(2u)}.
\end{equation}
Thus \eqref{poles cancellation1} transforms into
\begin{multline}\label{poles cancellation2}
\zeta_{\kk}(1+2u)\frac{4\zeta_{\kk}(1/2+u)}{\pi^2}\int_{-\infty}^{\infty}r^2h(r)dr+\\
\zeta_{\kk}(1-2u)\frac{2(2\pi^3)^{2u}}{\pi^3}
\frac{\zeta_{\kk}(1/2+u)\Gamma(1/2)}{\Gamma(1/2+u)\Gamma(1-2u)}\frac{\Gamma(u)}{\Gamma(2u)}\\\times
\int_{-\infty}^{\infty}r^2h(r)\cosh(\pi r)
\Gamma(1/2-u-ir)\Gamma(1/2-u+ir)dr.
\end{multline}
To show that the expression \eqref{poles cancellation2} is holomorphic at  the point $u=0$, it is enough to prove that the coefficient before $u^{-1}$ in the Laurent expansion is zero. Hence it is required that
\begin{multline}\label{poles cancellation3}
\frac{4\zeta_{\kk}(1/2)}{\pi^2}\int_{-\infty}^{\infty}r^2h(r)dr-
\frac{4\zeta_{\kk}(1/2)}{\pi^3}\\\times
\int_{-\infty}^{\infty}r^2h(r)\cosh(\pi r)
\Gamma(1/2-ir)\Gamma(1/2+ir)dr=0,
\end{multline}
and this follows immediately from \cite[5.4.4]{HMF}.
\end{rem}


\section{Proof of Theorems \ref{thm:spec.exp.sum new bound} and \ref{thmPrimeGeodesic new bound}}
Following the paper of Ivic and Jutila \cite{IvJut}, let us define
\begin{equation}\label{omega def}
\omega_T(r)=\frac{1}{G\pi^{1/2}}\int_T^{2T}\exp\left(-\frac{(r-K)^2}{G^2}\right)dK.
\end{equation}
For an arbitrary $A>1$ and some $c>0$ we have (see \cite{IvJut})
\begin{equation}\label{omega1}
\omega_T(r)=1+O(r^{-A})\text{ if } T+cG\sqrt{\log T}<r<2T-cG\sqrt{\log T},
\end{equation}
\begin{equation}\label{omega2}
\omega_T(r)=O((|r|+T)^{-A})\text{ if }
r<T-cG\sqrt{\log T}\text{ or } r>2T+cG\sqrt{\log T},
\end{equation}
and otherwise
\begin{equation}\label{omega3}
\omega_T(r)=1+O(G^3(G+\min(|r-T|,|r-2T|))^{-3}).
\end{equation}
Using the Weyl law \cite[Section 5.5]{EGM}, we obtain
\begin{equation}
S(T,X)=\sum_{0<i\ll\log T}\sum_{r_j}X^{ir_j}\omega_{T_i}(r_j)+O(T^{2+\epsilon}G),
\end{equation}
where $T_i=T2^{-i}.$ Following the approach of Koyama (see \cite{Koyama}, \cite{ChatCherLaak}) and applying Theorem \ref{thm:momentoscill}, we prove Theorem \ref{thm:spec.exp.sum new bound} and derive as an immediate consequence
Theorem \ref{thmPrimeGeodesic new bound}.

\section{Proof of Theorem \ref{thm:momentoscill}}
For  an arbitrary large integer $N$ we define
\begin{equation}\label{qN def}
q_N(r)=\frac{(r^2+1/4)\ldots(r^2+(N-1/2)^2)}{(r^2+100N^2)^N}
\frac{(r^2+1)\ldots(r^2+N^2)}{(r^2+100N^2)^N},
\end{equation}
\begin{multline}\label{hN def}
h(K,N,T,X;r)=X^{ir}q_N(r)\exp\left(-\frac{(r-K)^2}{G^2}\right)+
X^{-ir}q_N(r)\exp\left(-\frac{(r+K)^2}{G^2}\right),
\end{multline}
where $T^{\epsilon}\ll G\ll T^{\theta/2+\epsilon}.$

 For simplicity, we will write  $h(*;r_j)$ instead of $h(K,N,T,X;r).$
\begin{lem}
Let $X\gg1$ and $X^{\epsilon}\le T\le X^{1/2+\epsilon}.$ Then for $s=1/2+it$, $|t|\ll T^{\epsilon}$ we have
\begin{multline}\label{1moment smoth1}
\sum_{r_j}\frac{r_j}{\sinh(\pi r_j)}\omega_T(r_j)X^{ir_j}L(u_{j}\otimes u_{j},s)=\\
\frac{1}{G\pi^{1/2}}\int_T^{2T}
\sum_{r_j}\frac{r_j}{\sinh(\pi r_j)}h(*;r_j)L(u_{j}\otimes u_{j},s)dK+O(T^{7/6+\epsilon}).
\end{multline}
\end{lem}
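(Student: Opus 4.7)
The plan is to unfold the Gaussian representation \eqref{omega def} of $\omega_T$, interchange the $K$-integral with the spectral sum on the right-hand side, and check that the resulting weight matches $\omega_T(r_j)X^{ir_j}$ up to the error introduced by the regularizing prefactor $q_N$ in \eqref{qN def}. Swapping the order of integration and summation is legitimate because the Gaussian in $K$ is absolutely integrable and $h$ is of rapid decay in $r$. For each fixed $r_j>0$, the definition \eqref{omega def} immediately gives
$$\frac{1}{G\pi^{1/2}}\int_T^{2T} e^{-(r_j-K)^2/G^2}\,dK = \omega_T(r_j),$$
whereas the companion integral produced by the $X^{-ir}$ piece of $h$ in \eqref{hN def} satisfies
$$\frac{1}{G\pi^{1/2}}\int_T^{2T} e^{-(r_j+K)^2/G^2}\,dK \ll \exp(-T^2/G^2),$$
which is super-polynomially small in $T$ since $r_j+K\ge T$ and $G\ll T^{\theta/2+\epsilon}$. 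Consequently
$$\frac{1}{G\pi^{1/2}}\int_T^{2T} h(*;r_j)\,dK = q_N(r_j)\,X^{ir_j}\,\omega_T(r_j) \;+\; O_A(T^{-A})$$
for any $A>0$.

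Comparing with the left-hand side of \eqref{1moment smoth1}, the lemma thereby reduces to bounding
$$E := \sum_{r_j} \frac{r_j}{\sinh(\pi r_j)}\bigl(q_N(r_j)-1\bigr)\omega_T(r_j)X^{ir_j}L(u_j\otimes u_j,s).$$
By \eqref{omega1}--\eqref{omega3} the weight $\omega_T(r_j)$ is negligible outside the window $r_j\in[T-CG\sqrt{\log T},\,2T+CG\sqrt{\log T}]$. On that window the factorization \eqref{qN def} of $q_N$ as a ratio of polynomials in $r^2$ with equal leading coefficients yields $|q_N(r_j)-1|\ll_N T^{-2}$, with $N$ fixed large enough for $h$ to satisfy the Kuznetsov hypotheses \eqref{conditions on h 1}--\eqref{conditions on h 2}. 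Hence
$$|E| \ll T^{-2}\sum_{T\le r_j\le 2T} \frac{r_j}{\sinh(\pi r_j)}\bigl|L(u_j\otimes u_j,\,1/2+it)\bigr|.$$

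The main obstacle, and the step that ultimately dictates the exponent $7/6$, is a spectral first-moment bound for the Rankin-Selberg $L$-functions in the short window $r_j\in[T,2T]$. I would attack this via Cauchy--Schwarz against a spectral second-moment estimate of the strength underlying \eqref{CCL estimate on spec.sum} (for example, the spectral large sieve combined with the specialization $X=1$ of the latter), verifying that the resulting bound on the first moment is of the order $T^{19/6+\epsilon}$, so that the prefactor $T^{-2}$ produces $|E|\ll T^{7/6+\epsilon}$, as claimed in \eqref{1moment smoth1}.
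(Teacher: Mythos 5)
Your proposal is correct and follows essentially the same route as the paper's (one-line) proof: unfold the Gaussian in \eqref{omega def}, discard the $X^{-ir}$ piece of $h$, use $q_N(r)=1+O(r^{-2})$ on the spectral window, and reduce to a first-moment bound $\sum_{T\le r_j\le 2T}\frac{r_j}{\sinh(\pi r_j)}|L(u_j\otimes u_j,1/2+it)|\ll T^{19/6+\epsilon}$, which the paper simply cites as \cite[Corollary 3.4]{ChatCherLaak}. That cited corollary is itself obtained by the Cauchy--Schwarz-against-the-second-moment argument you sketch (second moment $\ll T^{10/3+\epsilon}$ times Weyl count $T^3$ gives $T^{19/6}$, and $T^{-2}\cdot T^{19/6}=T^{7/6}$), so your numerology matches.
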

\begin{proof}
This follows from the definitions of functions $\omega_T(r_j)$ and $h(*;r)$ (see \eqref{omega def}, \eqref{hN def}), the fact that $q_N(r)=1+O(r^{-2})$ and \cite[Corollary 3.4]{ChatCherLaak}.
\end{proof}
For the sum on the right-hand side of \eqref{1moment smoth1} we apply Theorem \ref{1moment exact formula}.  For simplicity, we consider only $s=1/2+it$ with $1\le|t|\ll T^{\epsilon}.$ The cases $|t|\le1$ and $t=0$ require only minor changes.
\par
To evaluate the integrals we will frequently use equations \cite[3.323.2]{GR} and \cite[3.462.2]{GR}, namely
\begin{equation}\label{exp integral1}
\int_{-\infty}^{\infty}\exp(-p^2x^2+qx)dx=\frac{\pi^{1/2}}{p}\exp\left(\frac{q^2}{4p^2}\right),
\quad\hbox{if}\quad\Re(p^2)>0,
\end{equation}
\begin{equation}\label{exp integral2}
\int_{-\infty}^{\infty}x^n\exp(-x^2+qx)dx=P_n(q)\exp\left(\frac{q^2}{4}\right),
\end{equation}
where $n$ is a nonnegative integer and $P_n(q)$ is a polynomial of degree $n.$ Another important ingredient of our proof is the following lemma.
\begin{lem}\label{lemma on small I}
Suppose that $X\gg1$ and $X^{\epsilon}\le T\le X^{1/2+\epsilon}.$ Let $s=1/2+it$, $1\ll|t|\ll T^{\epsilon}$ and $n\neq0,\pm1,\pm2.$ If
\begin{equation}\label{conditions for Ismall}
x_{\pm}\gg X^{1+\epsilon}\quad\hbox{or}\quad X^{-1+\epsilon}\ll x_{\pm}\ll X^{1-\epsilon},
\end{equation}
where $x_{\pm}$ is defined by \eqref{x def},
then  for any fixed $A>1$ we have
\begin{equation}\label{I integral small}
\frac{1}{G}\int_T^{2T}I(n,\tau,s)dK\ll X^{-A}.
\end{equation}
\end{lem}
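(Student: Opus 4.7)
The plan is to exploit the analyticity of the Gaussian factor in $h(*;r)$ by shifting the $r$-contour of the inner integral in $I(n,\tau,s)$. After substituting one of the representations from Lemma \ref{lem i1} or Lemma \ref{lem i2} and interchanging orders of integration, the $K$-integral of $e^{-(r-K)^2/G^2}$ produces the smoothed weight $\sqrt{\pi}\,\omega_T(r)$, whose analytic continuation satisfies $|\omega_T(r+iv)|\le e^{v^2/G^2}\omega_T(r)$. The polynomial $q_N(r)$ from \eqref{qN def} was tailored to cancel all poles of $\sinh(\pi r)^{-1}$ and $\Gamma(s\pm ir)^{-1}$ inside $|\Im r|<N+1/2$, so the $r$-contour may be pushed up to height $N-1/2$ without encountering poles. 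Choosing $N$ sufficiently large in terms of $A$ is what ultimately delivers the $X^{-A}$ gain.

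For the first case $x_\pm\gg X^{1+\epsilon}$, I would apply representation \eqref{I integral hypergeom2} directly. The argument $-1/x_\pm$ of the hypergeometric is tiny, so $F(1-s+ir,1-s+ir,1+2ir;-1/x_\pm)$ is essentially $1$ with only mild $r$-dependence, and the explicit factor $x_\pm^{-ir}$ combines with $X^{\pm ir}$ from $h(*;r)$ to produce a pure phase $(X/x_\pm)^{\pm ir}$ whose logarithm has absolute value at least $\epsilon\log X$. Shifting the $r$-contour to $\Im r=\pm(N-1/2)$ and applying the bound on $|\omega_T(r+iv)|$ yields a factor $\ll X^{-(N-1/2)\epsilon}e^{(N-1/2)^2/G^2}$, and the exponential factor is $O(1)$ since $G\gg T^\epsilon$ while $N$ is fixed. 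Taking $N\ge A/\epsilon$ gives the desired bound.

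For the second case $X^{-1+\epsilon}\ll x_\pm\ll X^{1-\epsilon}$, the hypergeometric is no longer close to $1$, so I would further substitute the Euler integral used in the proof of Lemma \ref{lemma I estimate 1}, obtaining representation \eqref{I integral hypergeom4}. The inner $r$-integrand then carries, for each fixed $y\in(0,1)$, the phase $(Xy(1-y)/(y+x_\pm))^{\pm ir}$. Set $\phi(y):=\log(Xy(1-y)/(y+x_\pm))$ and split the $y$-integral according to whether $|\phi(y)|$ exceeds $G^{-1}\sqrt{A\log X}$ (non-resonant range) or not (resonant range near the two zeros of $\phi$, located at $y_-\approx x_\pm/X$ and $y_+\approx 1-1/X$). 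On the non-resonant range, shifting the $r$-contour by the optimal $v$ of order $G^2\phi(y)$ produces decay $e^{-G^2\phi(y)^2/4}\ll X^{-A}$. On the resonant range, the hypotheses on $x_\pm$ keep $y_\pm$ interior to $(0,1)$ and bounded away from the critical point $y_\ast$ where $\phi'$ vanishes, so $|\phi'(y_\pm)|$ is sizeable and the $y$-measure of each resonance band is $\ll \sqrt{A\log X}/(G|\phi'(y_\pm)|)$. Combining this small measure with the integrability of the weight $y^{-s}(1-y)^{s-1}/(1+y/x_\pm)^{1-s}$ and with the residual Stirling phase of $[\Gamma(s+ir)\Gamma(s-ir)]^{-1}$, which is $\sim-2r\log r$ and therefore supplies further $r$-oscillation even at $\phi(y)=0$ (pushing the joint stationary point to $r_\ast\sim\sqrt{X}$, outside the support of $\omega_T$ when $T\le X^{1/2}$), yields the claimed bound.

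The principal obstacle will be the bookkeeping in the second case: quantifying the interplay between the narrow $y$-neighbourhood of each resonance, the Stirling-induced residual oscillation in $r$, and the $r$-contour shift, so as to demonstrate super-polynomial decay in $X$. A secondary technical point is selecting $N$ in the definition of $q_N$ large enough in terms of $A$ to permit contour shifting to the required height while verifying that the polynomial growth of $q_N$ and the corresponding weight in $h(*;r)$ do not spoil the estimates.
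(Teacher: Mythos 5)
Your treatment of the first range $x_{\pm}\gg X^{1+\epsilon}$ is essentially the paper's argument (the paper inserts the Euler-integral form \eqref{I integral hypergeom4}, shifts the $r$-contour to $\Im r=-N-1/2$ using $q_N$ to avoid poles, and uses that $Xy(1-y)/(y+x_{\pm})\ll X^{-\epsilon}$ uniformly in $y$), and your identification of the non-resonant part of the second range via the factor $\exp\bigl(-\tfrac{G^2}{4}\log^2(Xy(1-y)/(y+x_{\pm}))\bigr)$ also matches the paper. The gap is in your resonant range. You claim that $[\Gamma(s+ir)\Gamma(s-ir)]^{-1}$ carries a residual Stirling phase $\sim-2r\log r$, creating a joint stationary point at $r_{*}\sim\sqrt{X}$ outside the support of the weight. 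That is false: with $s=1/2+it$ the phases of $\Gamma(s+ir)$ and $\Gamma(s-ir)$ are $(t+r)\log|t+r|-(t+r)$ and $(t-r)\log|t-r|-(t-r)$, and the $r\log r$ terms cancel in the product, leaving only $2t\log r+O(\cdot)$ — this is precisely \eqref{Taylor1}--\eqref{Taylor3}. Since $|t|\ll T^{\epsilon}$, this residual phase is essentially constant over $|r-K|\ll G\log X$ and yields no cancellation. Hence at a resonance point $\phi(y)=0$ the $r$- and $K$-integrals contribute their full size $\asymp T^{3}$, and multiplying by the band measure $\asymp x_{\pm}\sqrt{\log X}/(GX)$ and the size of the $y$-weight gives roughly $T^{3}/(GX^{1/2})$, which is polynomially large rather than $O(X^{-A})$.

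What actually rescues the resonant range in the paper is a second oscillation, in the $y$-variable. The resonance near $y_{-}$ forces $y\asymp x_{\pm}/X\ll X^{-\epsilon}$, where the factor $y^{-s+ir}$ oscillates with phase derivative $r/y$, so its total phase variation across the band of width $\asymp (x_{\pm}/X)(\log X)/G$ is $\asymp r(\log X)/G\gg T^{1-\theta/2-\epsilon}$. Integrating by parts $j$ times against $y^{-s+ir}$, with a smooth cutoff $\chi_a$ whose derivatives satisfy \eqref{j norms of chi a,b}, produces the gains $(x_{\pm}/X)^{j}+X^{-j}+(G/T)^{j}\ll X^{-\epsilon j}$ of \eqref{I integral estimate7}; the resonance near $y_{+}$ is handled symmetrically by integrating against $(1-y)^{s-1+ir}$. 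You would need to add this step (or an equivalent non-stationarity argument in $y$) to close the proof; the cancellation mechanism you propose does not exist.
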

\begin{proof}
According to \eqref{small n}, the condition $n\neq0,\pm1,\pm2$ guarantees that 
\begin{equation*}
4x_{\pm}\cos^2\tau=(|n|c_{\pm})^2\neq0.
\end{equation*}
Consider the case $x_{\pm}\gg X^{1+\epsilon}.$ Using \eqref{I integral hypergeom4}, we obtain
\begin{multline}\label{I integral estimate2}
\frac{1}{G}\int_T^{2T}I(n,\tau,s)dK\ll
\frac{1}{G}\int_T^{2T}\int_0^1
\frac{y^{-s}(1-y)^{s-1}}{(1+y/x_{\pm})^{1-s}}\\\times
\int_{-\infty}^{\infty}\frac{r^2h(*;r)}{\sinh(\pi r)\Gamma(s+ir)\Gamma(s-ir)}
\left(\frac{y(1-y)}{y+x_{\pm}}\right)^{ir}drdydK.
\end{multline}
The function $h(*;r)$ defined by \eqref{hN def} consists of two summands. We study only the summand with the multiple $X^{ir}$. The second summand can be treated similarly. 

Moving the line of integration in the integral over $r$ to $\Im(r)=-N-1/2$, we do not cross any pole due to the presence of the function $q_N(r)$ defined in \eqref{qN def}. Since $|t|\ll T^{\epsilon}$ and $r\asymp K\asymp T$, the Stirling formula \eqref{Stirling2} implies that $|\Gamma(s+ir)\Gamma(s-ir)|\asymp\exp(-\pi|r|).$ Thus we obtain
\begin{equation}\label{I integral estimate2.1}
\frac{1}{G}\int_T^{2T}I(n,\tau,s)dK\ll
\left(\frac{X}{x_{\pm}}\right)^{N+1/2}.
\end{equation}
Since  $N$ can be chosen to be an arbitrary constant, we prove \eqref{I integral small} for  $x_{\pm}\gg X^{1+\epsilon}.$  Note that in the same way we can prove \eqref{I integral small} for the summand of $h(*;r)$ with the multiple $X^{-ir}$ when  $x_{\pm}\gg X^{-1+\epsilon}.$
\par
Consider the case $X^{-1+\epsilon}\ll x_{\pm}\ll X^{1-\epsilon}$ and the summand of $h(*;r)$ with the multiple $X^{ir}$. In that case we start by evaluating the integral over $r$ in \eqref{I integral estimate2}. The strategy is as follows.  First, we expand the function under the integration sign in Taylor series at the point $r=K$. Second, we make the change of variable $r=K+Gu$ and apply \eqref{exp integral2}.  By taking sufficiently many terms in the Taylor expansion, we obtain a  negligibly small error.  All such terms can be treated similarly. Thus we consider further only the main term as it gives the largest contribution. By the Stirling formula \eqref{Stirling2}
\begin{multline}\label{Taylor1}
\sinh(\pi r)\Gamma(s+ir)\Gamma(s-ir)=\pi\exp(\pi(|r|-|r+t|/2-|r-t|/2))\\\times
\exp(i( (r+t)\log|r+t|-(r-t)\log|r-t|-2t ) )(1+\ldots).
\end{multline}
Having in mind that $r\asymp K\asymp T$ and $|t|\ll T^{\epsilon}$, we obtain
\begin{multline}\label{Taylor2}
(r+t)\log|r+t|-(r-t)\log|r-t|-2t=2t\log r\\
+r\log(1+t/r)-r\log(1-t/r)-2t+t\log(1-t^2/r^2)=\\
2t\log K+2t\log(1+(r-K)/K)-\frac{t^3}{3r^2}\ldots
\end{multline}
Since we can restrict the $r$ integral to the region $|r-K|\ll GT^{\epsilon}$, it is possible to expand $\log(1+(r-K)/K)$ in the Taylor series with a negligibly small error. Substituting \eqref{Taylor2} to \eqref{Taylor1}, we obtain
\begin{equation}\label{Taylor3}
\sinh(\pi r)\Gamma(s+ir)\Gamma(s-ir)=\pi\exp(2it\log K)(1+\ldots).
\end{equation}
It follows from \eqref{Taylor3} and \eqref{hN def}  that the main term of the the integral over $r$ in \eqref{I integral estimate2} is equal to
\begin{equation}\label{I integral estimate3}
\pi K^{2it}\int_{-\infty}^{\infty}r^2\exp\left(-\frac{(r-K)^2}{G^2}\right)\left(\frac{Xy(1-y)}{y+x_{\pm}}\right)^{ir}dr.
\end{equation}
Making the change of variable $r=K+Gu$ and applying \eqref{exp integral1}, we obtain that the main term is
\begin{equation}\label{I integral estimate4}
\pi^{3/2}K^{2it}\left(\frac{Xy(1-y)}{y+x_{\pm}}\right)^{iK}GK^2
\exp\left(-\frac{G^2}{4}\log^2\left(\frac{Xy(1-y)}{y+x_{\pm}}\right)\right).
\end{equation}
This expression is exponentially small unless
\begin{equation}\label{short interval1}
\left|\frac{Xy(1-y)}{y+x_{\pm}}-1\right|\ll\frac{\log X}{G}.
\end{equation}
The last condition is satisfied if either
\begin{equation}\label{short interval2}
\left|y-a\right|\ll a\frac{\log X}{G}\quad\hbox{or}\quad
\left|(1-y)-b\right|\ll b\frac{\log X}{G},
\end{equation}
where $a=x_{\pm}/X$ and $b=(1+x_{\pm})/X.$  

Let $\chi_a(y)$ and $\chi_b(y)$ be smooth characteristic functions of the intervals \eqref{short interval2} such that
\begin{equation}\label{j norms of chi a,b}
\chi_a^{(j)}\ll \left(a\frac{\log X}{G}\right)^{-j},\qquad
\chi_b^{(j)}\ll \left(b\frac{\log X}{G}\right)^{-j}.
\end{equation}
Thus we are left to estimate \eqref{I integral estimate2} with the additional multiples $\chi_{a}(y)$ and $\chi_{a}(y)$. To this end, we change the order of integration in \eqref{I integral estimate2}, making the integral over $y$ to be the inner one. First, consider the $"a"$ case
\begin{multline}\label{I integral estimate5}
\frac{x_{\pm}^{1-s}}{G}\int_T^{2T}
\int_{-\infty}^{\infty}\frac{r^2h(*;r)}{\sinh(\pi r)\Gamma(s+ir)\Gamma(s-ir)}\\\times
\int_0^1\frac{y^{-s}(1-y)^{s-1}}{(y+x_{\pm})^{1-s}}
\left(\frac{y(1-y)}{y+x_{\pm}}\right)^{ir}\chi_{a,b}(y)dydrdK.
\end{multline}
We integrate $j$ times by parts using the factor $y^{-s+ir}$
\begin{multline}\label{I integral estimate6}
\int_0^1\frac{(1-y)^{s-1+ir}}{(y+x_{\pm})^{1-s+ir}}\chi_{a}(y)y^{-s+ir}dy\ll\\
\int_0^1
\left|\frac{d^{j}}{dy^j}\left(\frac{(1-y)^{s-1+ir}}{(y+x_{\pm})^{1-s+ir}}\chi_{a}(y)\right)\right|
\frac{y^{j-1/2}}{(|t|+|r|)^j}dy\ll\\
\int_0^1\left(y^j+\frac{y^j}{(y+x_{pm})^j}+\frac{y^j}{(|t|+|r|)^j}\left|\chi_a^{(j)}\right|\right)\chi_{a}(y)dy.
\end{multline}
Applying \eqref{short interval2} and \eqref{j norms of chi a,b}, we obtain
\begin{multline}\label{I integral estimate7}
\int_0^1\frac{(1-y)^{s-1+ir}}{(y+x_{\pm})^{1-s+ir}}\chi_{a}(y)y^{-s+ir}dy\ll
\left(\frac{x_{\pm}}{X}\right)^j+\left(\frac{1}{X}\right)^j+\left(\frac{G}{(|t|+|r|)}\right)^j\ll X^{-A}.
\end{multline}
Indeed, $x_{\pm}\ll X^{1-\epsilon}$ and the integral over $r$ can be restricted to the interval $|r-K|\ll G\log^2X$ so that
\begin{equation}
G/|r|\ll G/K\ll G/T\ll X^{-\epsilon}
\end{equation}
since $T\gg X^{\epsilon}$. Substituting \eqref{I integral estimate7} to \eqref{I integral estimate5} and estimating trivially the remaining  integrals, we prove the lemma.
\par
The $"b"$ case can be treated similarly. The only difference is that  it is required to integrate $j$ times by parts using the multiple $(1-y)^{s-1+ir}.$
\end{proof}

\begin{lem}\label{lemma I main estimate}
Suppose that $X\gg1$ and $X^{\epsilon}\le T\le X^{1/2+\epsilon}.$ Let $s=1/2+it, 1\ll|t|\ll T^{\epsilon}$ and $n\neq0,\pm1,\pm2.$ Let $A$ be an arbitrary positive constant. Let
\begin{equation}\label{main conditions for I}
X^{1-\epsilon}\ll x_{\pm}\ll X^{1+\epsilon},
\end{equation}
 where $x_{\pm}$ is defined by \eqref{x def}.
The following holds
\begin{equation}\label{I integral main estimate1}
\frac{1}{G}\int_T^{2T}I(n,\tau,s)dK\ll \sum_{\pm}\frac{T^{5/2}}{(x_{\pm}(n,\tau)\cos^2\tau)^{1/2}}
\end{equation}
if $|X-4x_{\pm}|\ll XT^{-1+\epsilon}$ ,
\begin{equation}\label{I integral main estimate2}
\frac{1}{G}\int_T^{2T}I(n,\tau,s)dK\ll \sum_{\pm}\frac{T^{3/2}}{(x_{\pm}(n,\tau)\cos^2\tau)^{1/2}}
\frac{X}{|4x_{\pm}-X|}
\end{equation}
if $XT^{-1+\epsilon}\ll|X-4x_{\pm}|\ll XG^{-1}\log T$,
and
\begin{equation}\label{I integral main estimate3}
\frac{1}{G}\int_T^{2T}I(n,\tau,s)dK\ll  ((1+|n|)X)^{-A}
\end{equation}
if $|X-4x_{\pm}|\gg (X\log T)/G$.
\end{lem}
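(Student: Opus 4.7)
The plan is to exchange the order of integration in $\frac{1}{G}\int_T^{2T}I(n,\tau,s)\,dK$, using the expression for $I(n,\tau,s)$ from Lemma~\ref{lem i2}. Since $h(K,N,T,X;r)$ is a sum of two Gaussians centred at $r=\pm K$, performing the $K$-integration first converts them into the smooth weight $\omega_T(\pm r)$ of \eqref{omega def}; the negative branch is handled by the substitution $r\mapsto -r$ using the evenness of $q_N$, $r^2$, and $\cosh(\pi r)$. What remains is, for each sign, the oscillatory integral
\begin{equation*}
J_\pm := \frac{\sqrt{\pi}}{8(x_\pm\cos^2\tau)^{1-s}}\int_{-\infty}^{\infty} r^2 \omega_T(r) q_N(r)\cosh(\pi r)\left(\frac{X}{x_\pm}\right)^{\!ir}\frac{\Gamma(1-s+ir)\Gamma(-2ir)}{\Gamma(s-ir)}F_\pm(r)\,dr,
\end{equation*}
where $F_\pm(r)=F(1-s+ir,1-s+ir,1+2ir;-1/x_\pm)$, and the task reduces to bounding $J_\pm$ by stationary-phase / integration-by-parts techniques.

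The next step is to extract amplitude and phase by Stirling's formula \eqref{Stirling2}. For $r\asymp T\gg|t|$ and $s=1/2+it$, the complex arguments $1-s+ir$ and $s-ir$ are conjugates of each other, so the ratio $\Gamma(1-s+ir)/\Gamma(s-ir)$ has modulus $1$; combined with $\cosh(\pi r)\asymp e^{\pi r}/2$ and $|\Gamma(-2ir)|\asymp r^{-1/2}e^{-\pi r}$, the exponential factors cancel and the amplitude of $J_\pm$ is of order $T^{3/2}$. The phases accumulated from $\Gamma(-2ir)$ and from $\Gamma(1-s+ir)/\Gamma(s-ir)$, combined with the twist $(X/x_\pm)^{ir}$, work out to
\begin{equation*}
\Phi(r) = r\log\!\left(\tfrac{X}{4x_\pm}\right) - 2t\log r + O(1),\qquad \Phi'(r)=\log\!\left(\tfrac{X}{4x_\pm}\right)-\tfrac{2t}{r},\qquad \Phi''(r)=\tfrac{2t}{r^2}.
\end{equation*}
The hypothesis $X^{1-\epsilon}\ll x_\pm\ll X^{1+\epsilon}$ makes $|1/x_\pm|\ll X^{-1+\epsilon}$, so the Taylor expansion $F(a,a,c;z)=1+a^{2}z/c+\cdots$ with $a,c\asymp r$ shows that $F_\pm(r)$ and its $r$-derivatives are slowly varying and contribute only lower-order corrections.

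Finally, I would split into three regimes by comparing $|\Phi'(r)|\asymp|X-4x_\pm|/X$ with the Gaussian scale $1/G$. When $|X-4x_\pm|\ll XT^{-1+\epsilon}$, the phase varies by at most $T^\epsilon$ across $(T,2T)$, so bounding amplitude times length trivially yields $T^{5/2}(x_\pm\cos^2\tau)^{-1/2}$, namely \eqref{I integral main estimate1}. When $XT^{-1+\epsilon}\ll|X-4x_\pm|\ll X\log T/G$, a single integration by parts saves a factor $1/|\Phi'|=X/|X-4x_\pm|$ and produces \eqref{I integral main estimate2}; here the dominant contribution arises from $\omega_T'$, which is supported in two edge windows of width $\asymp G$ on which $|\omega_T'|\asymp 1/G$. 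When $|X-4x_\pm|\gg X\log T/G$, one has $G|\Phi'|\gg\log T$, and iterating integration by parts a sufficient (but bounded) number of times yields the arbitrary polynomial decay claimed in \eqref{I integral main estimate3}. The main technical obstacle is the book-keeping in these integrations by parts: one must check that derivatives of the remaining amplitude (from $\omega_T$, $q_N$, the Stirling remainder, and $F_\pm$) together with the $\Phi''\asymp T^{-2}$ term do not erode the saving $1/|\Phi'|$ per step. The key observation is the balance between the edge width $G$ and the edge derivative $|\omega_T'|\asymp 1/G$, which, combined with the smoothness of the remaining factors on scale $T$, preserves the bound $T^{3/2}/|\Phi'|$ after each integration by parts.
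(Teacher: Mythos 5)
Your reduction to an oscillatory $r$-integral is sound, and for the first two regimes your argument is essentially the paper's seen through Fubini: the paper keeps the $K$-integration outside, evaluates the $r$-integral by completing the square in the Gaussian (the substitution $r=K+Gu$ together with \eqref{exp integral1}--\eqref{exp integral2}), and then applies the first derivative test to $\int_T^{2T}K^{3/2-2it}(X/(4x_{\pm}))^{iK}dK$; your single integration by parts against $\omega_T$ produces the same saving $X/|X-4x_{\pm}|$ for \eqref{I integral main estimate2}, and your Taylor expansion of the hypergeometric factor (valid since $r/x_{\pm}\ll T/X\ll X^{-1/2+\epsilon}$) is equivalent to the paper's Mellin--Barnes contour shift in \eqref{2F1 MellinBarnes1}.

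The gap is in the third regime. There you only have $G|\Phi'|\gg\log T$, and each integration by parts against $\omega_T$ pays for its saving $1/|\Phi'|$ with a derivative of the weight: the dominant term after $j$ steps is $\int |\omega_T^{(j)}|\,r^{3/2}|\Phi'|^{-j}dr\ll T^{3/2}G\,(G|\Phi'|)^{-j}$, since $\omega_T^{(j)}\ll G^{-j}$ on edge windows of width $\asymp G$. Near the lower end of the range $|X-4x_{\pm}|\asymp X\log T/G$ this is only $T^{3/2}G(\log T)^{-j}$, so a \emph{bounded} number of integrations by parts saves a bounded power of $\log T$ and can never reach $((1+|n|)X)^{-A}$; reaching it by iteration would require $j\gg\log X/\log\log T$ steps, with Hermite-type growth of the constants to control. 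The threshold $|X-4x_{\pm}|\gg X\log T/G$ is calibrated not to integration by parts but to the exact Gaussian decay: completing the square as in \eqref{I integral estimate10} produces the factor $\exp\bigl(-\tfrac{G^2}{4}\log^2\tfrac{X}{4x_{\pm}}\bigr)\le\exp(-c\log^2T)$, which is $\ll T^{-B}\ll X^{-\epsilon B}$ for every fixed $B$, and this is what supplies \eqref{I integral main estimate3}. Equivalently, in your set-up you should use that $\omega_T$ is the Gaussian mollification \eqref{omega def} of $\chi_{[T,2T]}$, so its Fourier transform at frequency $\xi=\log(X/(4x_{\pm}))$ carries the factor $e^{-G^2\xi^2/4}$; the polynomial amplitude and the Stirling and hypergeometric corrections are then absorbed by a finite Taylor expansion around $r=K$ and \eqref{exp integral2}. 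With that replacement the rest of your argument goes through.
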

\begin{proof}
The condition $n\neq0,\pm1,\pm2$ guarantees that (see \eqref{small n})
\begin{equation*}
4x_{\pm}\cos^2\tau=(|n|c_{\pm})^2\neq0.
\end{equation*}
The key idea is to apply the representation \eqref{I integral hypergeom2} for $I(n,\tau,s)$ and to prove an asymptotic formula for the hypergeometric function from \eqref{I integral hypergeom2}. 
With this goal, we use a technique based on the Mellin-Barnes integral representation for the hypergeometric function, which is possible since $r/x_{\pm}\approx T/X\ll X^{-1/2}$. 
\par
Writing the Mellin-Barnes integral \cite[15.6.1]{HMF}, we obtain
\begin{multline}
F\left(1-s+ir,1-s+ir,1+2ir;\frac{-1}{x_{\pm}}\right)=\\
\frac{\Gamma(1+2ir)}{\Gamma^2(1-s+ir)}\frac{1}{2\pi i}\int_{(-1/4)}
\frac{\Gamma^2(1-s+ir+z)\Gamma(-z)}{\Gamma(1+2ir+z)}x_{\pm}^{-z}dz.
\end{multline}
Moving the line of integration to the right on the line $\Re(z)=a+1/2$ with $a$ being a positive integer, we have
\begin{multline}\label{2F1 MellinBarnes1}
F\left(1-s+ir,1-s+ir,1+2ir;\frac{-1}{x_{\pm}}\right)=\\
\frac{\Gamma(1+2ir)}{\Gamma^2(1-s+ir)}\sum_{j=0}^{a}\frac{(-1)^j}{j!}
\frac{\Gamma^2(1-s+ir+j)}{\Gamma(1+2ir+j)}x_{\pm}^{-j}+\\
\frac{\Gamma(1+2ir)}{\Gamma^2(1-s+ir)}\frac{1}{2\pi i}\int_{-\infty}^{\infty}
\frac{\Gamma^2(a+1+i(r-t+u))\Gamma(-a-1/2-iu)}{\Gamma(a+3/2+i(2r+u))x_{\pm}^{+a+1/2+iu}}du.
\end{multline}
The contribution of all residue terms from \eqref{2F1 MellinBarnes1} to \eqref{I integral hypergeom2} can be treated similarly, and since $r/x_{\pm}\ll X^{-1/2}$ the main contribution comes from the summand with $j=0.$ To estimate the integral in
\eqref{2F1 MellinBarnes1}  we use the Stirling formula \eqref{Stirling2}. It shows that the interval $-r+t+1<u<-1$ gives the main contribution to the integral and is bounded by
\begin{multline}\label{2F1 MellinBarnes2}
\frac{|\Gamma(1+2ir)|}{|\Gamma^2(1-s+ir)|x_{\pm}^{a+1/2}}\exp(\pi t)\int_{-r+t+1}^{-1}
\frac{|r-t+u|^{2a+1}|u|^{-a-1}}{|2r+u|^{a+1}}du\ll\\
\frac{|\Gamma(1+2ir)|}{|\Gamma^2(1-s+ir)|x_{\pm}^{a+1/2}}\exp(\pi t)r^a\ll
\frac{r^{a+1/2}}{x_{\pm}^{a+1/2}}.
\end{multline}
 Choosing $a$ to be a sufficiently large integer, we obtain that this term is negligible  since $r/x_{\pm}\ll X^{-1/2}$. Consequently, in order to estimate $I(n,\tau,s)$ it is sufficient to consider \eqref{I integral hypergeom2} with  the hypergeometric function being replaced by one. Hence we need to estimate
\begin{multline}\label{I integral estimate8}
\sum_{\pm}\frac{1}{8(x_{\pm}(n,\tau)\cos^2\tau)^{1-s}}
\int_{-\infty}^{\infty}r^2h(*;r)\cosh(\pi r)x_{\pm}^{-ir}
\frac{\Gamma(1-s+ir)\Gamma(-2ir)}{\Gamma(s-ir)}dr.
\end{multline}
Using the Stirling formula \eqref{Stirling2} and arguing in the same way as in \eqref{Taylor1}-\eqref{Taylor3}, we have
\begin{equation}\label{Taylor4}
\cosh(\pi r)\frac{\Gamma(1-s+ir)\Gamma(-2ir)}{\Gamma(s-ir)}=
\frac{\pi^{1/2}\exp(\pi i/4)}{2r^{1/2}}2^{-2ir}K^{-2it}(1+\ldots).
\end{equation}
Substituting \eqref{Taylor4} to \eqref{I integral estimate8}, we obtain the main term
\begin{equation}\label{I integral estimate9}
\sum_{\pm}\frac{\pi^{1/2}\exp(\pi i/4)K^{-2it}}{16(x_{\pm}(n,\tau)\cos^2\tau)^{1-s}}
\int_{-\infty}^{\infty}r^{3/2}h(*;r)(4x_{\pm})^{-ir}dr.
\end{equation}
Using \eqref{hN def}, making the change of variable $r=K+Gu$  and applying \eqref{exp integral1}-\eqref{exp integral2}, the main term can be transformed into
\begin{equation}\label{I integral estimate10}
\sum_{\pm}\frac{\pi^{1/2}\exp(\pi i/4)K^{-2it}}{16(x_{\pm}(n,\tau)\cos^2\tau)^{1-s}}
K^{3/2}G \left(\frac{X}{4x_{\pm}}\right)^{iK}
\pi^{1/2}\exp\left(-\frac{G^2}{4}\log^2\frac{X}{4x_{\pm}}\right).
\end{equation}
It follows from \eqref{I integral estimate10} that
\begin{multline}\label{I integral main estimate4}
\frac{1}{G}\int_T^{2T}I(n,\tau,s)dK\ll
\sum_{\pm}\exp\left(-\frac{G^2}{4}\log^2\frac{X}{4x_{\pm}}\right)
\frac{1}{(x_{\pm}(n,\tau)\cos^2\tau)^{1/2}}\\ \times
\left|
\int_T^{2T}K^{3/2-2it}\left(\frac{X}{4x_{\pm}}\right)^{iK}dK
\right|.
\end{multline}
To prove \eqref{I integral main estimate3} for $|\log\frac{X}{4x_{\pm}}|\gg (\log T)/G,$ it is enough to estimate the integral trivially by $T^{5/2}$ since the exponential factor is negligibly small in this case.
\par
Now let us consider the case $|\log\frac{X}{4x_{\pm}}|\ll (\log T)/G.$ Once again we can estimate the integral in \eqref{I integral main estimate4} trivially by $T^{5/2}$ and obtain \eqref{I integral main estimate1}. To prove \eqref{I integral main estimate2} we  apply the first derivative test for the integral in \eqref{I integral main estimate4}. Let $L=T\log(X/(4x_{\pm}))$. We have
\begin{multline}\label{I integral main estimate5}
\left|
\int_T^{2T}K^{3/2-2it}\left(\frac{X}{4x_{\pm}}\right)^{iK}dK
\right|\ll
T^{5/2}\left|
\int_1^{2}y^{3/2-2it}\left(\frac{X}{4x_{\pm}}\right)^{iTy}dy
\right| \\ \ll
T^{5/2}\left|
\int_1^{2}y^{3/2}\exp(i(-2t\log y+Ly) )dy
\right|.
\end{multline}
The conditions in \eqref{I integral main estimate2}  ensure that $|L|\gg T^{\epsilon}$, and therefore, $|L|\gg |t|.$ Thus applying the first derivative test, we obtain
\begin{multline}\label{I integral main estimate6}
\left|
\int_T^{2T}K^{3/2-2it}\left(\frac{X}{4x_{\pm}}\right)^{iK}dK
\right|\ll
\frac{T^{5/2}}{|L|}=
\frac{T^{3/2}}{|\log(X/(4x_{\pm}))|}\ll
\frac{T^{3/2}X}{|X-4x_{\pm}|}.
\end{multline}
Substituting \eqref{I integral main estimate6} to \eqref{I integral main estimate4}, we prove \eqref{I integral main estimate2}.
\end{proof}

The main contribution to the right-hand side of \eqref{1moment smoth1} comes from the sum over large $|n|$ in  $\Sigma(s)$  and is estimated in the following lemma.

\begin{lem}
Let $X\gg1$ and $X^{\epsilon}\le T\le X^{1/2+\epsilon}.$ Then for $s=1/2+it$, $|t|\ll T^{\epsilon}$ we have
\begin{multline}\label{sum over n estimate}
\frac{1}{G\pi^{1/2}}\int_T^{2T}
\frac{8(2\pi)^{2s-1}}{\pi^2}
\int_0^{\pi/2}
\frac{\zeta_{\kk}(s)}{\zeta_{\kk}(2s)}
\sum_{n\neq0,\pm1,\pm2}\mathscr{L}_{\kk}(s;\bar{n}^2-4)
I(n,\tau,s)d\tau dK\\ \ll T^{3/2}X^{1/2+\theta}(TX)^{\epsilon}.
\end{multline}
\end{lem}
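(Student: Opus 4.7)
The strategy is to combine the two estimates for $\frac{1}{G}\int_T^{2T}I(n,\tau,s)dK$ (Lemmas \ref{lemma on small I} and \ref{lemma I main estimate}) with the subconvex bound \eqref{Lbeaut subconvex} for $\mathscr{L}_{\kk}(s;\bar{n}^2-4)$, and then to execute a careful lattice-point count over the Gaussian integers $n$. The key geometric observation is that the quantity appearing in the bounds has the clean form
\begin{equation*}
(|n|c_{\pm})^2 \;=\; |n|^2+4\sin^2\tau\pm4|n|\sin\tau\cos\vartheta \;=\; |n\pm 2\sin\tau|^2,
\end{equation*}
so that $4x_{\pm}\cos^2\tau=|n\pm 2\sin\tau|^2$. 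Lemma \ref{lemma on small I} then reduces the problem to the range $X^{1-\epsilon}\ll x_{\pm}\ll X^{1+\epsilon}$, which forces $|n\pm 2\sin\tau|\asymp X^{1/2}\cos\tau$ up to factors of $X^{O(\epsilon)}$; in particular, $\cos\tau\gg X^{-1/2-\epsilon}$ and $|n|\asymp X^{1/2}\cos\tau$ in the relevant range. Since $|\bar{n}^2-4|\asymp|n|^2$, the subconvex bound yields $\mathscr{L}_{\kk}(s;\bar{n}^2-4)\ll (X\cos^2\tau)^{\theta+\epsilon}$ for $|t|\ll T^{\epsilon}$, while $\zeta_{\kk}(s)/\zeta_{\kk}(2s)=O(1)$ uniformly on the relevant line.

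\textbf{Case 1.} When $|X-4x_{\pm}|\ll XT^{-1+\epsilon}$, Lemma \ref{lemma I main estimate} gives
$$\frac{1}{G}\int_T^{2T}I(n,\tau,s)dK\ll T^{5/2}/(x_{\pm}\cos^2\tau)^{1/2}\ll T^{5/2}/(X^{1/2}\cos\tau).$$
The admissible Gaussian integers $n$ are those with $||n\pm 2\sin\tau|-X^{1/2}\cos\tau|\ll X^{1/2}\cos\tau\cdot T^{-1+\epsilon}$, and the standard area count for Gaussian integers in such an annulus gives $\ll X\cos^2\tau\cdot T^{-1+\epsilon}$ of them. Multiplying by the $\mathscr{L}_{\kk}$-bound and the individual $I$-bound, the total contribution from case 1 is
$$\ll X\cos^2\tau\cdot T^{-1+\epsilon}\cdot (X\cos^2\tau)^{\theta+\epsilon}\cdot \frac{T^{5/2}}{X^{1/2}\cos\tau}\;=\;T^{3/2+\epsilon}X^{1/2+\theta+\epsilon}\cos^{1+2\theta}\tau,$$
which integrates to the desired bound.

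\textbf{Case 2.} Dyadically decompose $|X-4x_{\pm}|\asymp\Delta$ with $XT^{-1+\epsilon}\ll\Delta\ll XG^{-1}\log T$. The corresponding admissible $n$ lie in a thinner annulus of radial width $\asymp \Delta\cos\tau/X^{1/2}$, hence number $\asymp \Delta\cos^2\tau$. Each contributes
$$\frac{T^{3/2}}{(x_{\pm}\cos^2\tau)^{1/2}}\cdot\frac{X}{|4x_{\pm}-X|}\cdot\mathscr{L}_{\kk}\;\ll\;\frac{T^{3/2}X^{1/2+\theta+\epsilon}\cos^{2\theta-1}\tau}{\Delta},$$
so the sum over admissible $n$ at a fixed dyadic level is again $\ll T^{3/2}X^{1/2+\theta+\epsilon}\cos^{1+2\theta}\tau$. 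Summing the $O(\log X)$ dyadic levels and integrating over $\tau\in[0,\pi/2]$, where $\int_0^{\pi/2}\cos^{1+2\theta}\tau\,d\tau<\infty$, yields the stated bound $T^{3/2}X^{1/2+\theta}(TX)^{\epsilon}$. The remaining range $|X-4x_{\pm}|\gg XG^{-1}\log T$ is negligible by \eqref{I integral main estimate3}.

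\textbf{Main obstacle.} The technical heart of the argument is the lattice-point bookkeeping: one must count Gaussian integers $n$ with $|n\pm 2\sin\tau|$ in a prescribed annulus whose radius and width both depend on $\tau$, ensure that the resulting bound is uniform as $\tau$ approaches $\pi/2$ (where $\cos\tau\to 0$ collapses the admissible range, eventually excluding all $|n|\ge 3$), and verify that the $\cos^{1+2\theta}\tau$ weight is integrable on $[0,\pi/2]$. The two-sided $\pm$ shift by $2\sin\tau$ must be handled separately to avoid double counting, but the symmetry simplifies the bounds to those displayed above.
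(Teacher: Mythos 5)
Your proposal is correct and follows essentially the same route as the paper: reduce via the subconvex bound \eqref{Lbeaut subconvex} to estimating $\sum_n |n|^{2\theta+\epsilon}\bigl|\frac{1}{G}\int_T^{2T}I(n,\tau,s)dK\bigr|$, discard the off-ranges by Lemma \ref{lemma on small I}, and then count Gaussian integers in the annuli dictated by Lemma \ref{lemma I main estimate} (the paper phrases this as approximating the $n$-sum by a double integral, which is the same area count you perform). Your explicit identity $(|n|c_{\pm})^2=|n\pm 2\sin\tau|^2$ and your written-out dyadic treatment of the second range are slightly more detailed than the paper, which dispatches that case with "can be treated analogously."
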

\begin{proof}
Using standard estimates on $\zeta_{\kk}(s)$ and \eqref{subconvexity}, we obtain that it is required to prove the following
\begin{equation}\label{sum over n estimate2}
\int_0^{\pi/2}\sum_{n\neq0,\pm1,\pm2}|n|^{2\theta+\epsilon}
\left|\frac{1}{G}\int_T^{2T}I(n,\tau,s)dK\right|d\tau \ll T^{3/2}X^{1/2+\theta}(TX)^{\epsilon}.
\end{equation}
The result of Lemma \ref{lemma on small I} shows that everything is small unless $X^{1-\epsilon}\ll x_{\pm}\ll X^{1+\epsilon}.$ In the later case, we apply Lemma \ref{lemma I main estimate}. For $|X-4x_{\pm}|\ll XT^{-1+\epsilon}$ we
apply \eqref{I integral main estimate1} and obtain
\begin{equation}\label{sum over n estimate3}
\sum_{\pm}\int_0^{\pi/2}
\sum_{\substack{n\neq0,\pm1,\pm2\\ |X-4x_{\pm}|\ll XT^{-1+\epsilon}}}
\frac{T^{5/2}|n|^{2\theta+\epsilon}}{(x_{\pm}(n,\tau)\cos^2\tau)^{1/2}}d\tau.
\end{equation}
Using \eqref{x def} and approximating the sum over $n$ by a double integral (note that since $T\ll X^{1/2+\epsilon}$ the error term caused by such approximation is less than the main term), we have
\begin{equation}\label{sum over n estimate4}
\sum_{\pm}\int_0^{\pi/2}
\sum_{\substack{n\neq0,\pm1,\pm2\\ |X\cos^2\tau-|n|^2c^2_{\pm}|\ll XT^{-1+\epsilon}\cos^2\tau}}
\frac{T^{5/2}|n|^{2\theta+\epsilon}}{|n|c_{\pm}}d\tau\ll
T^{3/2}X^{1/2+\theta}(TX)^{\epsilon}.
\end{equation}
The case when $XT^{-1+\epsilon}\ll|X-4x_{\pm}|\ll XG^{-1}\log T$  in \eqref{sum over n estimate2} can be treated analogously.

\end{proof}
Finally, we are left to estimate the remaining summands of $\Sigma(s)$ in \eqref{Sigma1} as well as the other summands from the right-hand side of \eqref{firstmoment=MT+Sigma-ContSpectr2}. The main term
\begin{equation*}
\frac{4\zeta_{\kk}(s)}{\pi^2}\int_{-\infty}^{\infty}r^2h(r)dr
\end{equation*}
of \eqref{firstmoment=MT+Sigma-ContSpectr2}  is negligible due to \eqref{hN def} and \eqref{exp integral1}-\eqref{exp integral2}. The term containing $h(*;i(s-1))$ is also negligible. Using the subconvexity bound $$\zeta_{\kk}(1/2+ir)\ll(1+|r|)^{1/6+\epsilon},$$ which is due to Kaufman \cite{Kauf},
and estimating the remaining term trivially, we obtain
\begin{equation*}
-8\pi\frac{\zeta^2_{\kk}(s)}{\zeta_{\kk}(2s)}\int_{-\infty}^{\infty}
\frac{\zeta_{\kk}(s+ir)\zeta_{\kk}(s-ir)}{\zeta_{\kk}(1+ir)\zeta_{\kk}(1-ir)}h(r)dr\ll K^{1/3+\epsilon}G.
\end{equation*}
Substitution of this estimate to \eqref{1moment smoth1} gives the error term of size $T^{4/3+\epsilon}$, which is smaller than \eqref{sum over n estimate}.
\par
The next lemma provides estimates for the remaining summands of $\Sigma(s)$.
\begin{lem}
Let $X\gg1$ and $X^{\epsilon}\le T\le X^{1/2+\epsilon}.$ Then for $s=1/2+it$, $|t|\ll T^{\epsilon}$ we have
\begin{multline}\label{sum over exceptional n estimate}
\frac{1}{G\pi^{1/2}}\int_T^{2T}
\frac{8(2\pi)^{2s-1}}{\pi^2}
\int_0^{\pi/2}
\frac{\zeta_{\kk}(s)}{\zeta_{\kk}(2s)}
\sum_{n=0,\pm1,\pm2}\mathscr{L}_{\kk}(s;\bar{n}^2-4)
I(n,\tau,s)d\tau dK \\ \ll \frac{T^{3/2}X^{\epsilon}}{X^{1/2}}.
\end{multline}
\end{lem}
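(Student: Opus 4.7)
The plan is to estimate each of the five contributions $n \in \{0, \pm 1, \pm 2\}$ individually and show that each is bounded by $O(T^{3/2} X^{-1/2+\epsilon})$. A preliminary observation: for these values $\bar{n}^2 - 4 \in \{-4, -3, 0\}$, so by \eqref{ldecomp}--\eqref{L beautiful 0} the factor $\mathscr{L}_{\kk}(1/2 + it; \bar{n}^2 - 4)$ reduces to a fixed quadratic Dirichlet $L$-value (for $n = 0, \pm 1$) or to $4\zeta_{\kk}(2it)$ (for $n = \pm 2$), both of polynomial growth in $|t|$ and hence $O(T^{\epsilon})$ under $|t| \ll T^{\epsilon}$. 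The prefactor $8(2\pi)^{2s-1}\zeta_{\kk}(s)/(\pi^2 \zeta_{\kk}(2s))$ is likewise $O(T^{\epsilon})$. It therefore suffices to bound $(1/G)\int_T^{2T}\int_0^{\pi/2} I(n,\tau,s)\,d\tau\,dK$ for each fixed exceptional $n$.

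For $n \in \{\pm 1, \pm 2\}$, the exclusions in Lemmas \ref{lemma on small I} and \ref{lemma I main estimate} are used only to ensure $(|n|c_{\pm})^2 \ne 0$. For the choice of sign at which $(|n|c_{\pm})^2$ is bounded away from zero on $[0, \pi/2]$ --- namely the ``$+$'' sign for $n = 1, 2$ and the ``$-$'' sign for $n = -1, -2$ --- those lemmas' proofs apply verbatim. The resonance $|X - 4x_{\pm}| \ll XT^{-1+\epsilon}$ occurs near $\tau = \pi/2$ with $\cos\tau \sim (|n|c_{\pm})/X^{1/2}$; since $d(4x_{\pm})/d\tau \sim X^{3/2}/(|n|c_{\pm})$ in this region, its $\tau$-measure is $O(T^{-1} X^{-1/2})$. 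Inserting \eqref{I integral main estimate1} in the resonance and \eqref{I integral main estimate2} in the transition region, then \eqref{I integral main estimate3} (or the analogue of \eqref{I integral small}) elsewhere, yields the desired $O(T^{3/2} X^{-1/2 + \epsilon})$. For the opposite sign, $(|n|c_{\pm})^2$ vanishes at $\tau_0 = \pi/6$ (if $|n|=1$) or $\tau_0 = \pi/2$ (if $|n|=2$); away from $\tau_0$ the same argument works, and for $|\tau - \tau_0| \le X^{-\epsilon}$ we switch to the representation \eqref{I integral hypergeom1}, in which the only $r$-oscillation is $X^{\pm ir}$ from $h(r)$, with no compensating $x_{\pm}^{-ir}$. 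Completing the square against the Gaussian $e^{-(r-K)^2/G^2}$ via \eqref{exp integral1} then produces the super-exponentially small factor $\exp(-G^2 \log^2 X/4)$; since $G \gg T^{\epsilon}$, this decay dominates the potential divergences from $1/\cos^{2-2s}\tau$, so the contribution is $O(X^{-A})$.

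The case $n = 0$ is handled the same way, using \eqref{I0 integral def}: the only $r$-oscillation inside $h^*(0, \tau; 2-2s)$ is $X^{\pm ir}$, so the same Gaussian completion of the square against $e^{-(r-K)^2/G^2}$ produces $\exp(-G^2 \log^2 X/4)$ and the contribution is negligible. Summing the five cases yields \eqref{sum over exceptional n estimate}. The main obstacle lies in the neighbourhoods where $(|n|c_{\pm})^2$ vanishes (and for $|n| = 2$, also $\cos\tau \to 0$): one must verify that the integrand of the $r$-integration --- after extracting Stirling's approximation of the bounded combination $\cosh(\pi r)\Gamma(1-s-ir)\Gamma(1-s+ir)$ and controlling $F(1-s+ir, 1-s-ir, 1; -x_{\pm})$ at small argument $x_{\pm}$ and large imaginary parameters $r \asymp T$ --- is slowly varying on the scale $G$; only then does the completion-of-the-square argument actually deliver the claimed super-exponential saving and dominate the polynomial-in-$X$ algebraic factors.
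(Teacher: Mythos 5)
Your treatment of $n=\pm1,\pm2$ at the sign where $(|n|c_{\pm})^2$ is bounded below is sound and is essentially the paper's argument: Lemmas \ref{lemma on small I} and \ref{lemma I main estimate} only use $n\neq0,\pm1,\pm2$ to ensure $(|n|c_{\pm})^2\neq0$, the resonance sits at $\cos\tau\sim|n|c_{\pm}/X^{1/2}$ with $\tau$-measure $O(T^{-1}X^{-1/2})$, and \eqref{I integral main estimate1}--\eqref{I integral main estimate2} give $O(T^{3/2}X^{-1/2+\epsilon})$. The genuine gap is your $n=0$ case. The assertion that ``the only $r$-oscillation inside $h^{*}(0,\tau;2-2s)$ is $X^{\pm ir}$'' is false: by \eqref{h star def2}, $h^{*}(0,\tau;2-2s)$ contains $F(1-s+ir,1-s-ir,1;-\tan^2\tau)$, and for $\tan\tau$ large this hypergeometric function oscillates in $r$ like $(\tan\tau)^{\mp 2ir}$ (this is exactly the content of the transformation to \eqref{I0 integral def3}). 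The total phase is then $r\log\bigl(X/(4\tan^2\tau)\bigr)$, which vanishes when $4\tan^2\tau\approx X$, so the Gaussian completion of the square gives $\exp\bigl(-\tfrac{G^2}{4}\log^2(X/(4\tan^2\tau))\bigr)$, which is \emph{not} small near that point. The paper's proof consists precisely in recognizing that \eqref{I0 integral def3} is the $x_{\pm}=\tan^2\tau$ instance of \eqref{I integral hypergeom2}, feeding it through Lemmas \ref{lemma on small I} and \ref{lemma I main estimate}, and computing that the resonance window $|X-4\tan^2\tau|\ll XT^{-1+\epsilon}$ contributes the full $T^{3/2}X^{-1/2+\epsilon}$; only the residual range $\tan^2\tau\ll X^{-1+\epsilon}$ is killed by the Gaussian argument you describe. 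So the $n=0$ term is a dominant contribution, not a negligible one, and your shortcut would ``prove'' a false stronger bound.

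A secondary, smaller issue: at the vanishing sign for $n=\pm1,\pm2$ you take the window $|\tau-\tau_0|\le X^{-\epsilon}$ and again claim the only oscillation is $X^{\pm ir}$. But $F(1-s+ir,1-s-ir,1;-x_{\pm})$ with $r\asymp T$ is only non-oscillatory when $r^2x_{\pm}\ll 1$, and on your window $x_{\pm}$ can be as large as $X^{-2\epsilon}$, so $r^2x_{\pm}$ can be as large as $X^{1-O(\epsilon)}$. The correct split (the one the paper uses, with $x_{\pm}$ in place of $\tan^2\tau$) is $x_{\pm}\gg X^{-1+\epsilon}$, handled by Lemma \ref{lemma on small I} pointwise in $\tau$ (no resonance occurs there since $x_{\pm}$ stays bounded near $\tau_0$), versus $x_{\pm}\ll X^{-1+\epsilon}$, where the Mellin--Barnes expansion replaces the hypergeometric by $1$ and the Gaussian argument applies. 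You flag the need to verify slow variation as ``the main obstacle'' but do not resolve it, and with the window as you chose it the claim fails.
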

\begin{proof}
Using standard estimates on the Dedekind zeta function $\zeta_{\kk}(s)$, we obtain that it is required  to prove
\begin{equation}\label{sum over exceotional n estimate2}
\int_0^{\pi/2}\sum_{n=0,\pm1,\pm2}
\left|\frac{1}{G}\int_T^{2T}I(n,\tau,s)dK\right|d\tau \ll \frac{T^{3/2}X^{\epsilon}}{X^{1/2}}.
\end{equation}
Let us first consider the case $n=0.$ In that case it follows from \eqref{I0 integral def} and \eqref{h star def2} that
\begin{multline}\label{I0 integral def2}
I(0,\tau,s)=
\frac{1}{2^4(\cos\tau)^{2-2s}}
\int_{-\infty}^{\infty}r^2h(r)\cosh(\pi r)
\Gamma(1-s+ir)\Gamma(1-s-ir)\\ \times
 F(1-s+ir,1-s-ir,1;-\tan^2\tau)dr.
\end{multline}
Using the transformation formula \cite[p.117, (34)]{BE} 
\begin{multline}\label{I0 integral def3}
I(0,\tau,s)=
\frac{1}{2^3(\sin\tau)^{2-2s}}
\int_{-\infty}^{\infty}r^2h(r)\cosh(\pi r)
\frac{\Gamma(-2ir)\Gamma(1-s+ir)}{\Gamma(s-ir)}\\ \times
F(1-s+ir,1-s-ir,1+2ir;-\tan^{-2}\tau)(\tan\tau)^{-2ir}dr.
\end{multline}
The right-hand side of \eqref{I0 integral def3} equals up to a constant to the right-hand side of \eqref{I integral hypergeom2} if we let $x_{\pm}=\tan^2\tau.$ Therefore, we can apply the results of  Lemmas \ref{lemma on small I}
and \ref{lemma I main estimate}.  According to Lemma \ref{lemma on small I}, the contribution of $\tan^2\tau\gg X^{1+\epsilon}$ and $X^{-1+\epsilon}\ll \tan^2\tau\ll X^{1-\epsilon}$ is bounded by $X^{-A}$ for any $A>0$.

To estimate the contribution of $X^{1-\epsilon}\ll\tan^2\tau\ll X^{1+\epsilon}$ we apply Lemma \ref{lemma I main estimate}, and making the change of variables $y=\tan\tau$,  obtain
\begin{multline}\label{estimate of n=0 contribution}
\int_0^{\pi/2}
\left|\frac{1}{G}\int_T^{2T}I(0,\tau,s)dK\right|d\tau \ll
\int_0^{X^{-1/2+\epsilon}}
\left|\frac{1}{G}\int_T^{2T}I(0,\tau,s)dK\right|d\tau+\\
\int_{|X-4\tan^2\tau|\ll XT^{-1+\epsilon}}
\frac{T^{5/2}d\tau}{\sin\tau}+X^{-A}\\+
\int_{XT^{-1+\epsilon}\ll|X-4\tan^2\tau|\ll XG^{-1}\log T}
\frac{T^{3/2}Xd\tau}{|X-4\tan^2\tau|\sin\tau}\ll\\
\int_0^{X^{-1/2+\epsilon}}\left|\frac{1}{G}\int_T^{2T}I(0,\tau,s)dK\right|d\tau+
\int_{|X-4y^2|\ll XT^{-1+\epsilon}}
\frac{T^{5/2}dy}{X}+X^{-A}\\+
\int_{XT^{-1+\epsilon}\ll|X-4y^2|\ll XG^{-1}\log T}
\frac{T^{3/2}dy}{|X-4y^2|}\\\ll
\int_0^{X^{-1/2+\epsilon}}\left|\frac{1}{G}\int_T^{2T}I(0,\tau,s)dK\right|d\tau+
\frac{T^{3/2}X^{\epsilon}}{X^{1/2}}.
\end{multline}
To estimate the remaining integral we use \eqref{I0 integral def2}. For the hypergeometric function in
\eqref{I0 integral def2} we apply the Mellin-Barnes integral representation \cite[15.6.1]{HMF}, getting
\begin{multline}\label{2F1 MellinBarnes5}
\Gamma(1-s+ir)\Gamma(1-s-ir) F(1-s+ir,1-s-ir,1;-\tan^2\tau)=\\
\frac{1}{2\pi i}\int_{(-1/4)}
\frac{\Gamma(1-s+ir+z)\Gamma(1-s-ir+z)\Gamma(-z)}{\Gamma(1+z)}(\tan\tau)^{-2z}dz.
\end{multline}
Moving the line of integration to  $\Re(z)=a+1/2$ with $a$ being a positive integer, we obtain
\begin{multline}\label{2F1 MellinBarnes3}
\Gamma(1-s+ir)\Gamma(1-s-ir)F(1-s+ir,1-s-ir,1;-\tan^2\tau)=\\
\sum_{j=0}^{a}\frac{(-1)^j}{j!}
\frac{\Gamma(1-s+ir+j)\Gamma(1-s-ir+j)}{\Gamma(1+j)}(\tan\tau)^{-2j}+\\
\frac{1}{2\pi i}\int_{-\infty}^{\infty}
\frac{\Gamma(a+1+i(r-t+u))\Gamma(a+1+i(u-t-r))\Gamma(-a-1/2-iu)}{\Gamma(a+3/2+iu)(\tan\tau)^{2a+1+2iu}}du.
\end{multline}
Contribution of all residue terms from \eqref{2F1 MellinBarnes3} to \eqref{I0 integral def2} can be treated in the same way. Since $r/\tan\tau\ll X^{-1/2+\epsilon}$, the summand with $j=0$ is the largest. To estimate the integral in
\eqref{2F1 MellinBarnes3}  we use the Stirling formula \eqref{Stirling2}.
It shows that the interval $-r+t+1<u<r+t-1$ gives the largest contribution to the integral (we assume for simplicity that $r>0$ since the left-hand side of \eqref{2F1 MellinBarnes5} is an even function in $r$), which is bounded by
\begin{multline}\label{2F1 MellinBarnes4}
(\tan\tau)^{-2a-1}
\exp(-\pi r)\int_{-r+t+1}^{r+t-1}
\frac{|r-t+u|^{a+1/2}|r+t-u|^{a+1/2}}{|u|^{2a+2}}du\ll\\
\exp(\pi r)(r/\tan\tau)^{2a+1}.
\end{multline}
 By choosing sufficiently large integer $a$, we prove that this term is negligible because $r/\tan\tau\ll X^{-1/2+\epsilon}$.
Therefore, in order to estimate $I(0,\tau,s)$  for $\tan^2\tau\ll X^{-1+\epsilon}$ it is sufficient to consider \eqref{I0 integral def2} with the hypergeometric function being replaced by one. Hence we need to estimate
\begin{equation}\label{I0 n=0 estimate}
\frac{1}{2^4(\cos\tau)^{2-2s}}
\int_{-\infty}^{\infty}r^2h(r)\cosh(\pi r)
\Gamma(1-s+ir)\Gamma(1-s-ir)dr.
\end{equation}
Using the Stirling formula \eqref{Stirling2} and arguing in the same way as in \eqref{Taylor1}-\eqref{Taylor3}, we have
\begin{equation}\label{Taylor5}
\cosh(\pi r)\Gamma(1-s+ir)\Gamma(1-s-ir)=\pi K^{-2it}(1+\ldots).
\end{equation}
Substituting \eqref{Taylor5} to \eqref{I0 n=0 estimate}, we obtain the main term
\begin{equation}\label{I0 n=0 estimate2}
\frac{\pi K^{-2it}}{2^4(\cos\tau)^{2-2s}}
\int_{-\infty}^{\infty}r^2h(r)dr.
\end{equation}
Using \eqref{hN def}, making the change of variable $r=K+Gu$  and applying \eqref{exp integral1}-\eqref{exp integral2}
we show that the main term \eqref{I0 n=0 estimate2} is negligible. Consequently, we prove that
\begin{equation}\label{estimate of n=0 contribution2}
\int_0^{\pi/2}
\left|\frac{1}{G}\int_T^{2T}I(0,\tau,s)dK\right|d\tau\ll
\frac{T^{3/2}X^{\epsilon}}{X^{1/2}}.
\end{equation}
Let us consider the case $n=\pm2.$ It follows from \eqref{x def} and \eqref{I integral hypergeom1} that
\begin{multline}\label{I n=2}
\sum_{n=\pm2}I(n,\tau,s)=
\frac{1}{2}\int_{-\infty}^{\infty}r^2h(r)\cosh(\pi r)\sum_{\pm}
\frac{\Gamma(1-s-ir)\Gamma(1-s+ir)}{4(\cos\tau)^{2-2s}}\\ \times
F\left(1-s+ir,1-s-ir,1;-\frac{(1\pm\sin\tau)^2}{\cos^2\tau}\right)dr.
\end{multline}
Both cases ("+" and "-") of \eqref{I n=2} can be treated in the same way as the case $n=0$. In fact, the only difference between \eqref{I0 integral def2} and \eqref{I n=2} is that $\tan \tau$ in the argument of the hypergeometric function is replaced by $(1\pm\sin\tau)^2/\cos^2\tau.$
The case of $n=\pm1$ can also be treated similarly.

\end{proof}

\bigskip

\noindent\textit{Acknowledgments.}
We thank Gergely Harcos and Han Wu for bringing the paper \cite{Wu} to our attention, and Giacomo Cherubini for pointing out the correct value of the subconvexity exponent.



\nocite{}

\end{document}